\documentclass[11pt,reqno]{amsart}

\usepackage[T1]{fontenc}
\usepackage[utf8]{inputenc}
\usepackage{amsmath,amssymb,amsthm,mathrsfs,bm,empheq}
\usepackage{booktabs,graphicx}
\usepackage{xcolor}
\usepackage{enumitem}
\usepackage{geometry}
\usepackage{hyperref}
\hypersetup{hidelinks}
\allowdisplaybreaks[4]

\theoremstyle{plain}
\newtheorem{lemma}{Lemma}[section]
\newtheorem{theorem}[lemma]{Theorem}
\newtheorem{proposition}[lemma]{Proposition}

\newtheorem{assumption}{Assumption}[section]
\newtheorem{example}[lemma]{Example}
\theoremstyle{remark}
\newtheorem{remark}[lemma]{Remark}

\newcommand{\cP}{\mathcal P}
\newcommand{\cH}{\mathcal H}
\newcommand{\cL}{\mathcal L}
\newcommand{\cF}{\mathcal F}
\newcommand{\cU}{\mathcal U}
\newcommand{\bbR}{\mathbb R}
\newcommand{\bbS}{\mathbb S}
\newcommand{\gradG}{\nabla_G}
\newcommand{\divG}{\operatorname{div}_G}
\newcommand{\DeltaG}{\Delta_G}
\newcommand{\ip}[2]{(#1,#2)}
\newcommand{\norm}[1]{\lVert#1\rVert}
\makeatletter
\@namedef{subjclassname@2020}{%
  \textup{2020} Mathematics Subject Classification}
\makeatother

\begin{document}

\title[A staggered scheme for graph MFG]{Convergence and variational structure of a staggered scheme for mean field games with individual noise on graphs
} 
\author{Jianbo Cui, Tonghe Dang}
\address{Department of Applied Mathematics, The Hong Kong Polytechnic
University, Hung Hom, Kowloon, Hong Kong, SAR, China}
\email{jianbo.cui@polyu.edu.hk; tonghe.dang@polyu.edu.hk(Corresponding author)}
\thanks{This work is supported by MOST National Key R\&D Program No. 2024FA1015900, the Hong Kong Research Grant Council GRF grant 15302823, GRF grant 15301025, NSFC/RGC Joint Research Scheme N$\_$PolyU5141/24, NSFC grant 12522119, NSFC grant 12301526, internal funds (P0041274, P0045336)  from Hong Kong Polytechnic University,  and the
CAS AMSS-PolyU Joint Laboratory of Applied Mathematics.} 

\begin{abstract}
We propose and analyze a time-staggered numerical scheme for mean field game (MFG) systems with individual noise on finite graphs.  Numerically solving such coupled forward--backward systems is delicate because the density evolves in the open probability simplex and the
coefficients may degenerate at its boundary.   The scheme
preserves mass and satisfies a discrete fundamental identity compatible with the Lasry--Lions monotonicity argument, leading to uniqueness of the numerical solution.  By establishing a timestep-uniform positive lower
bound for the density and uniform bounds for the value variable, we 
prove first-order convergence for every interior discrete solution. 
For potential MFGs, we establish a variational characterization by
identifying the scheme with the KKT system of a convex discrete action, 
yielding existence of the discrete solution and an optimization-based
realization.  The resulting optimization problem is solved by a feasible primal--dual Newton method in mass-preserving coordinates. Numerical experiments
confirm the predicted convergence rate and illustrate
topology-dependent transport and congestion-driven route
choice. 

\end{abstract}

\keywords{Mean field games, finite graphs,
time-staggered scheme, structure-preserving discretization,  first-order convergence, variational formulation}
\subjclass[2020]{35Q89, 49N80, 65L20,  65K10, 35R02}
\maketitle

\section{Introduction}

Mean field games (MFGs) describe Nash equilibria of differential games with a
large population of weakly interacting agents
\cite{LasryLions2007,HuangMalhameCaines2006}. A representative agent optimizes
its own objective against a prescribed population distribution, while the
equilibrium condition requires that the induced population distribution agree
with the prescribed one. This leads to a coupled forward--backward system
consisting of a Hamilton--Jacobi equation for the value function and a
Fokker--Planck equation for the population density
\cite{CardaliaguetPorretta2020,CarmonaDelarue2018I,
BensoussanFrehseYam2013}. Finite-state MFGs arise
when the admissible states are discrete and the transitions between them are
determined by a graph. Such models naturally describe decision processes on
transportation networks, communication networks, regime-switching systems, and
other problems with discrete state spaces
\cite{GomesMohrSouza2010,GomesMohrSouza2013,Gueant2015,CecchinFischer2020,CarmonaWang2021,BonnansLavignePfeiffer2023}.

In this paper, we consider the following MFG system \cite{GangboMunozWuZhang2026} with individual noise posed on a finite
connected weighted graph $G=(V,E,\omega)$: 
\begin{align}
\begin{cases}\label{mfg}
\dot\phi=\bm H(\rho,\gradG\phi)-\DeltaG\phi,\\
\dot\rho=\divG\bm B(\rho,\gradG\phi)+\DeltaG\rho,\\
\rho(0)=\rho_0,\qquad
\phi(T)=g(\rho(T)).
\end{cases}
\end{align}
Here \(V\), \(E\), and \(\omega\) denote the vertex set, edge set, and edge weights, respectively. The variable $\rho$ is a probability vector on the graph vertices, $\phi$ is the
value variable, $\nabla_G$ is the graph gradient operator, and the graph Laplacian $\Delta_G$ models the independent Markov jumps
of individual agents. The coefficients $\bm H$ and $\bm B$ denote,
respectively, the Hamiltonian and the transport flux; see Section \ref{sec2} for precise definitions. 
For potential MFGs, \(\bm H\), \(\bm B\), and \(g\) are generated by a scalar Hamiltonian, a running potential, and a terminal potential.
The model \eqref{mfg} describes MFGs driven simultaneously by controlled transport
and independent Markov jumps of individual agents, and can be viewed as
the finite-state counterpart of second-order MFGs with diffusion on
Euclidean domains.

Unlike Euclidean MFGs, where agents move in a continuous state space, the graph itself is the state space of the dynamics.  It therefore induces a finite-dimensional geometric structure on the probability simplex; see Section~\ref{sec2-compare} for a comparison of the two settings.
  In particular, 
the logarithmic mean
endows the probability simplex of a finite Markov chain with a
Wasserstein-type geometry, under which the graph heat equation becomes the
gradient flow of the entropy
\cite{ChowHuangLiZhou2012,Maas2011,ChowLiZhou2018,ChowDieciLiZhou2019}. This geometric framework has led to the
development of graph Wasserstein geodesics
\cite{GangboLiMou2019,ErbarMaas2012,Mielke2011,Mielke2013,CuiDieciZhouSISC2022,ErbarRumpfSchmitzerSimon2020}, Hamiltonian flows
\cite{ChowLiZhou2019,ChowLiZhou2020,CuiDieciZhou2022,CuiLiuZhou2023}, and
Hamilton--Jacobi equations on the probability simplex
\cite{GangboMouSwiech2024,CuiDangMou2026,CuiDang2025}. More recently, the well-posedness of the
graph MFG \eqref{mfg}, together with the associated master equation
and mean field control problem, has been established
\cite{GangboMunozWuZhang2026}. These developments naturally raise the
question of developing numerical
methods that preserve the intrinsic mathematical structures of graph
MFGs, including the monotonicity mechanism and probability-simplex structure.

The numerical analysis of MFGs on Euclidean domains has been extensively
developed over the past decades. For instance, finite-difference discretizations for second-order MFGs were
introduced in \cite{AchdouCapuzzoDolcetta2010}, and their convergence was
established in \cite{AchdouCamilliCapuzzoDolcetta2013}. For planning problems, variational formulations together with Newton-type solvers were developed in \cite{AchdouCamilliCapuzzoDolcetta2012}. 
Further optimization and computational methods
include proximal, generalized conditional-gradient, and primal-dual approaches
for continuous-state MFGs 
\cite{BricenoAriasKaliseSilva2018,LavignePfeiffer2023,
LiuJacobsLiNurbekyanOsher2021}. 
Recent developments  also  include finite element methods for
time-dependent MFGs with nondifferentiable Hamiltonians \cite{OsborneSmears2025}
and particle methods for first-order MFGs under displacement monotonicity
\cite{MeszarosOsborne2026}. 
In contrast, numerical analysis for finite-state graph MFGs is substantially less developed than for Euclidean MFGs. For finite-state MFGs satisfying a monotonicity condition, a contractive artificial-time flow was constructed in \cite{GomesSaude2021}. More recently, an initial-value optimization formulation together with a neural-network implementation was proposed in \cite{FengXiangZhou2026}. To the best of our knowledge, however, no time discretization of the individual-noise graph MFG \eqref{mfg} has been analyzed. 
 In
particular, neither a timestep-uniform interior estimate nor a
first-order convergence result is currently available.

The difficulty in the numerical analysis of \eqref{mfg} stems from three coupled
features of the system.
First, the value equation evolves backward in time, whereas the density
equation evolves forward in time, and the two equations are nonlinearly
coupled. Consequently, neither variable can be computed independently by
a standard time-marching procedure. 
Second, the density evolves in the open probability simplex. For the
logarithmic-mean model, the mobility degenerates as the density
approaches the boundary of the simplex, and the nonlinear coefficients
lose their uniform regularity. Positivity at each time level
is therefore insufficient. Instead, a timestep-independent interior
estimate is needed to control the nonlinear coefficients uniformly and
to establish convergence. Third, the time levels in the two equations must be chosen consistently 
so that the discretization preserves the intrinsic analytical
structure of the continuous problem. In particular, the discrete scheme
should retain the forward--backward monotonicity mechanism underlying
the Lasry--Lions argument and, in the potential case, admit a 
variational characterization.
Reconciling these
requirements is the key to obtaining a stable, convergent, and
structure-preserving numerical method.

Motivated by these observations, we introduce a time-staggered
discretization in which the nonlinear terms in both equations are evaluated at $(\rho^{n+1},\nabla_G\phi^n)$. The placement is chosen so that the discrete forward--backward identity retains the Lasry--Lions monotonicity mechanism, which provides the key
ingredient for proving uniqueness of the numerical solution. 
For general monotone coefficients, we combine the discrete fundamental
identity with a graph maximum principle to derive timestep-uniform
interior estimates for the density together with uniform bounds for the
value variable and its graph gradient. These estimates yield first-order convergence for every interior discrete solution. 
For potential graph MFGs, we further construct a convex discrete action
whose KKT system is shown to be equivalent to the proposed scheme. 
This variational characterization additionally yields the existence of an interior discrete solution and 
provides an optimization-based numerical realization.  Finally, we develop a feasible primal--dual Newton method for solving the resulting optimization problem. Each accepted iterate preserves the discrete
continuity equation and keeps the density in the open probability
simplex. 
Numerical experiments exhibit first-order temporal convergence consistent with the predicted rate and illustrate topology-dependent transport and congestion-driven route selection.

The paper is organized as follows.  Section~\ref{sec2} introduces the graph operators,
the continuous MFG system, the logarithmic-mean quadratic model, and the comparison with
Euclidean MFGs.
Section~\ref{sec3} presents the time-staggered scheme and the main result, including the uniform regularity estimate, convergence order, and variational formulation. 
Section~\ref{sec4} proves uniqueness, uniform
interior estimates, and first-order convergence for the general MFG scheme.
Section~\ref{sec5} establishes the variational characterization and existence of the
discrete solution in the potential case.   Section~\ref{sec6} gives the feasible
Newton realization and the numerical experiments.  Additional proofs are
collected in the appendix.

\section{Preliminaries and MFGs on Graphs}\label{sec2}

This section introduces the notation and the continuous MFG system.
We also state the assumptions used in the analysis and then verify them for the
logarithmic-mean quadratic model in Example~\ref{exam}.   Finally, we compare the graph setting with MFGs on Euclidean domains. 

\subsection{Notation}
Let $G=(V,E,\omega)$ be a finite connected undirected graph without self-loops,
where $V=\{1,\ldots,d\}$ is the vertex set with $d\ge2$, $E\subset V\times V$ is the edge set,    and $\omega=(\omega_{ij})_{1\leq i,j\leq d}$ is a symmetric weight matrix satisfying $\omega_{ii}=0$, $\omega_{ij}=\omega_{ji}>0$ for $(i,j)\in E$, and $\omega_{ij}=0$ for $(i,j)\notin E$.   
Throughout the paper, the set $E$ contains both orientations of
each undirected edge.  We write $j\sim i$ if
$(i,j)\in E$, set
$\omega_{\min}=\min_{(i,j)\in E}\omega_{ij}$ and
$\omega_{\max}=\max_{(i,j)\in E}\omega_{ij}$, and denote by
$\bm1=(1,\ldots,1)^T\in\bbR^d$ the constant vector. 
For $x,y\in\bbR^d$, let
$\ip{x}{y}=\sum_{i=1}^d x_i y_i$,
$\norm{x}=\ip{x}{x}^{1/2}$, and
$\norm{x}_{\ell^\infty}=\max_{i=1,\ldots,d}|x_i|$. 
Let $\bbS^{d\times d}$ be the space of skew-symmetric matrices $p=(p_{i,j})_{1\leq i,j\leq d}$ satisfying
$p_{ji}=-p_{ij}$ and $p_{ij}=0$ when $(i,j)\notin E$.  It is equipped with
\begin{align}\label{infty-E}
 \ip{p}{q}_E=\frac12\sum_{(i,j)\in E}p_{ij}q_{ij},
 \qquad
 \norm{p}_E=\ip{p}{p}_E^{1/2},
 \qquad
 \norm{p}_{\ell^\infty(E)}=\max_{(i,j)\in E}|p_{ij}|, 
\end{align}
where $q\in\mathbb S^{d\times d}.$ 
For $u\in\bbR^d$, define the graph gradient
$(\gradG u)_{ij}:=\sqrt{\omega_{ij}}(u_i-u_j)$ for $(i,j)\in E$.
For $q\in\mathbb S^{d\times d},$ the graph divergence is defined by \begin{equation}\label{graph-divergence}
 (\divG q)_i=-\sum_{j\sim i}\sqrt{\omega_{ij}}q_{ij},
\end{equation} and satisfies the discrete integration-by-parts identity 
\begin{equation}\label{graph-ibp}
 \ip{\divG q}{u}=-\ip{q}{\gradG u}_E.
\end{equation} 
The graph Laplacian is $\DeltaG=\divG\gradG$ and thus 
\begin{equation}\label{graph-laplacian}
 (\DeltaG u)_i=\sum_{j\sim i}\omega_{ij}(u_j-u_i),
 \qquad
 \ip{\DeltaG u}{v}=-\ip{\gradG u}{\gradG v}_E.
\end{equation}
The probability simplex is
$\cP(G)=\{\xi\in[0,1]^d:\sum_{i=1}^d\xi_i=1\},$ and its
interior is $\cP^\circ(G)=\{\xi\in\cP(G):\xi_i>0\}$.
For $\epsilon\in(0,1/d)$, write
$\cP_\epsilon=\{\xi\in\cP(G):\xi_i\ge\epsilon\}$.
We use $\xi$ for a generic point of the simplex and $\rho$ for a density trajectory.  
The tangent space of the simplex is $\bbR_0^d:=\{\eta\in\bbR^d:\sum_{i=1}^d\eta_i=0\}$.
The symbols $D_\rho,D_p$ denote Fr\'echet derivatives in the
indicated variables, and   $D^2$ denotes the corresponding Hessian. For scalar functions defined on the simplex, density derivatives are
intrinsically covectors on the tangent space $\mathbb R_0^d$.
Whenever derivatives such as
$D_\rho\mathcal H$,
$D_\rho\mathcal L$,
$D_\rho\mathcal F$,
or
$D_\rho\mathcal U_T$
are represented by vectors in $\mathbb R^d$, we fix smooth extensions
to a neighborhood of the simplex. Different choices of extension differ only by an additive multiple of
$\mathbf1$, and therefore define the same linear functional on
$\mathbb R_0^d$.

\subsection{MFGs on graphs} 
Let $\bm H:\cP^\circ(G)\times\bbS^{d\times d}\to\bbR^d$ and
$\bm B:\cP^\circ(G)\times\bbS^{d\times d}\to\bbS^{d\times d}$ be the
Hamiltonian and transport coefficients, respectively, and let
$g:\cP(G)\to\bbR^d$ be a terminal functional.  Fix $T>0$.  Throughout this paper, we consider the graph MFG system
\eqref{mfg}, where $\rho_0\in\cP^\circ(G)$, 
and the coefficients satisfy the following regularity and coercivity conditions (see
\cite[(3.1)--(3.4)]{GangboMunozWuZhang2026}).
\begin{assumption}
\label{continuous-data}
Let $\bm H\in\mathcal C^1(\cP^\circ(G)\times\bbS^{d\times d};\bbR^d)$, $\bm B\in\mathcal C^1(\cP^\circ(G)\times\bbS^{d\times d};\bbS^{d\times d})$,   
and $g\in\mathcal C^1(\cP(G);\bbR^d)$.  There are constants $C_0,C_H,C_g\ge0$ such that
\begin{equation}\label{data-bounds}
 \ip{\bm B(\rho,p)}{p}_E
 \ge\ip{\bm H(\rho,p)}{\rho}-C_0,
 \qquad
 \bm H_i(\rho,p)\ge-C_H,
 \qquad
 \norm{g(\rho)}_{\ell^\infty}\le C_g.
\end{equation}
Moreover, there is a locally bounded function
$\mathfrak a:(0,\infty)\times\bbS^{d\times d}\to[0,\infty)$ such that
$|\bm B_{ij}(\rho,p)|\le(\rho_i+\rho_j)
\mathfrak a(\rho_j/\rho_i,p)$ for $(i,j)\in E$,
and $\mathfrak a(u,p)\to0$ as $u\to0^+$ or $u\to\infty$, locally uniformly in
$p\in\bbS^{d\times d}$.
\end{assumption} 

For $R>0$, set 
$h_R(u):=\sup_{\norm{p}_{\ell^\infty(E)}\le R}\mathfrak a(u,p)$, 
where $\|\cdot\|_{\ell^{\infty}(E)}$ is given in \eqref{infty-E}.  
Assumption~\ref{continuous-data} implies that $h_R$ is bounded, tends to zero
as $u\to0^+$ or $u\to\infty$, and satisfies
\begin{equation}\label{bounded-mobility}
 |\bm B_{ij}(\rho,p)|
 \le(\rho_i+\rho_j)h_R(\rho_j/\rho_i)
 \quad\text{whenever }\norm{p}_{\ell^\infty(E)}\le R.
\end{equation} 
Indeed, according to Assumption~\ref{continuous-data}, the convergence
of $\mathfrak a(u,p)$ to zero as $u\to0^+$ or $u\to\infty$ is uniform
for $\norm{p}_{\ell^\infty(E)}\le R$. Hence there exist
$u_->0$ and $u_+>u_-$ such that $ h_R(u)\le1$ 
whenever $ 0<u\le u_-
\text{ or }
u\ge u_+.$  On the remaining compact interval $[u_-,u_+]$, local boundedness of $\mathfrak a$
gives a uniform bound on
$[u_-,u_+]\times
\{p:\norm{p}_{\ell^\infty(E)}\le R\}.$  Therefore, $ 
\sup_{u>0}h_R(u)<\infty.$  
Moreover, \eqref{bounded-mobility} and property of $h_R$ implies that for every $(i,j)\in E$, the component $\bm B_{ij}$ can be continuously
extended to boundary points satisfying $\rho_i\rho_j=0$ by setting \begin{equation}\label{boundary-B_revise}
 \bm B_{ij}(\rho,p)=0
 \quad\text{whenever }\;\rho_i\rho_j=0.
\end{equation}
This will be used in the positivity estimate for the numerical
density. 

Assumption \ref{continuous-data} ensures the existence of a solution to the MFG \eqref{mfg}. For uniqueness, \cite{GangboMunozWuZhang2026} introduces the terminal and Lasry--Lions monotonicity conditions, which are recorded as follows.  
\begin{assumption}\label{assume-monotone}
The terminal map $g$ is monotone, namely, for all
$\rho,\sigma\in\cP^\circ(G)$,
\begin{equation}\label{terminal-mono}
 \ip{g(\rho)-g(\sigma)}{\rho-\sigma}\ge0.
\end{equation}
Moreover, the differential Lasry--Lions condition holds: for
$(\rho,p)\in\cP^\circ(G)\times\bbS^{d\times d}$ and
$(\eta,r)\in\bbR_0^d\times \bbS^{d\times d}$, 
\begin{equation}\label{differential-LL}
 \mathcal M_{\rho,p}(\eta,r)<0
 \qquad\text{whenever }(\eta,r)\ne(0,0),
\end{equation} where
\[
\begin{aligned}
 \mathcal M_{\rho,p}(\eta,r)
 :={}&\ip{D_\rho\bm H(\rho,p)[\eta]+D_p\bm H(\rho,p)[r]}{\eta}-\ip{D_\rho\bm B(\rho,p)[\eta]+D_p\bm B(\rho,p)[r]}{r}_E.\end{aligned}
\] 
\end{assumption}
\begin{remark}\label{remark1}
Condition
\eqref{differential-LL} implies that for
$(\rho,p),(\sigma,q)\in\cP^\circ(G)\times \bbS^{d\times d}$ with
$(\rho,p)\ne(\sigma,q)$,
\begin{equation}\label{LL}
 \ip{\bm H(\rho,p)-\bm H(\sigma,q)}{\rho-\sigma}
 -\ip{\bm B(\rho,p)-\bm B(\sigma,q)}{p-q}_E<0.
\end{equation}  
Moreover, condition \eqref{differential-LL} yields the following
quantitative form: for every compact
$K\Subset\cP^\circ(G)\times \bbS^{d\times d}$ there is $c_K>0$ such that
\begin{equation}\label{strong-LL-ineq}
\begin{aligned}
 &\ip{\bm H(\rho,p)-\bm H(\sigma,q)}{\rho-\sigma}
 -\ip{\bm B(\rho,p)-\bm B(\sigma,q)}{p-q}_E\le-c_K\bigl(
 \norm{\rho-\sigma}^2+\norm{p-q}_E^2\bigr)
\end{aligned}
\end{equation}
for all $(\rho,p),(\sigma,q)\in K$, 
whose proof is given in the appendix. 
The terminal condition \eqref{terminal-mono} together with \eqref{LL}
is sufficient for uniqueness of the solution of \eqref{mfg}.  The stronger differential condition
\eqref{differential-LL} is used below through \eqref{strong-LL-ineq} in the
stability and error analysis of numerical schemes.
\end{remark}

The following proposition gives the well-posedness and regularity estimates for the exact solution of the MFG \eqref{mfg}. The existence, uniqueness, and positivity assertions follow from
\cite[Theorem~1.2, Proposition~3.3, and Lemma~3.4]{GangboMunozWuZhang2026}. The temporal $\mathcal C^2$ regularity is obtained by differentiating
\eqref{mfg} along the exact trajectory, which remains in a compact
subset of the open simplex. The details are given in the appendix. 
\begin{proposition}
\label{cont-sol}
Let $\rho_0\in\cP_\epsilon$ for some $\epsilon\in(0,1/d)$ and let
Assumptions~\ref{continuous-data} and \ref{assume-monotone} hold.  Then \eqref{mfg} has a unique solution
$(\phi,\rho)\in\mathcal C^2([0,T];\bbR^d\times\cP^\circ(G))$. Moreover, there are
$\delta_*,C_*>0$, depending on $G,T,\epsilon$ and the problem data such that
\begin{equation}\label{cont-reg}
 \min_{0\le t\le T}\min_i\rho_i(t)\ge\delta_*,
 \qquad
 \norm{\phi}_{\mathcal C^2([0,T])}
 +\norm{\rho}_{\mathcal C^2([0,T])}\le C_*.
\end{equation}
\end{proposition}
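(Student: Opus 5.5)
We take existence, uniqueness and interior positivity from \cite[Theorem~1.2, Proposition~3.3, Lemma~3.4]{GangboMunozWuZhang2026}, so the work is to fix the dependence of the constants and to upgrade regularity to $\mathcal C^2$. We proceed in four steps.

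\emph{Step 1 (cited results).} Under Assumption~\ref{continuous-data}, the cited existence result gives a classical solution $(\phi,\rho)\in\mathcal C^1([0,T];\bbR^d\times\cP^\circ(G))$. Under Assumption~\ref{assume-monotone}, uniqueness follows from the Lasry--Lions argument using \eqref{terminal-mono} and \eqref{LL}. The positivity lemma gives $\min_{t,i}\rho_i(t)\ge\delta_*$, with $\delta_*$ depending only on $G$, $T$, $\epsilon$ and the data. If the cited statement is only qualitative, we would reproduce its argument.

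\emph{Step 2 (bounds on $\phi$ and $\gradG\phi$).} We bound $\phi$ via a maximum principle for the backward equation. Since $\bm H_i\ge -C_H$ and $\norm{g}_{\ell^\infty}\le C_g$, comparing with $C_g+C_H(T-t)$ bounds $\phi$ from one side. The other side uses the energy identity. Pair the $\phi$-equation with $\rho$ and the $\rho$-equation with $\phi$ and add; the Laplacian terms cancel by \eqref{graph-laplacian}, and \eqref{graph-ibp} gives
\begin{align*}
\frac{d}{dt}(\phi,\rho)=(\bm H,\rho)-(\bm B,\gradG\phi)_E\le C_0 .
\end{align*}
Combined with the graph maximum principle, as in the cited paper, this yields $\norm{\phi}_{\ell^\infty}\le C$ uniformly on $[0,T]$. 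Hence $\norm{\gradG\phi}_{\ell^\infty(E)}\le R:=2\sqrt{\omega_{\max}}\,C$.

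\emph{Step 3 (compactness of the trajectory).} By Steps 1--2 the trajectory $(\rho(t),\gradG\phi(t))$ lies in the compact set
\begin{align*}
K=\cP_{\delta_*}\times\{p:\norm{p}_{\ell^\infty(E)}\le R\}\Subset\cP^\circ(G)\times\bbS^{d\times d}.
\end{align*}
On $K$ the functions $\bm H$, $\bm B$ and their first derivatives are bounded by constants depending only on $K$. The equations then give $\norm{\dot\phi}_{\ell^\infty}+\norm{\dot\rho}_{\ell^\infty}\le C$, and so the $\mathcal C^1$ bound follows.

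\emph{Step 4 ($\mathcal C^2$ regularity).} The right-hand sides of \eqref{mfg} are $\mathcal C^1$ functions composed with the $\mathcal C^1$ path $(\rho,\gradG\phi)$. Therefore $\dot\phi$ and $\dot\rho$ are $\mathcal C^1$, with
\begin{align*}
\ddot\phi=D_\rho\bm H[\dot\rho]+D_p\bm H[\gradG\dot\phi]-\DeltaG\dot\phi .
\end{align*}
The analogous formula holds for $\ddot\rho$, with $\divG$ applied to the derivative of $\bm B$. These expressions are bounded by $\sup_K|D\bm H|+\sup_K|D\bm B|$ times the $\mathcal C^1$ bound from Step 3, which gives \eqref{cont-reg}.

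\emph{Main obstacle.} The main difficulty is the timestep-independent, data-dependent lower bound $\delta_*$ in Step 1. It depends on the a priori bound $R$ through $h_R$ in \eqref{bounded-mobility}. The Laplacian pushes mass inward, while the flux $\bm B_{ij}$ vanishes near the boundary in the sense of \eqref{boundary-B_revise}. We would therefore study the minimal component $m(t)=\min_i\rho_i(t)$ at a vertex $i$ attaining the minimum. The term $(\DeltaG\rho)_i\ge\omega_{\min}(\rho_j-\rho_i)$ dominates the flux contribution $\sum_j(\rho_i+\rho_j)h_R(\rho_j/\rho_i)$ whenever the ratio $\rho_j/\rho_i$ is large, since $h_R\to0$ as $u\to\infty$. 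When instead the ratios to the neighbors are bounded, we would propagate outward along paths of the connected graph. This yields a differential inequality $\dot m\ge -C m$ and hence $m(t)\ge\epsilon e^{-CT}$; we would rely on the cited Lemma~3.4 for this argument.
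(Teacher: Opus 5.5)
Your proposal is correct and follows essentially the same route as the paper's appendix proof. Existence, uniqueness, the $\ell^\infty$ bound on $\phi$ and the lower bound $\delta_*$ are taken from \cite{GangboMunozWuZhang2026}, the trajectory is placed in the compact set $\cP_{\delta_*}\times\{p:\norm{p}_{\ell^\infty(E)}\le 2\sqrt{\omega_{\max}}C_\phi\}$, and \eqref{mfg} is differentiated in time to bound $\ddot\phi$ and $\ddot\rho$.

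Two remarks on your extra sketches. First, the lower bound on $\phi$ should be obtained at $t=0$ from $\rho_0\in\cP_\epsilon$ and then propagated via $\frac{d}{dt}\sum_i\phi_i=\sum_i\bm H_i\ge-dC_H$, as in Lemma~\ref{val-grad}; it should not be obtained via $\delta_*$, which itself depends on $R$. Second, your positivity remark needs no path propagation, because bounded ratios already give an edge flux $\le Cm$ at the minimizing vertex, as in Lemma~\ref{q-interior}.
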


Next, we introduce the potential subclass of \eqref{mfg}, for which the numerical scheme introduced in Section \ref{sec3} admits a variational formulation.
 Let $\cH:\cP^\circ(G)\times\bbS^{d\times d}\to\bbR$ be a Hamiltonian,
$\cF:\cP(G)\to\bbR$ a population potential, and
$\cU_T:\cP(G)\to\bbR$ a terminal potential.  The MFG is potential if
\begin{equation}\label{pot-coeff}
 \bm H(\rho,p)=D_\rho\cH(\rho,-p)+D_\rho\cF(\rho),
 \qquad
 \bm B(\rho,p)=-D_q\cH(\rho,-p),
 \qquad
 g=D_\rho\cU_T.
\end{equation}
In this case, 
the system also has a Wasserstein--Hamiltonian formulation; see \cite{AmbrosioGangbo2008,ChowLiZhou2020,CuiDieciZhou2022}.  Set
$S=-\phi$ and define
\begin{equation}\label{canonical-H}
 \mathscr K(\rho,S)
 =\cH(\rho,\gradG S)+\cF(\rho)
 -\ip{\gradG\rho}{\gradG S}_E.
\end{equation}
Then the potential MFG satisfies
$\dot\rho=D_S\mathscr K(\rho,S)$ and
$\dot S=-D_\rho\mathscr K(\rho,S)$.

\begin{assumption}
\label{pot-struct}
The Hamiltonian $\cH
\in\mathcal C^2(\cP^\circ(G)\times\bbS^{d\times d})$, and 
the functions $\cF,\cU_T\in\mathcal C^2(\cP(G))$. 
Moreover, $-\cF$ is strictly
convex and $\cU_T$ is convex on $\cP(G)$.   
For
$(\rho,q)\in\cP^\circ(G)\times\bbS^{d\times d}$,
$\eta\in\bbR_0^d$, and $r\in\bbS^{d\times d}$,
$\ip{D_{\rho\rho}^2\cH(\rho,q)\eta}{\eta}\le0$,
$\ip{D_{qq}^2\cH(\rho,q)r}{r}_E>0$ if $r\ne0$, and
$\ip{D^2\cF(\rho)\eta}{\eta}<0$ if $\eta\ne0$.
\end{assumption}

\begin{remark}\label{potential-monotonicity}
Assumption~\ref{pot-struct} implies
Assumption~\ref{assume-monotone}.  This follows from \eqref{pot-coeff}, the convexity
of $\cU_T$, the strict concavity of $\cF$,  and the concavity of $\mathcal H$ in $\rho$ variable together with its strict convexity in the edge variable. The verification is given in the appendix.  Hence
Proposition~\ref{cont-sol} applies to the potential system as a special case, under Assumptions \ref{continuous-data} and \ref{pot-struct}. 
\end{remark}  

We next present a concrete example of the potential MFG with a quadratic Hamiltonian and a quadratic potential; see \cite[(1.13)]{GangboMunozWuZhang2026}.    
\begin{example}\label{exam}
For $r,s>0$, let
$\theta(r,s)=(r-s)/(\log r-\log s)$ if $r\neq s$,  $\theta(r,r)=r$, and extend it by
$\theta(0,s)=\theta(s,0)=0$.  Write
$\theta_{ij}(\rho):=\theta(\rho_i,\rho_j)$ and define edgewise multiplication
by
$[\theta(\rho)q]_{ij}:=\theta_{ij}(\rho)q_{ij}$.  Define
\begin{equation}\label{quad-H}
 \cH_\theta(\rho,q)=\frac14\sum_{(i,j)\in E}
 \theta(\rho_i,\rho_j)q_{ij}^2.
\end{equation}

Choose a convex terminal potential $\cU_T\in\mathcal C^2(\cP(G))$ and let 
\begin{equation}\label{quad-data}
 \cF(\rho)=-\frac{\alpha}{2}\norm{\rho}^2,
 \qquad \alpha>0.
\end{equation}
Then the corresponding potential MFG is
\begin{equation}\label{quad-mfg}
\begin{cases}
\displaystyle
 \dot\phi_i=
 \frac12\sum_{j\sim i}\partial_1\theta(\rho_i,\rho_j)
 \bigl((\gradG\phi)_{ij}\bigr)^2
 -\alpha\rho_i-(\DeltaG\phi)_i,\\[3mm]
\displaystyle
 \dot\rho=\divG\bigl(\theta(\rho)\gradG\phi\bigr)+\DeltaG\rho, 
\end{cases}
\end{equation}
where $\partial_1\theta(r,s)$ denotes the derivative of
$\theta$ with respect to its first argument.  \eqref{quad-H}
and \eqref{quad-data} satisfy Assumptions~\ref{continuous-data} and
\ref{pot-struct}.  Hence, for $\rho_0\in\cP_\epsilon$,
Remark~\ref{potential-monotonicity} and Proposition~\ref{cont-sol} show
that \eqref{quad-mfg}, with $\rho(0)=\rho_0$ and
$\phi(T)=D_\rho\cU_T(\rho(T))$, has a unique solution satisfying
\eqref{cont-reg}.  The verification is given in the appendix. 
\end{example} 

\subsection{Comparison with Euclidean MFGs}
\label{sec2-compare} 
For comparison, consider the following general second-order MFG posed on the $r$-dimensional periodic Euclidean domain
$\mathbb T^r$: \[
\begin{cases}
 \partial_tu=\bm H_{\mathrm E}(m,\nabla_xu)-\nu\Delta_xu,\\
 \partial_tm=\operatorname{div}_x\bm B_{\mathrm E}(m,\nabla_xu)
 +\nu\Delta_xm,\\
 m(0,\cdot)=m_0,\qquad u(T,\cdot)=g_{\mathrm E}(m(T)).
\end{cases}
\]
Here, $r\ge 1$, $\Delta_x=\sum_{\ell=1}^r\partial_{x_\ell x_\ell}$ denotes the
Euclidean Laplacian, $\bm H_{\mathrm E}$ and $\bm B_{\mathrm E}$ denote the Hamiltonian and
the flux, respectively, and $g_{\mathrm E}$ is the terminal coupling.
The function $u$ is the value function, and $m$ is the nonnegative density
with unit mass.  This system has the same forward--backward structure as
\eqref{mfg}, with the spatial density $m$ replaced by the probability
vector $\rho$. Despite this similarity, the graph setting introduces
several distinctive analytical features, which are summarized below. 

First, the diffusion operators in the Euclidean and graph MFG systems
arise from different stochastic dynamics. 
Let $X_t$ satisfy
$\mathrm dX_t=\sqrt{2\nu}\,\mathrm dW_t$, where $W_t$ is an $r$-dimensional Brownian motion on $\mathbb T^r$, and let $Y_t$ be a continuous-time
Markov chain on $V$ whose transition
rates along the graph edges are given by the prescribed edge weights
$\omega_{ij}$. Their infinitesimal generators are
characterized by the following two identities:  
\[
\begin{aligned}
 &\lim_{h\downarrow0}\frac1h
 \mathbb E[\psi(X_{t+h})-\psi(X_t)\mid X_t=x]
 =\nu\Delta_x\psi(x),\quad \psi\in\mathcal C^2(\mathbb T^r),\\
 &\lim_{h\downarrow0}\frac1h
 \mathbb E[v(Y_{t+h})-v(Y_t)\mid Y_t=i]
 =\sum_{j\sim i}\omega_{ij}(v_j-v_i)=(\DeltaG v)_i,\quad v:V\to\bbR.
\end{aligned}
\]
Thus the Euclidean Laplacian is the generator of Brownian motion,
whereas the graph Laplacian is the generator of a continuous-time Markov
chain.
Consequently, the graph is itself the state space of the controlled
dynamics rather than a discretization of an underlying Euclidean domain.
Its topology and edge weights are therefore intrinsic components of the
model, which determine the admissible transitions and their rates; see \cite{ChowHuangLiZhou2012,Maas2011,ErbarMaas2012,GaoLiuLi2024}. Moreover, the connectedness of the graph plays a crucial role in the subsequent analysis through the graph Poincar\'e inequality and the timestep-uniform positivity estimate established below.

Second, unlike geometric boundaries in Euclidean domains, 
the relevant boundary in graph MFGs is the boundary of the probability simplex
$\partial\mathcal P(G)$, which is reached when one or more vertex
masses vanish.
For the logarithmic-mean model,
\[
\rho_i=0
\quad\Longrightarrow\quad
\theta(\rho_i,\rho_j)=0
\]
for every $j\sim i$, so the mobility becomes degenerate as the solution
approaches $\partial\mathcal P(G)$.
Consequently, the coefficients and their derivatives are no longer
uniformly controlled near the simplex boundary.
Since the convergence analysis relies on the uniform regularity of the nonlinear coefficients, it becomes essential to establish the
timestep-independent estimate
\[
\min_{0\le n\le N}\min_i\rho_i^n\ge\delta_T>0.
\]
Its proof relies essentially on the graph connectivity and the edge
structure, and it provides the key ingredient for the subsequent
stability and convergence analysis.
The corresponding continuous theory also requires quantitative
positivity estimates to prevent finite-time degeneration near
$\partial\mathcal P(G)$
\cite{GangboMunozWuZhang2026}.

Finally, potential MFGs in both settings admit variational
formulations; see
\cite{AchdouCamilliCapuzzoDolcetta2012,
BricenoAriasKaliseSilva2018,
BonnansLavignePfeiffer2023,
LavignePfeiffer2023}. 
However, the variational formulations lead to different analytical issues.  For the graph MFG considered here, the effective domain of the discrete action degenerates at the boundary of the probability simplex because of the logarithmic mobility.  This simplex-boundary degeneracy plays a direct role in the variational analysis of the discrete problem.

These features lead to a numerical analysis that is distinct from its Euclidean counterpart. In particular, preserving the
simplex interior is essential for maintaining the uniform regularity of
the nonlinear coefficients, and therefore underlies the  uniqueness, stability, and convergence analysis of the discrete scheme.

\section{Numerical scheme and main results}\label{sec3}
In this section, we introduce a numerical scheme for the general MFG
\eqref{mfg} and state its uniqueness, timestep-uniform estimates, and
first-order convergence. We then specialize to potential MFGs and establish
the variational characterization of the scheme. This variational characterization yields existence in the potential case and leads to an optimization-based numerical realization.

Let $t_n=n\tau$, $\tau=T/N$ with $N\in\mathbb N_+$, and $\rho_0\in\cP^\circ(G)$.  We propose the following numerical
scheme for \eqref{mfg}:  $\rho^0=\rho_0,\;
 \phi^N=g(\rho^N)$, 
\begin{equation}\label{mfg-scheme}
\begin{cases}
 \frac{\phi^{n+1}-\phi^n}{\tau}
 =\bm H(\rho^{n+1},\gradG\phi^n)-\DeltaG\phi^n,\\
 \frac{\rho^{n+1}-\rho^n}{\tau}
 =\divG\bm B(\rho^{n+1},\gradG\phi^n)+\DeltaG\rho^{n+1}.
\end{cases}
\end{equation}
Summing the density equation in \eqref{mfg-scheme} over the vertices gives
mass preservation:
$\sum_{i=1}^d\rho_i^{n+1}=\sum_{i=1}^d\rho_i^n$.  
The scheme is called time-staggered because both equations use
$(\rho^{n+1},\gradG\phi^n)$, which is the graph analogue of staggered implicit discretizations used for Euclidean MFGs and planning problems \cite{AchdouCapuzzoDolcetta2010,AchdouCamilliCapuzzoDolcetta2012,
AchdouCamilliCapuzzoDolcetta2013}. 
This placement is useful for several reasons. First, the two graph
Laplacians will cancel exactly in the
discrete fundamental identity, and the Lasry--Lions mechanism needed for uniqueness and the error estimate can be retained. Second, the density equation is
conservative and implicit in $\rho^{n+1}$, which is compatible with the
time-step-independent positivity estimate proved below. Moreover,  
in the
potential case, the scheme is
the KKT system for the discretization of the continuous potential action.

The first main result is stated below for the general MFG scheme. Unless otherwise stated, all constants in Theorem~\ref{general-results} may depend on the graph
$G$, the problem data, $\epsilon$, and $T$, but are independent of the timestep
$\tau$ and the number of time steps $N$.  
\begin{theorem}
\label{general-results}
Let $\rho^0\in\cP_\epsilon$ for some $\epsilon\in(0,\frac1d),$ and let Assumptions~\ref{continuous-data} and \ref{assume-monotone} hold.  Then \eqref{mfg-scheme} has at most one interior
solution. Moreover, every interior solution satisfies the following.  
 There exist $\delta_T,C_T>0$, independent of $N$ and $\tau$, such that
\begin{equation}\label{uniform-bounds}
 \min_{0\le n\le N}\min_i\rho_i^n\ge\delta_T,
 \qquad
 \max_{0\le n\le N}\Big(
 \norm{\phi^n}_{\ell^\infty}
 +\norm{\gradG\phi^n}_{\ell^\infty(E)}\Big)\le C_T.
\end{equation}
There exist $\tau_0>0$ and $\widetilde C_T>0$, independent
of $N$ and $\tau$, such that, for $0<\tau\le\tau_0$,
\begin{equation}\label{first-rate}
\begin{aligned}
 \max_{0\le n\le N}
 \big(\norm{\rho(t_n)-\rho^n}
 +\norm{\phi(t_n)-\phi^n}\big)+\Big(
 \tau\sum_{n=0}^{N-1}
 \norm{\gradG\phi(t_n)-\gradG\phi^n}_E^2
 \Big)^{1/2}
 \le \widetilde C_T\tau, 
\end{aligned}
\end{equation}where $(\phi,\rho)$ is the exact solution from
Proposition~\ref{cont-sol}.  
\end{theorem}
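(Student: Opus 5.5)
The proof splits into three parts: uniqueness, the uniform bounds \eqref{uniform-bounds}, and the error estimate \eqref{first-rate}. All three rest on a discrete fundamental identity.

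\textbf{Discrete fundamental identity and uniqueness.} Take two interior solutions $(\phi^n,\rho^n)$ and $(\psi^n,\sigma^n)$ with the same $\rho^0$. Set $\hat\phi^n=\phi^n-\psi^n$ and $\hat\rho^n=\rho^n-\sigma^n$, so that $\hat\rho^0=0$. Form the summation-by-parts identity
\[
\ip{\hat\phi^N}{\hat\rho^N}-\ip{\hat\phi^0}{\hat\rho^0}=\sum_{n=0}^{N-1}\Big(\ip{\hat\phi^{n+1}-\hat\phi^n}{\hat\rho^{n+1}}+\ip{\hat\phi^n}{\hat\rho^{n+1}-\hat\rho^n}\Big).
\]
Insert \eqref{mfg-scheme} and use \eqref{graph-ibp} and \eqref{graph-laplacian}. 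Because both equations are evaluated at $(\rho^{n+1},\gradG\phi^n)$, the two Laplacian terms $-\ip{\DeltaG\hat\phi^n}{\hat\rho^{n+1}}$ and $\ip{\hat\phi^n}{\DeltaG\hat\rho^{n+1}}$ cancel exactly. What remains is $\tau$ times the Lasry--Lions expression in \eqref{LL}, evaluated at $(\rho^{n+1},\gradG\phi^n)$ versus $(\sigma^{n+1},\gradG\psi^n)$. The left side equals $\ip{g(\rho^N)-g(\sigma^N)}{\rho^N-\sigma^N}\ge0$ by \eqref{terminal-mono}. Each summand on the right is $\le0$, and it is strictly negative unless the pair coincides. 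Hence $\rho^{n+1}=\sigma^{n+1}$ and $\gradG\phi^n=\gradG\psi^n$ for all $n$. The $\phi$-equation then gives $\hat\phi^{n+1}=\hat\phi^n$, and since $\phi^N=g(\rho^N)=\psi^N$, we get $\hat\phi\equiv0$.

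\textbf{Uniform bounds.} The plan is to prove the bounds in the following order.

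First, apply the fundamental identity with a constant comparison (or simply test the equations with $\rho^{n+1}$ and $\phi^n$). Using \eqref{data-bounds}, this gives a discrete energy bound for $\tau\sum_n\ip{\bm H(\rho^{n+1},\gradG\phi^n)}{\rho^{n+1}}$ and for $\ip{\phi^0}{\rho^0}$.

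Second, bound $\phi^n$ from below. The scheme is $(I-\tau\DeltaG)\phi^n=\phi^{n+1}-\tau\bm H$, and $\bm H\ge -C_H$. The graph maximum principle for $I-\tau\DeltaG$ therefore gives $\min\phi^n\ge -C_g-TC_H$.

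Third, obtain an upper bound. Since $\rho^0\in\cP_\epsilon$, the energy identity yields a bound on the weighted average $\ip{\phi^0}{\rho^0}$. Propagating it requires a bound on the oscillation of $\phi^n$, which I would get from a Harnack/Poincaré-type argument on the connected graph, using the coercivity in \eqref{data-bounds}. This gives $\norm{\phi^n}_{\ell^\infty}\le C$, and hence $\norm{\gradG\phi^n}_{\ell^\infty(E)}\le 2\sqrt{\omega_{\max}}\,C=:R$.

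Fourth, with $R$ fixed, prove positivity of the density. At the vertex $i$ minimizing $\rho^{n+1}_i/\rho^{n+1}_j$ along paths (or simply the minimal vertex), \eqref{bounded-mobility} bounds the flux by $(\rho_i+\rho_j)h_R(\rho_j/\rho_i)$, while the Laplacian contributes $\sum_j\omega_{ij}(\rho_j-\rho_i)\ge0$. When the ratio $\rho_j/\rho_i$ is large, $h_R$ is small and the diffusion dominates. I expect this to give a one-step inequality
\[
\min\rho^{n+1}\ge(1+C\tau)^{-1}\min\rho^n,
\]
possibly after an argument by contradiction on ratios across neighbouring vertices using connectivity. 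Iterating yields $\delta_T\ge \epsilon e^{-CT}$. This step is the main obstacle. The difficulty is to make the bound $\tau$-uniform: the degeneracy of $\bm B$ has to be absorbed by the implicit diffusion, using $h_R(u)\to0$ as $u\to\infty$. I would work with the ratio $\max_{j\sim i}\rho_j/\rho_i$ at the minimizing vertex and split into two cases according to whether this ratio is at least $u_+$.

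\textbf{First-order convergence.} Plug the exact solution at the grid points into \eqref{mfg-scheme}. By Taylor expansion and Proposition~\ref{cont-sol} (the $\mathcal C^2$ bounds and the lower bound $\delta_*$), the truncation residuals $r^n_\phi,r^n_\rho$ are $O(\tau)$. Repeat the fundamental identity for the errors $e_\phi^n=\phi(t_n)-\phi^n$ and $e_\rho^n=\rho(t_n)-\rho^n$, now including the residual terms. Both pairs lie in the compact set $K=\cP_{\min(\delta_T,\delta_*)}\times\{\norm{p}_{\ell^\infty(E)}\le\max(R,C_*')\}$, so \eqref{strong-LL-ineq} applies with a constant $c_K$. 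Since $e_\rho^0=0$ and the terminal term is $\ge0$, this gives
\[
c_K\tau\sum_n\big(\norm{e_\rho^{n+1}}^2+\norm{\gradG e_\phi^n}_E^2\big)\le\tau\sum_n\big(|\ip{r_\phi^n}{e_\rho^{n+1}}|+|\ip{e_\phi^n}{r_\rho^n}|\big).
\]
For the term $\ip{e_\phi^n}{r_\rho^n}$, mass conservation lets us take $r_\rho^n\in\bbR_0^d$, so $e_\phi^n$ may be replaced by its mean-zero part. The Poincaré inequality then controls that part by $\norm{\gradG e_\phi^n}_E$. Young's inequality gives the $\ell^2$-in-time bound $O(\tau^2)$ for $e_\rho$ and $\gradG e_\phi$.

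Finally, upgrade to $\max_n$ bounds by discrete energy (Gronwall) estimates on each equation separately, using the Lipschitz continuity of $\bm H$ and $\bm B$ on $K$. Take $\tau\le\tau_0$ so that the implicit step is absorbed. For the $\rho$-equation this uses $\norm{e_\rho^{n+1}}^2-\norm{e_\rho^n}^2\le 2\ip{e_\rho^{n+1}-e_\rho^n}{e_\rho^{n+1}}$. For $\phi$, run the same argument backward from $e_\phi^N=g(\rho(T))-g(\rho^N)$, which is $O(\norm{e_\rho^N})$.
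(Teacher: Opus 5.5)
The gap is in your $\ell^\infty$ bounds for $\phi^n$. Everything after them depends on these bounds: your positivity step needs the radius $R$, and your convergence step needs the $\tau$-independent compact set. Your uniqueness argument, the minimizing-vertex positivity step and the error analysis otherwise follow the paper's route. The density step you flag as the main obstacle works essentially as you describe.

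\textbf{The maximum principle gives an upper bound, not a lower one.} From $(I-\tau\DeltaG)\phi^n=\phi^{n+1}-\tau\bm H(\rho^{n+1},\gradG\phi^n)$ and $\bm H\ge-C_H$ you get $\phi^{n+1}-\tau\bm H\le\phi^{n+1}+C_H\tau\bm1$. Since $(I-\tau\DeltaG)^{-1}$ is entrywise nonnegative with $(I-\tau\DeltaG)^{-1}\bm1=\bm1$, this yields $\max_i\phi_i^n\le\max_i\phi_i^{n+1}+C_H\tau$. That is the \emph{upper} bound $\phi_i^n\le M_+:=C_g+C_HT$. A lower bound by this route would need an upper bound on $\bm H$, which Assumption~\ref{continuous-data} does not provide; in Example~\ref{exam}, $\bm H$ grows quadratically in $\gradG\phi$.

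\textbf{Your third step cannot supply the other direction.} Testing the scheme with $\rho^{n+1}$ and $\phi^n$ and using the first inequality of \eqref{data-bounds} gives only $\ip{\phi^{n+1}}{\rho^{n+1}}-\ip{\phi^n}{\rho^n}\le C_0\tau$. Hence you get the \emph{lower} bound $\ip{\phi^0}{\rho^0}\ge-C_g-C_0T=:-A$, never an upper bound. Nothing in the assumptions is coercive in $\gradG\phi$, so the proposed Harnack/Poincar\'e oscillation bound has no input. In the paper the gradient bound is a consequence of the $\ell^\infty$ bound, not an ingredient of it.

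\textbf{The correct assembly.} The maximum principle gives $\phi_i^n\le M_+$. Then $\rho_i^0\phi_i^0=\ip{\phi^0}{\rho^0}-\sum_{j\ne i}\rho_j^0\phi_j^0\ge-A-M_+$, and with $\rho_i^0\ge\epsilon$ this gives $\phi_i^0\ge-(A+M_+)/\epsilon$. The missing idea is to propagate this forward by summing the value equation over vertices. Since $\sum_i(\DeltaG\phi^n)_i=0$, the sum $S_n:=\sum_i\phi_i^n$ satisfies $S_{n+1}-S_n=\tau\sum_i\bm H_i\ge-dC_H\tau$, so $S_n\ge S_0-dC_HT$. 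Then $\phi_i^n=S_n-\sum_{j\ne i}\phi_j^n\ge S_n-(d-1)M_+$ is the lower bound, uniform in $\tau$. The edge bound $\norm{\gradG\phi^n}_{\ell^\infty(E)}\le 2\sqrt{\omega_{\max}}\,C$ then follows, and the rest of your argument goes through.
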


Theorem~\ref{general-results} establishes uniqueness,
timestep-uniform estimates, and first-order convergence
for every interior solution of the general scheme. 
We now turn to the potential case \eqref{pot-coeff}, where the variational characterization below also yields existence of an interior discrete solution.  
In the potential case, \eqref{mfg-scheme} becomes: 
$\rho^0=\rho_0$ and $\phi^N=D_\rho\cU_T(\rho^N)$, 
\begin{subequations}\label{potential-scheme}
\begin{empheq}[left=\empheqlbrace]{align}
 \frac{\phi^{n+1}-\phi^n}{\tau}
 &=D_\rho\cH(\rho^{n+1},-\gradG\phi^n)
 +D_\rho \cF(\rho^{n+1})-\DeltaG\phi^n,
 \label{scheme-phi}\\
 \frac{\rho^{n+1}-\rho^n}{\tau}
 &=-\divG D_q\cH(\rho^{n+1},-\gradG\phi^n)
 +\DeltaG\rho^{n+1}.
 \label{scheme-rho}
\end{empheq}
\end{subequations}  
With $S^n=-\phi^n$ and $\mathscr K$ defined in \eqref{canonical-H},
\eqref{potential-scheme} is the symplectic Euler discretization
\[
 \frac{\rho^{n+1}-\rho^n}{\tau}
 =D_S\mathscr K(\rho^{n+1},S^n),
 \qquad
 \frac{S^{n+1}-S^n}{\tau}
 =-D_\rho\mathscr K(\rho^{n+1},S^n).
\]
Thus the Hamiltonian structure also explains the staggered evaluation at
$(\rho^{n+1},S^n)$.

To identify the proposed scheme with a convex optimization problem,
we use
the Legendre duality between the Hamiltonian and the Lagrangian.  
Accordingly, we introduce the Lagrangian $\cL:\cP(G)\times\bbS^{d\times d}\to\bbR\cup\{+\infty\}$, whose Legendre transform in the edge variable is the Hamiltonian
$\cH$ in \eqref{pot-coeff}, i.e., 
$\cH(\rho,q)=\sup_{w\in\bbS^{d\times d}}
\{\ip{w}{q}_E-\cL(\rho,w)\}$. 
 For sequences $\rho=(\rho^n)_{n=0}^N$ and
$m=(m^{n+1})_{n=0}^{N-1}$,  impose
\begin{equation}\label{continuity}
 {\Gamma^n(\rho,m):=\frac{\rho^{n+1}-\rho^n}{\tau}+\divG m^{n+1}=0,}
 \qquad 0\le n\le N-1.
\end{equation}

Set $w^{n+1}:=m^{n+1}+\gradG\rho^{n+1}$ and define the discrete action
$$\mathcal A_\tau(\rho,m) :=\tau\sum_{n=0}^{N-1} \bigl[\cL(\rho^{n+1},w^{n+1})-\cF(\rho^{n+1})\bigr] +\cU_T(\rho^N).$$ This is the right-endpoint time discretization of
the continuous potential action introduced in \cite[Section~1.2]{GangboMunozWuZhang2026}.  
We consider the discrete variational problem 
\begin{equation}\label{min-prob}
 \min\Bigl\{
 \mathcal A_\tau(\rho,m):
 \rho\in\cP(G)^{N+1},\
 m\in(\bbS^{d\times d})^N,\
 \rho^0=\rho_0,\
 {\Gamma^n(\rho,m)=0},\ 0\le n<N
 \Bigr\}. 
\end{equation} 
Problem~\eqref{min-prob} is close in form to dynamic graph optimal transport because it minimizes a convex action over fluxes subject to a linear continuity equation; see \cite{ErbarRumpfSchmitzerSimon2020}.  The key difference is that this is not formulated to
compute a transport distance between two prescribed endpoint measures. 
Instead, its KKT conditions characterize
exactly the discrete potential graph MFG scheme.

To derive its first-order optimality conditions, we introduce the
constraint Lagrangian 
\begin{equation}\label{kkt-cond}
 \mathfrak L_\tau(\rho,m,\phi)
 =
 \mathcal A_\tau(\rho,m)
 {-\tau\sum_{n=0}^{N-1}\ip{\phi^n}{\Gamma^n(\rho,m)}}.
\end{equation} 
 The multiplier $\phi^n$ is attached to the $n$th
continuity constraint and is chosen so that the first variation of \(\mathfrak L_\tau\) with respect to \((\rho,m)\) vanishes in every primal direction. Set
$ 
 {\Gamma(\rho,m)}
 :=
 {\bigl(\Gamma^0(\rho,m),\ldots,\Gamma^{N-1}(\rho,m)\bigr)}
 \in(\bbR_0^d)^N.
$ 
A triple $(\rho,m,\phi)$ satisfies the
\textit{Karush--Kuhn--Tucker (KKT) conditions} if
\begin{equation}\label{kkt-system}
 {\Gamma(\rho,m)=0},\qquad
 D_{(\rho,m)}\mathfrak L_\tau(\rho,m,\phi)
 [\eta,\zeta]=0
\end{equation}
for every
$(\eta,\zeta)\in(\bbR_0^d)^N
\times(\bbS^{d\times d})^N$.
The first condition is equivalent to $D_\phi\mathfrak L_\tau=0$.
Thus the KKT multiplier $\phi$ is the dual variable associated with the discrete continuity constraint \eqref{continuity}.  For the discrete variational problem, we impose the following assumptions
on $\cL$, adapted from \cite[Section~4]{GangboMunozWuZhang2026}. 
\begin{assumption}\label{variational-data}
The Lagrangian $\cL$ is proper, lower semicontinuous, and jointly convex in
$(\rho,w)$, and strictly convex in $w$ whenever it is finite.  Its coercivity
is uniform in the density: for every $A\in\bbR$ there is $R_A>0$ such that
\begin{equation}\label{coercivity}
 \rho\in\cP(G),\quad \cL(\rho,w)\le A
 \quad\text{implies}\quad \norm{w}_E\le R_A.
\end{equation}
 Moreover,
\begin{equation}\label{zero-action}
 \cL(\rho,0)<\infty
 \qquad\text{for every }\rho\in\cP^\circ(G).
\end{equation}
The effective domain of $\cL$ satisfies: 
\begin{equation}\label{action-boundary}
 \cL(\rho,w)<\infty,\quad \rho_i=0
 \quad\text{implies}\quad
 w_{ij}=0\quad\text{for every }j\sim i.
\end{equation}
In addition, $\cL\in\mathcal C^1(\cP^\circ(G)\times\bbS^{d\times d})$, and $\cL$ is locally uniformly
superlinear in $w$: for every compact set $K\Subset\cP^\circ(G)$,
\begin{equation}\label{superlinear}
 \lim_{R\to\infty}
 \inf_{\substack{\rho\in K\\ \norm{w}_E\ge R}}
 \frac{\cL(\rho,w)}{\norm{w}_E}=+\infty.
\end{equation}
\end{assumption}
For the logarithmic-mean quadratic model in
Example~\ref{exam}, the Legendre dual of $\cH_\theta$ is
\begin{equation}\label{quad-L}
 \cL_\theta(\rho,w)=\frac14\sum_{(i,j)\in E}
 \frac{w_{ij}^2}{\theta(\rho_i,\rho_j)},
\end{equation}
with the lower-semicontinuous convention that a summand is zero if both
$w_{ij}$ and $\theta_{ij}(\rho)$ vanish and is $+\infty$ if
$\theta_{ij}(\rho)=0$ but $w_{ij}\ne0$. 
The appendix verifies Assumption~\ref{variational-data} for
$\cL_\theta$.
The following theorem establishes
the equivalence between the constrained minimization problem and the potential
MFG scheme.  It proves that the discrete action has a unique interior
minimizer, identifies the normalized KKT multiplier with the value variable
in \eqref{potential-scheme}, and shows conversely that every interior solution
of the scheme generates this minimizer.

\begin{theorem}
\label{potential-sol}
Let $\rho_0\in\cP^\circ(G)$, and let
Assumptions~\ref{pot-struct} and \ref{variational-data} hold.  Then
\begin{itemize}
\item[(i)] The
minimization problem \eqref{min-prob} has a unique minimizer
$(\rho_*,m_*)$, and $\rho_*^n\in\cP^\circ(G)$ for every $n$.

\item[(ii)] There are unique  Lagrange multipliers
$\phi^0,\ldots,\phi^{N-1}\in\bbR_0^d$ such that
$(\rho_*,m_*,\phi)$ satisfies \eqref{kkt-system}.  Set
$\phi^N=D_\rho\cU_T(\rho_*^N)$.  There are unique constants
$a_0,\ldots,a_{N-1}$ such that, with $a_N=0$ and
$\widehat\phi^n=\phi^n+a_n\bm1,0\le n\le N$, the pair
$(\widehat\phi,\rho_*)$ solves \eqref{potential-scheme}.  

\item[(iii)] Conversely, every interior solution
$(\phi,\rho)$ of \eqref{potential-scheme} generates the unique minimizer of \eqref{min-prob} by
\begin{equation}\label{optimal-flux}
 w^{n+1}=D_q\cH(\rho^{n+1},-\gradG\phi^n),
 \qquad
 m^{n+1}=w^{n+1}-\gradG\rho^{n+1}.
\end{equation}
\end{itemize}
In particular, \eqref{potential-scheme} has an interior solution.
If, in addition, Assumption~\ref{continuous-data} holds and $\rho_0\in\cP_\epsilon$, 
then Remark~\ref{potential-monotonicity} supplies Assumption~\ref{assume-monotone}, so Theorem~\ref{general-results} applies and this solution satisfies \eqref{uniform-bounds} and \eqref{first-rate}. 
\end{theorem}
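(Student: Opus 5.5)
Our plan has four parts: existence of a minimizer by the direct method, strict interiority of that minimizer, derivation of the KKT system and its identification with \eqref{potential-scheme}, and the converse via convexity. Uniqueness comes from convexity of the action together with the invertibility of the change of variables $(\rho,m)\mapsto(\rho,w)$, $w^{n+1}=m^{n+1}+\gradG\rho^{n+1}$, which is linear.

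\textbf{Existence and uniqueness.} The feasible set is nonempty with finite action. Take $\rho^n$ equal to the heat flow $\rho^{n+1}=(I-\tau\DeltaG)^{-1}\rho^n$ and $m^{n+1}=-\gradG\rho^{n+1}$. Then $w=0$, the densities stay in $\cP^\circ(G)$, and \eqref{zero-action} gives finiteness. Along a minimizing sequence we argue as follows. The densities lie in the compact set $\cP(G)^{N+1}$. The terms $-\cF$ and $\cU_T$ are bounded below on the simplex, so each $\cL(\rho^{n+1},w^{n+1})$ is bounded above, and \eqref{coercivity} bounds $w$, hence $m$. The constraints are linear and $\mathcal A_\tau$ is lower semicontinuous, since $\cL$ is lsc and $\cF,\cU_T$ are continuous; hence a minimizer exists. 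The action is convex in $(\rho,w)$ and strictly convex in $\rho^{n+1}$ through $-\cF$. Two minimizers therefore share $\rho$. Strict convexity of $\cL$ in $w$ then forces them to share $w$, and hence $m$.

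\textbf{Interiority (main obstacle).} We must show $\rho_*^n\in\cP^\circ(G)$ for every $n$. Suppose some $\rho_{*,i}^{n+1}=0$, and let $n+1$ be the first such level. By \eqref{action-boundary}, $w_{ij}^{n+1}=0$ for all $j\sim i$, so $m_{ij}^{n+1}=-\sqrt{\omega_{ij}}(\rho_i^{n+1}-\rho_j^{n+1})=\sqrt{\omega_{ij}}\rho_j^{n+1}$. The continuity equation at vertex $i$ then gives
\[
0=\rho^{n+1}_i=\rho^n_i-\tau(\divG m^{n+1})_i=\rho^n_i+\tau\sum_{j\sim i}\omega_{ij}\rho^{n+1}_j .
\]
Both terms on the right are nonnegative, so $\rho^n_i=0$ and $\rho^{n+1}_j=0$ for all $j\sim i$. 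The first conclusion contradicts the minimality of $n+1$, unless $n=0$, in which case it contradicts $\rho_0\in\cP^\circ(G)$. Hence every feasible point with finite action is automatically interior, so no perturbation argument is needed. The delicate point is to check that \eqref{action-boundary} is used with the correct sign convention of $\divG$. We also want a uniform interior bound on the sublevel set, so that the KKT derivation is local; this follows from compactness of the sublevel set together with the argument above.

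\textbf{KKT and identification with the scheme.} The minimizer lies in the interior, $\cL$ is $\mathcal C^1$ there, and the constraints are affine in $(\eta,\zeta)$. Hence the Lagrange multiplier theorem applies, with surjectivity of $\Gamma$ onto $(\bbR_0^d)^N$ easy via the $\rho$-directions. Multipliers normalized in $\bbR_0^d$ are then unique. Varying $m^{n+1}$ in the direction $\zeta$, and using \eqref{graph-ibp}, gives $D_w\cL(\rho^{n+1},w^{n+1})=-\gradG\phi^n$. Legendre duality yields $w^{n+1}=D_q\cH(\rho^{n+1},-\gradG\phi^n)$, which with the continuity equation is \eqref{scheme-rho}. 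Varying $\rho^{n+1}$ produces $D_\rho\cL-D_\rho\cF$, plus the term from $\gradG\rho^{n+1}$ inside $w$ (which after integration by parts gives $-\DeltaG\phi^n$), plus the multiplier difference $(\phi^{n+1}-\phi^n)/\tau$, with $\phi^N$ replaced by $D_\rho\cU_T$ at the last level. Using $D_\rho\cL(\rho,w)=-D_\rho\cH(\rho,q)$ at dual pairs gives \eqref{scheme-phi}, but only on $\bbR_0^d$, that is, modulo $\bm1$. The constants $a_n$ are then fixed backward from $a_N=0$: at each step one constant is chosen so that the component along $\bm1$ matches, and this choice is unique.

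\textbf{Converse.} Given an interior solution of \eqref{potential-scheme}, define $w$ and $m$ by \eqref{optimal-flux}. Reversing the computation above shows that $(\rho,m,\phi)$ satisfies the KKT conditions. For a convex problem with affine constraints, KKT conditions are sufficient for optimality, so this point is the unique minimizer. The final claims then follow from Remark~\ref{potential-monotonicity} and Theorem~\ref{general-results}.
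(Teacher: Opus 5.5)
Your proposal is correct and follows essentially the same route as the paper. The shared steps are the heat-flow competitor, the direct method with uniqueness from strict convexity of $-\cF$ and of $\cL$ in $w$, the first-vanishing-level argument via \eqref{action-boundary} for interiority, the Lagrange multiplier theorem with surjectivity through the density directions, Legendre duality, backward fixing of the constants $a_n$, and convexity for the converse. The one step you pass over is that bounding each $\cL(\rho^{n+1},w^{n+1})$ above along a minimizing sequence, and showing the infimum is finite, also requires $\cL$ itself to be bounded below. The paper obtains this bound from \eqref{coercivity} with $A=0$, together with lower semicontinuity and properness on the compact set $\cP(G)\times\{\norm{w}_E\le R_0\}$.
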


We mention that the variational characterization of the scheme is  specific to the potential case.  
A general pair $(\bm H,\bm B)$ need not be the derivative of a
single scalar functional, so such a convex minimization problem is not
available without an additional integrability condition. This
is also standard for potential MFGs in Euclidean spaces; see \cite{AchdouCamilliCapuzzoDolcetta2012,
BonnansLavignePfeiffer2023,LavignePfeiffer2023}.

For the logarithmic-mean quadratic model, the verification in the appendix shows that all assumptions of Theorem~\ref{potential-sol} and Theorem~\ref{general-results} hold.  Hence its discrete solution exists, is unique and uniformly interior, and satisfies the first-order estimate \eqref{first-rate}.

\section{Proof of  Theorem~\ref{general-results}}\label{sec4} 
This section proves Theorem~\ref{general-results}, whose proof is divided into two parts.
In Section \ref{sec4.1}, we establish uniqueness together with the timestep-independent
interior and stability estimates.
In Section \ref{sec4.2}, we combine these estimates with a consistency argument and the
discrete fundamental identity to derive the first-order convergence. Throughout this section,
we work under the hypotheses  of Theorem~\ref{general-results} and let
$(\phi^n,\rho^n)_{n=0}^N$ be a solution of
\eqref{mfg-scheme}.  

\subsection{Uniqueness and uniform estimates}\label{sec4.1}

This subsection proves the uniqueness and uniform bounds \eqref{uniform-bounds} in
Theorem~\ref{general-results}.
We begin with uniqueness of the numerical solution in the open simplex.
\begin{lemma}
\label{disc-unique}
Under Assumptions~\ref{continuous-data} and
\ref{assume-monotone}, the scheme 
\eqref{mfg-scheme} has at most one solution satisfying
$\rho^n\in\cP^\circ(G)$ for $0\le n\le N$.
\end{lemma}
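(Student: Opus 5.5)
We argue by the discrete Lasry--Lions method. Suppose $(\phi^n,\rho^n)$ and $(\psi^n,\sigma^n)$ are two solutions of \eqref{mfg-scheme} with $\rho^n,\sigma^n\in\cP^\circ(G)$ and the same initial datum $\rho^0=\sigma^0=\rho_0$. Set $\bar\phi^n=\phi^n-\psi^n$, $\bar\rho^n=\rho^n-\sigma^n$. Mass preservation gives $\bar\rho^n\in\bbR_0^d$ for all $n$, and $\bar\rho^0=0$. We also write $p^n=\gradG\phi^n$, $q^n=\gradG\psi^n$, and use the shorthands $\delta\bm H^n=\bm H(\rho^{n+1},p^n)-\bm H(\sigma^{n+1},q^n)$ and $\delta\bm B^n=\bm B(\rho^{n+1},p^n)-\bm B(\sigma^{n+1},q^n)$.

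The first step is the discrete fundamental identity. We use the summation by parts
\[
\ip{\bar\phi^N}{\bar\rho^N}-\ip{\bar\phi^0}{\bar\rho^0}=\sum_{n=0}^{N-1}\Big[\ip{\bar\phi^{n+1}-\bar\phi^n}{\bar\rho^{n+1}}+\ip{\bar\phi^n}{\bar\rho^{n+1}-\bar\rho^n}\Big],
\]
which is exactly adapted to the staggering, since both increments are paired with the time levels $(\rho^{n+1},\phi^n)$ that appear in the scheme. We substitute the differences of the two schemes. The value equation contributes $\tau\ip{\delta\bm H^n-\DeltaG\bar\phi^n}{\bar\rho^{n+1}}$. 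The density equation contributes $\tau\ip{\bar\phi^n}{\divG\delta\bm B^n+\DeltaG\bar\rho^{n+1}}$. By \eqref{graph-laplacian} the two Laplacian terms are $-\tau\ip{\DeltaG\bar\phi^n}{\bar\rho^{n+1}}$ and $+\tau\ip{\bar\phi^n}{\DeltaG\bar\rho^{n+1}}$, and these cancel by the symmetry of $\DeltaG$. By \eqref{graph-ibp}, the divergence term becomes $-\tau\ip{\delta\bm B^n}{\bar p^n}_E$, where $\bar p^n=p^n-q^n$. Using $\bar\rho^0=0$ we obtain
\[
\ip{g(\rho^N)-g(\sigma^N)}{\rho^N-\sigma^N}=\tau\sum_{n=0}^{N-1}\Big[\ip{\delta\bm H^n}{\bar\rho^{n+1}}-\ip{\delta\bm B^n}{\bar p^n}_E\Big].
\]

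The second step applies the monotonicity hypotheses. By \eqref{terminal-mono} the left-hand side is $\ge0$. By \eqref{LL} of Remark~\ref{remark1}, which we may assume, each summand on the right is $\le0$, and it is $<0$ unless $(\rho^{n+1},p^n)=(\sigma^{n+1},q^n)$. Hence every summand vanishes, so $\rho^{n+1}=\sigma^{n+1}$ and $\gradG\phi^n=\gradG\psi^n$ for $0\le n\le N-1$. Here interiority is used: \eqref{LL} is only available on $\cP^\circ(G)$, and the identity with $\rho^{n+1}\in\cP^\circ(G)$ fits exactly the arguments $(\rho^{n+1},p^n)$.

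The last step recovers $\phi$ itself. Since $\rho^N=\sigma^N$, we get $\phi^N=g(\rho^N)=\psi^N$. We then go backward in time. Given $\phi^{n+1}=\psi^{n+1}$, the value equation reads $\phi^n=\phi^{n+1}-\tau\bm H(\rho^{n+1},\gradG\phi^n)+\tau\DeltaG\phi^n$. This depends on $\phi^n$ only through $\gradG\phi^n$, which already agrees for the two solutions, and $\rho^{n+1}$ agrees as well. Hence $\phi^n=\psi^n$. Note that this avoids the constant-mode ambiguity that $\gradG$ alone would leave.

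The main point requiring care is the first step: the index bookkeeping must make the Laplacians cancel exactly and must produce precisely the combination appearing in \eqref{LL}. The staggered choice $(\rho^{n+1},\gradG\phi^n)$ is what makes this work.
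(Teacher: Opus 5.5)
Your proof is correct and follows the paper's argument: you use the same discrete fundamental identity, where the staggered pairing of $\rho^{n+1}$ with $\phi^n$ makes the two Laplacian terms cancel, and then terminal monotonicity and \eqref{LL} force $\rho^{n+1}=\sigma^{n+1}$ and $\gradG\phi^n=\gradG\psi^n$. The only difference is the last step: the paper uses connectivity to write $\phi^n-\psi^n=b_n\bm1$ and then shows $b_{n+1}=b_n$ with $b_N=0$, whereas you read off $\phi^n=\psi^n$ directly from the value equation, which depends on $\phi^n$ only through $\gradG\phi^n$; your route is slightly shorter and does not need connectivity.
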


\begin{proof}
Let $(\phi,\rho)$ and $(\widetilde\phi,\widetilde\rho)$ be two such
solutions.  Set
$\delta\phi^n=\phi^n-\widetilde\phi^n$,
$\delta\rho^n=\rho^n-\widetilde\rho^n$, and
$\delta p^n=\gradG\delta\phi^n$.

\smallskip
\noindent\emph{Step 1. Discrete fundamental identity.}
Define
\[
\begin{aligned}
 \delta\bm H^n
 :=
 \bm H(\rho^{n+1},\gradG\phi^n)
 -\bm H(\widetilde\rho^{n+1},\gradG\widetilde\phi^n),\;\;
 \delta\bm B^n
 :=
 \bm B(\rho^{n+1},\gradG\phi^n)
 -\bm B(\widetilde\rho^{n+1},\gradG\widetilde\phi^n).
\end{aligned}
\]
We first prove
\begin{equation}\label{disc-identity}
 \ip{\delta\phi^{n+1}}{\delta\rho^{n+1}}
 -\ip{\delta\phi^n}{\delta\rho^n}
 =\tau\ip{
 \delta \bm H^n}
 {\delta\rho^{n+1}}-\tau\ip{\delta
 \bm B^n}
 {\delta p^n}_E,
\end{equation}
for $0\le n<N$.
Testing the difference of the value equation in
\eqref{mfg-scheme} with $\delta\rho^{n+1}$, and the density equation with
$\delta\phi^n$ gives
\begin{align*}
 \frac1\tau\ip{\delta\phi^{n+1}-\delta\phi^n}{\delta\rho^{n+1}}
 &=\ip{\delta\bm H^n}{\delta\rho^{n+1}}
 -\ip{\DeltaG\delta\phi^n}{\delta\rho^{n+1}},\\ \frac1\tau\ip{\delta\rho^{n+1}-\delta\rho^n}{\delta\phi^n}
 &=\ip{\divG\delta\bm B^n}{\delta\phi^n}
 +\ip{\DeltaG\delta\rho^{n+1}}{\delta\phi^n}.
\end{align*}
Adding these two identities and using 
\begin{equation*}
 -\ip{\DeltaG\delta\phi^n}{\delta\rho^{n+1}}
 +\ip{\delta\phi^n}{\DeltaG\delta\rho^{n+1}}=0,\quad \ip{\divG\delta\bm B^n}{\delta\phi^n}
=-\ip{\delta\bm B^n}{\delta p^n}_E, 
\end{equation*}
proves
\eqref{disc-identity}.

\smallskip
\noindent\emph{Step 2. Uniqueness.}
Assumption~\ref{assume-monotone} gives
\eqref{terminal-mono}, while \eqref{LL} follows from
\eqref{differential-LL}. 
For $0\le n<N$, set
$ 
 M_n:=\ip{\delta\bm H^n}{\delta\rho^{n+1}}
 -\ip{\delta\bm B^n}{\delta p^n}_E.
$ 
Condition~\eqref{LL} gives $M_n<0$ whenever
$(\rho^{n+1},\gradG\phi^n)\ne
(\widetilde\rho^{n+1},\gradG\widetilde\phi^n)$, while $M_n=0$ when these
two pairs are equal.  Hence $M_n\le0$, with equality if and only if
$\rho^{n+1}=\widetilde\rho^{n+1}$ and
$\gradG\phi^n=\gradG\widetilde\phi^n$.  Summing \eqref{disc-identity} from $n=0$ to $N-1$, and using
$\delta\rho^0=0$ and  $\ip{\delta\phi^N}{\delta\rho^N}
=\ip{g(\rho^N)-g(\widetilde\rho^N)}
{\rho^N-\widetilde\rho^N}\ge0$, we obtain
$$0\le\ip{\delta\phi^N}{\delta\rho^N}
=\tau\sum_{n=0}^{N-1}M_n\le0.$$
Hence 
\[
 \delta\rho^{n+1}=0,
 \qquad
 \delta p^n=\gradG\phi^n-\gradG\widetilde\phi^n=0,
 \qquad 0\le n<N.
\]
For every edge $(i,j)\in E$,
\[
 0=(\gradG\delta\phi^n)_{ij}
 =\sqrt{\omega_{ij}}(\delta\phi_i^n-\delta\phi_j^n).
\]
Since $\omega_{ij}>0$, this gives
$\delta\phi_i^n=\delta\phi_j^n$ at adjacent vertices.  Because $G$ is
connected, any two vertices $i$ and $j$ can be joined by a path
$i=i_0\sim i_1\sim\cdots\sim i_m=j$.  Along this path we have 
$\delta\phi_i^n=\delta\phi_{i_1}^n=\cdots=\delta\phi_j^n$.
Thus there exist $b_0,\ldots,b_{N-1}\in\mathbb R$ such that
$ 
\delta\phi^n=b_n\mathbf1,\; 0\le n<N.
$ 
Moreover,
$\delta\rho^N=0$ and the terminal condition give
$\delta\phi^N=g(\rho^N)-g(\widetilde\rho^N)=0$. Set $b_N=0$.
Subtracting the two
value equations in \eqref{mfg-scheme} gives
\[
 \frac{\delta\phi^{n+1}-\delta\phi^n}{\tau}
 =
 \bm H(\rho^{n+1},\gradG\phi^n)
 -\bm H(\widetilde\rho^{n+1},\gradG\widetilde\phi^n)
 -\DeltaG\delta\phi^n.
\]
Since $\delta\rho^{n+1}=0$ and
$\gradG\phi^n=\gradG\widetilde\phi^n$, the difference of the two $\bm H$ terms is zero.  Moreover,
\[
 \DeltaG\delta\phi^n
 =\DeltaG(b_n\bm1)=b_n\DeltaG\bm1=0.
\]
Consequently,
\[
 \frac{b_{n+1}-b_n}{\tau}\bm1=0,
 \qquad 0\le n<N,
\]which yields 
$b_{n+1}=b_n$.  Starting from
$b_N=0$ and proceeding backward gives $b_n=0$ for every $0\le n\le N$. This finishes the uniqueness proof.
\end{proof}

We now prove estimates that are uniform in the time step. 
\begin{lemma}
\label{val-grad}
Let Assumption~\ref{continuous-data} hold and let
$\rho^0\in\cP_\epsilon$.  Suppose that $(\phi^n,\rho^n)_{n=0}^N$ solves 
\eqref{mfg-scheme}.  Set $p^n=\gradG\phi^n$.  Then
\begin{equation}\label{one-sol-dual}
\begin{aligned}
 \ip{\phi^{n+1}}{\rho^{n+1}}
 -\ip{\phi^n}{\rho^n}
 &=\tau\ip{\bm H(\rho^{n+1},p^n)}{\rho^{n+1}}-\tau\ip{\bm B(\rho^{n+1},p^n)}{p^n}_E.
\end{aligned}
\end{equation}
Moreover, 
\begin{equation}\label{duality-bound}
 \ip{\phi^{n+1}}{\rho^{n+1}}
 -\ip{\phi^n}{\rho^n}\le C_0\tau.
\end{equation}
There is a constant $C_\phi>0$ such that 
\begin{equation}\label{val-bound}
 \max_{0\le n\le N}\norm{\phi^n}_{\ell^\infty}\le C_\phi,\quad \norm{\gradG\phi^n}_{\ell^\infty(E)}
 \le R_\phi:=2\sqrt{\omega_{\max}}C_\phi,
\end{equation}
where $C_\phi$ depends only on
$d,T,\epsilon,C_0,C_H,C_g$.  
\end{lemma}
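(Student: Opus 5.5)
The plan is to prove the three assertions of Lemma~\ref{val-grad} in order: the single-solution duality identity, the one-sided bound, and then $\ell^\infty$ bounds on $\phi^n$. The gradient bound is immediate from the last of these, since $|(\gradG\phi^n)_{ij}|\le\sqrt{\omega_{ij}}(|\phi^n_i|+|\phi^n_j|)\le 2\sqrt{\omega_{\max}}C_\phi$.

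\emph{Identity \eqref{one-sol-dual}.} This is Step~1 of Lemma~\ref{disc-unique} applied to one solution instead of a difference. I test the value equation in \eqref{mfg-scheme} with $\rho^{n+1}$ and the density equation with $\phi^n$, then add the two results. The left-hand sides telescope to $\ip{\phi^{n+1}}{\rho^{n+1}}-\ip{\phi^n}{\rho^n}$. The Laplacian terms $-\ip{\DeltaG\phi^n}{\rho^{n+1}}+\ip{\phi^n}{\DeltaG\rho^{n+1}}$ cancel because $\DeltaG$ is symmetric. The flux term becomes $-\ip{\bm B(\rho^{n+1},p^n)}{p^n}_E$ by \eqref{graph-ibp}. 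The bound \eqref{duality-bound} then follows directly from the first inequality in \eqref{data-bounds} with $(\rho,p)=(\rho^{n+1},p^n)$.

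\emph{Upper bound on $\phi^n$.} I rewrite the value equation as
\[
(I-\tau\DeltaG)\phi^n=\phi^{n+1}-\tau\bm H(\rho^{n+1},p^n)\le \phi^{n+1}+\tau C_H\bm1,
\]
using $\bm H_i\ge -C_H$. The matrix $I-\tau\DeltaG$ is an M-matrix: it has unit-plus diagonal, nonpositive off-diagonal entries, and is diagonally dominant. Its inverse is therefore entrywise nonnegative and maps $\bm1$ to $\bm1$. Equivalently, evaluating at a maximizing vertex $i$ of $\phi^n$ gives $(\DeltaG\phi^n)_i\le0$. Either way, $\max_i\phi^n_i\le\max_i\phi^{n+1}_i+\tau C_H$. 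Iterating backward from $\phi^N=g(\rho^N)$ and using $\norm{g}_{\ell^\infty}\le C_g$, I get $\phi^n_i\le C_g+TC_H=:U$ for all $n$ and $i$.

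\emph{Lower bound on $\phi^n$.} This is the main obstacle, because $\bm H$ has no upper bound and the maximum principle gives nothing from below. I would use the duality bound instead. Summing \eqref{duality-bound} from $n$ to $N-1$ gives
\[
\ip{\phi^n}{\rho^n}\ge\ip{\phi^N}{\rho^N}-C_0T\ge -C_g-C_0T .
\]
At $n=0$, the bound $\phi^0\le U$ and $\rho^0_i\ge\epsilon$ yield $\epsilon\,\phi^0_i\ge -C_g-C_0T-U$, so
\[
\phi^0_i\ge-\epsilon^{-1}(C_g+C_0T+U).
\]
To transfer this to $n>0$ without a lower bound on $\rho^n$, the first approach I would try is to exploit that $\min_i\phi^n$ is controlled by weighted averages of $\phi^n$. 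Test the value equation against auxiliary densities $\sigma^k$ solving the implicit heat scheme $\sigma^{k+1}-\sigma^k=\tau\DeltaG\sigma^{k+1}$, started at a vertex mass $\delta_i$ at time $n$, or run backward from time $n$. As computed above, this produces $\ip{\phi^{k+1}}{\sigma^{k+1}}-\ip{\phi^k}{\sigma^k}=\tau\ip{\bm H}{\sigma^{k+1}}$. I would combine this with the already-controlled quantities $\ip{\phi^0}{\cdot}$ and $\ip{\phi^n}{\rho^n}$, together with the upper bound $U$. The intended outcome is $\phi^n_i\ge -C(d,T,\epsilon,C_0,C_H,C_g)$.

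If that comparison does not close, the fallback is to shift the problem to start at time $n$ and reuse the time-$0$ argument with $\rho^n$ in place of $\rho^0$. This requires a positivity estimate for $\rho^n$, and in that case the lower bound on $\phi$ would have to be proved jointly with the interior estimate of the next step.
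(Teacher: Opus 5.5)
\textbf{Verdict: there is a genuine gap in the lower bound on $\phi^n$ for $n\ge1$.} Everything else in your proposal is correct and matches the paper: the identity \eqref{one-sol-dual}, the bound \eqref{duality-bound}, the backward maximum-principle bound $\phi^n_i\le U=C_g+C_HT$, the lower bound on $\phi^0$ from $\rho^0\ge\epsilon$, and the gradient bound.

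The step you flag yourself is never actually closed, and the specific devices you propose do not work.
\begin{itemize}
\item If you start $\sigma^n=\delta_i$ at time $n$ and run the implicit heat scheme forward, your identity only yields $\phi^n_i\le\ip{\phi^m}{\sigma^m}+C_H(t_m-t_n)$ for $m>n$. That is again an \emph{upper} bound.
\item If you run backward, $\sigma^k=(I-\tau\DeltaG)\sigma^{k+1}$, positivity is lost after one step: $\sigma^{n-1}_j=-\tau\omega_{ij}$ for $j\sim i$. Then $\tau\ip{\bm H(\rho^{k+1},p^k)}{\sigma^{k+1}}$ cannot be bounded below using only $\bm H_i\ge-C_H$, and no upper bound on $\bm H$ is available.
\item Starting $\delta_i$ forward from time $0$ fails for a different reason. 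The weights $\sigma^n_j$, $j\ne i$, degenerate as $t_n\to0$, so no $\tau$-uniform pointwise bound can be extracted.
\end{itemize}
Your fallback is circular. Lemma~\ref{q-interior} needs a gradient bound $R$ as input, and $C_R$ can grow very fast in $R$ (exponentially for the logarithmic-mean model). So a joint induction with $R\sim1/\min_i\rho^n_i$ need not close on all of $[0,T]$. It would also make $C_\phi$ depend on $\omega$ and $\mathfrak a$, which the statement excludes. In the paper, Lemma~\ref{val-grad} is exactly what supplies $R=R_\phi$ to Lemma~\ref{q-interior}, breaking this circularity.

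\textbf{The missing idea} is to test against the \emph{stationary} solution of your heat scheme, the constant vector; that is, sum the value equation over the vertices. Since $\sum_i(\DeltaG\phi^n)_i=0$, the sums $S_n=\sum_i\phi^n_i$ satisfy
\[
S_{n+1}-S_n=\tau\sum_{i=1}^d\bm H_i(\rho^{n+1},p^n)\ge-dC_H\tau .
\]
Thus the lower bound on $\bm H$ propagates \emph{forward} in time. Using your bound on $\phi^0$,
\[
S_n\ge S_0-dC_HT\ge-d\epsilon^{-1}(C_g+C_0T+U)-dC_HT .
\]
Combined with the componentwise upper bound, $\phi^n_i=S_n-\sum_{j\ne i}\phi^n_j\ge S_n-(d-1)U$. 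This gives $C_\phi$ depending only on $d,T,\epsilon,C_0,C_H,C_g$.

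Within your own framework this corresponds to $\sigma^k\equiv\bm1/d$. Alternatively, take $\sigma^0=\rho^0$ run forward, which stays $\ge\epsilon$ by the discrete minimum principle. What is really needed is a nonnegative unit-mass test density that is launched at time $0$, where $\phi$ is already controlled, and that remains uniformly bounded below at time $t_n$.
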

\begin{proof}
Test the value equation in \eqref{mfg-scheme} with $\rho^{n+1}$ and
the density equation with $\phi^n$.
By \eqref{graph-ibp}, the two Laplacian terms cancel and the
divergence term becomes $-\ip{\bm B}{p^n}_E$. 
This gives
\eqref{one-sol-dual}.  The first inequality in
\eqref{data-bounds} gives \eqref{duality-bound}.

Set $M_n=\max_i\phi_i^n$.  Let $i_n$ be a vertex at which
$\phi^n$ attains its maximum.  Then \[(\Delta_G\phi^n)_{i_n}
=
\sum_{j\sim i_n}\omega_{i_nj}
(\phi_j^n-\phi_{i_n}^n)
\le 0.\]
The $i_n$th component of the value equation in
\eqref{mfg-scheme} and the bound
$\bm H_{i_n}\ge-C_H$ give
$\phi_{i_n}^{n+1}-M_n =\tau\bigl[\bm H_{i_n}(\rho^{n+1},p^n) -(\DeltaG\phi^n)_{i_n}\bigr] \ge-C_H\tau$.
Since $\phi_{i_n}^{n+1}\le M_{n+1}$, we have $M_n\le M_{n+1}+C_H\tau
.$ Iterating this inequality and using the terminal condition \(\phi^N=g(\rho^N)\), we obtain \begin{equation}\label{val-upper}
  M_n
\le M_N+C_H(N-n)\tau
\le C_g+C_HT
=:M_+, 
\end{equation} where we recall that $C_g$ is given in Assumption \ref{continuous-data}. 
Write $J_n=\ip{\phi^n}{\rho^n}$.  Summing
\eqref{duality-bound} gives
$J_0\ge J_N-C_0T\ge-C_g-C_0T=:-A$.
For every vertex $i$, use $\rho_i^0\ge\epsilon$ and
$\phi_j^0\le M_+$ to obtain
$$\rho_i^0\phi_i^0 =J_0-\sum_{j\ne i}\rho_j^0\phi_j^0 \ge-A-M_+, \quad \phi_i^0\ge-\frac{A+M_+}{\epsilon}.$$
Thus, with $S_n=\sum_{i=1}^d \phi_i^n$,
$S_0\ge-\frac{d(A+M_+)}{\epsilon}$.
Summing the value equation in \eqref{mfg-scheme} over the vertices and using
$\sum_{i=1}^d(\DeltaG\phi^n)_i=0$ yields
$S_{n+1}-S_n =\tau\sum_i\bm H_i(\rho^{n+1},p^n) \ge-dC_H\tau$.
Therefore
$$
S_n
=S_0+\sum_{k=0}^{n-1}(S_{k+1}-S_k)\ge S_0-dC_Hn\tau\ge-\frac{d(A+M_+)}{\epsilon}-dC_HT.
$$ This, together with \(\sum_{j\ne i}\phi_j^n\le(d-1)M_+,\) yields 
\begin{equation*}
\begin{aligned}
 \phi_i^n
 =S_n-\sum_{j\ne i}\phi_j^n\ge-\frac{d(A+M_+)}{\epsilon}
 -dC_HT-(d-1)M_+.
\end{aligned}
\end{equation*}
Together with \eqref{val-upper}, this proves the first inequality in 
\eqref{val-bound}.  Finally,
$|(\gradG\phi^n)_{ij}| =\sqrt{\omega_{ij}}|\phi_i^n-\phi_j^n| \le2\sqrt{\omega_{\max}}C_\phi$,
which finishes the proof. 
\end{proof}

\begin{lemma}
\label{q-interior}
Let Assumption~\ref{continuous-data} hold. 
Suppose that
$\rho^0\in\cP^\circ(G)$ and $\rho^n\in\cP(G)$ for every $n$, and 
$\norm{\gradG\phi^n}_{\ell^\infty(E)}\le R$.  If
$\underline\rho^n=\min_i\rho_i^n$, then there is a constant $C_R$, independent of
$n,N,\tau$, such that
\begin{equation}\label{interior-step}
 \underline\rho^{n+1}\ge\frac{\underline\rho^n}{1+C_R\tau}.
\end{equation}
Consequently,
\begin{equation}\label{global-interiority}
 \underline\rho^n\ge e^{-C_Rt_n}\underline\rho^0.
\end{equation}
\end{lemma}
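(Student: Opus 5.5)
The plan is a discrete minimum principle at a vertex where $\rho^{n+1}$ attains its minimum. The density equation in \eqref{mfg-scheme} is implicit in $\rho^{n+1}$, so the graph Laplacian there has a favourable sign. The transport flux is controlled by the bound \eqref{bounded-mobility} with $R$ as in the hypothesis, together with the boundary extension \eqref{boundary-B_revise}.

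\emph{Step 1: positivity by induction.} Assume $\rho^n\in\cP^\circ(G)$. Let $i$ be a vertex with $\rho_i^{n+1}=\underline\rho^{n+1}$, and suppose for contradiction that $\rho_i^{n+1}=0$. Then $\rho_i^{n+1}\rho_j^{n+1}=0$ for every $j\sim i$, so \eqref{boundary-B_revise} gives $\bm B_{ij}(\rho^{n+1},\gradG\phi^n)=0$ and hence $(\divG\bm B)_i=0$. Since $i$ is a minimizer, $(\DeltaG\rho^{n+1})_i\ge0$. The $i$th density equation then yields
\[
0-\rho_i^n=\tau(\DeltaG\rho^{n+1})_i\ge0,
\]
which contradicts $\rho_i^n>0$. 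Hence every $\rho^{n+1}$ is interior, and all ratios $u_j:=\rho_j^{n+1}/\rho_i^{n+1}$ are well defined with $u_j\ge1$ by minimality.

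\emph{Step 2: the one-step estimate.} At the minimizing vertex $i$ the $i$th component of the density equation reads
\[
\rho_i^{n+1}-\rho_i^n=\tau\sum_{j\sim i}\Bigl[-\sqrt{\omega_{ij}}\,\bm B_{ij}+\omega_{ij}(\rho_j^{n+1}-\rho_i^{n+1})\Bigr].
\]
Using \eqref{bounded-mobility}, each bracket is bounded below by $-\rho_i^{n+1}f_{ij}(u_j)$, where
\[
f_{ij}(u):=\sqrt{\omega_{ij}}(1+u)h_R(u)-\omega_{ij}(u-1),\qquad u\ge1.
\]
The key point is that $f_{ij}$ is bounded above on $[1,\infty)$. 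Since $h_R(u)\to0$ as $u\to\infty$, choose $U\ge3$ such that $h_R(u)\le\sqrt{\omega_{\min}}/4$ for $u\ge U$. For such $u$,
\[
\sqrt{\omega_{ij}}(1+u)h_R\le\tfrac12\omega_{ij}(u-1),
\]
so $f_{ij}\le0$. On $[1,U]$ we have $f_{ij}\le\sqrt{\omega_{\max}}(1+U)\sup h_R$, and $\sup h_R<\infty$ was shown after \eqref{bounded-mobility}. Let
\[
C_R:=d\,\sqrt{\omega_{\max}}\,(1+U)\sup_{u>0}h_R(u),
\]
which depends only on $R$, $G$ and $\mathfrak a$. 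We then get $\rho_i^{n+1}-\rho_i^n\ge-C_R\tau\rho_i^{n+1}$. Therefore
\[
(1+C_R\tau)\,\underline\rho^{n+1}\ge\rho_i^n\ge\underline\rho^n,
\]
which is \eqref{interior-step}.

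\emph{Step 3: iteration.} Iterating \eqref{interior-step} and using $1+C_R\tau\le e^{C_R\tau}$ gives
\[
\underline\rho^n\ge(1+C_R\tau)^{-n}\underline\rho^0\ge e^{-C_Rt_n}\underline\rho^0,
\]
which is \eqref{global-interiority}.

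The main obstacle is Step 2. Assumption~\ref{continuous-data} only gives $h_R(u)\to0$, not $uh_R(u)$ bounded, so the flux alone cannot be bounded by $C\rho_i^{n+1}$. It must be absorbed by the diffusion term $\omega_{ij}(\rho_j^{n+1}-\rho_i^{n+1})$, which grows linearly in $u$. This also explains why the implicit Laplacian evaluated at $\rho^{n+1}$ is essential.
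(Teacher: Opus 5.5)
Your proof is correct and follows the paper's argument essentially step by step. You use a minimum principle at a vertex where $\rho^{n+1}$ is minimal, absorb the flux bound $\sqrt{\omega_{ij}}(1+u)h_R(u)$ into the implicit diffusion term $\omega_{ij}(u-1)$ via $h_R(u)\to0$ as $u\to\infty$, and then iterate. The only minor difference is in the positivity step: you argue at a minimizing vertex using $(\Delta_G\rho^{n+1})_i\ge0$, whereas the paper uses connectivity to pick a zero vertex adjacent to a positive one. Your version is equally valid and does not even need connectivity, because the left-hand side $-\rho_i^n/\tau$ is already strictly negative.
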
 
\begin{proof}
We first justify strict positivity.  If $\rho^n>0$ and
$\rho^{n+1}$ has a zero component, define
$Z=\{i:\rho_i^{n+1}=0\}, \; P=V\setminus Z$.
Mass conservation gives $P\ne\varnothing$.  Since $G$ is connected and
$Z\ne\varnothing$, there is an edge $(i,j)$ with $i\in Z$ and $j\in P$.
For every $k\sim i$, \eqref{boundary-B_revise} gives
$\bm B_{ik}(\rho^{n+1},p^n)=0$, where we set $p^n=\gradG\phi^n$.   Therefore
$\frac{\rho_i^{n+1}-\rho_i^n}{\tau} =0+(\DeltaG\rho^{n+1})_i =\sum_{k\sim i}\omega_{ik}\rho_k^{n+1}>0$.
The left-hand side is $-\rho_i^n/\tau<0$, whereas the
right-hand side is strictly positive, which is a contradiction.    Induction
from $\rho^0>0$ proves strict positivity.

For each fixed time level $n$, choose a vertex
$i_0:=i_0(n)$ at which $\rho^{n+1}$ attains its minimum:
$\rho_{i_0}^{n+1}=\underline\rho^{n+1}$.
For each neighbor $j\sim i_0$, set
$u_j=\frac{\rho_j^{n+1}}{\underline\rho^{n+1}}\ge1$.
The $i_0$ component of the density equation is
\begin{equation*}
\frac{\rho_{i_0}^{n+1}-\rho_{i_0}^n}{\tau}
 =(\divG\bm B(\rho^{n+1},p^n))_{i_0}
 +(\DeltaG\rho^{n+1})_{i_0}.
\end{equation*}
Thus every edge $(i_0,j)$ contributes one transport term and one diffusion
term.  By \eqref{graph-laplacian} the diffusion contribution of this edge is
\begin{equation*}
\omega_{i_0j}(\rho_j^{n+1}-\rho_{i_0}^{n+1})
 =\omega_{i_0j}\underline\rho^{n+1}(u_j-1)\ge 0.
\end{equation*}
By 
\eqref{graph-divergence} and \eqref{bounded-mobility}, the transport contribution of this edge
is
\begin{align*}
-\sqrt{\omega_{i_0j}}\, \bm B_{i_0j}(\rho^{n+1},p^n)
\ge-\sqrt{\omega_{i_0j}}
 |\bm B_{i_0j}(\rho^{n+1},p^n)|\ge-\sqrt{\omega_{i_0j}}\,
 \underline\rho^{n+1}(1+u_j)h_R(u_j).
\end{align*}
Therefore
\begin{align}
\omega_{i_0j}(\rho_j^{n+1}-\rho_{i_0}^{n+1})
-\sqrt{\omega_{i_0j}}\,\bm B_{i_0j}(\rho^{n+1},p^n)
 &\ge
 \underline\rho^{n+1}\bigl[
 \omega_{i_0j}(u_j-1)
 -\sqrt{\omega_{i_0j}}(1+u_j)h_R(u_j)
 \bigr]\notag\\
 &\ge   -\underline\rho^{n+1}
 \bigl[
 \sqrt{\omega_{i_0j}}(1+u_j)h_R(u_j)
 -\omega_{i_0j}(u_j-1)
 \bigr]_+,\label{edge-min}
\end{align}
where we write $a_+=\max\{a,0\}$ 
for a real number $a$, and use 
$-a\ge-a_+$. 
Define 
$C_R= \max_{i\in V}\sum_{j\sim i} \sup_{u\ge1} \bigl[ \sqrt{\omega_{ij}}(1+u)h_R(u)-\omega_{ij}(u-1) \bigr]_+$.
This constant is finite.  
Indeed, on every compact interval,  the term $[\sqrt{\omega_{ij}}(1+u)h_R(u)-\omega_{ij}(u-1) ]$ is bounded. On the other hand, since $h_R$ is bounded and tends to zero as $u\to\infty$, 
there exists $U_R\ge1$
such that
$\sqrt{\omega_{ij}}h_R(u)\le\omega_{ij}/2$ for every edge $(i,j)$ and every
$u\ge U_R$.   For $u\ge U_R$, 
\begin{align*}
 &\sqrt{\omega_{ij}}(1+u)h_R(u)-\omega_{ij}(u-1)\le\frac{\omega_{ij}}2(1+u)-\omega_{ij}(u-1)
 =\frac{3\omega_{ij}}2-\frac{\omega_{ij}}2u,
\end{align*}
which is bounded above on $[U_R,\infty)$.  Since the graph has finitely many
edges, we have $C_R<\infty$. 
Summing the edge estimates \eqref{edge-min} over
all neighbors of $i_0$ gives
\begin{align*}
 \frac{\rho_{i_0}^{n+1}-\rho_{i_0}^n}{\tau}
 &=\sum_{j\sim i_0}\bigl[
 \omega_{i_0j}(\rho_j^{n+1}-\rho_{i_0}^{n+1})
 -\sqrt{\omega_{i_0j}}\,\bm B_{i_0j}(\rho^{n+1},p^n)\bigr]\\
 &\ge
 -\underline\rho^{n+1}
 \sum_{j\sim i_0}
 \bigl[
 \sqrt{\omega_{i_0j}}(1+u_j)h_R(u_j)
 -\omega_{i_0j}(u_j-1)
 \bigr]_+\ge -C_R\underline\rho^{n+1}.
\end{align*}
Multiplying by $\tau$ and using $\rho_{i_0}^{n+1}=\underline\rho^{n+1}$ gives
$ 
\underline\rho^{n+1}\ge \rho_{i_0}^n-C_R\tau\underline\rho^{n+1}
 \ge \underline\rho^n-C_R\tau\underline\rho^{n+1}.
$ 
Rearranging gives \eqref{interior-step}.  Iteration and
$(1+C_R\tau)^n\le e^{C_Rt_n}$ prove
\eqref{global-interiority}.
\end{proof}
\begin{proof}[Proof of uniqueness and \eqref{uniform-bounds}
in Theorem~\ref{general-results}]
Uniqueness follows from Lemma~\ref{disc-unique}.
Lemma~\ref{val-grad} gives the bound for $\phi^n$ and
$\gradG\phi^n$, uniformly in $N$ and $\tau$.  Apply
Lemma~\ref{q-interior} with $R=R_\phi$, 
where $R_\phi=2\sqrt{\omega_{\max}}C_\phi$ is given in \eqref{val-bound}. 
It follows that
$ 
 \min_i\rho_i^n
 \ge\epsilon e^{-C_{R_\phi}t_n}
 \ge\epsilon e^{-C_{R_\phi}T}=:\delta_T>0.
$ 
This proves the density estimate.  Together with Lemma~\ref{val-grad}, it gives
\eqref{uniform-bounds}.  
\end{proof}

\subsection{First-order convergence}\label{sec4.2}
In this subsection, we prove the error estimate \eqref{first-rate} in 
Theorem~\ref{general-results}. The proof relies on the regularity of the exact
solution (see Proposition~\ref{cont-sol}) and the timestep-uniform estimates in  \eqref{uniform-bounds} proved in
Section \ref{sec4.1}. 
We begin by estimating the consistency residual obtained by inserting the exact solution into the discrete scheme. 
\begin{lemma}
Let the conditions of Theorem~\ref{general-results} hold. Set
$\Phi^n=\phi(t_n)$, $R^n=\rho(t_n)$.  Then
\begin{equation}\label{exact-res}
\begin{aligned}
 \frac{\Phi^{n+1}-\Phi^n}{\tau}
 &=\bm H(R^{n+1},\gradG\Phi^n)-\DeltaG\Phi^n+r_\phi^n,
 \\
 \frac{R^{n+1}-R^n}{\tau}
 &=\divG\bm B(R^{n+1},\gradG\Phi^n)
 +\DeltaG R^{n+1}+r_\rho^n, 
\end{aligned}
\end{equation}
where the residuals $r^n_{\phi},r^n_{\rho}$ satisfy 
\begin{equation}\label{local-res}
 \max_n\bigl(\norm{r_\phi^n}+\norm{r_\rho^n}\bigr)\le C\tau,\quad
 \sum_{i=1}^d r_{\rho,i}^n=0,\; 0\le n<N.
\end{equation}
\end{lemma}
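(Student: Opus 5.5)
The residuals are \emph{defined} by \eqref{exact-res}, i.e.
\[
r_\phi^n:=\frac{\Phi^{n+1}-\Phi^n}{\tau}-\bm H(R^{n+1},\gradG\Phi^n)+\DeltaG\Phi^n,\qquad
r_\rho^n:=\frac{R^{n+1}-R^n}{\tau}-\divG\bm B(R^{n+1},\gradG\Phi^n)-\DeltaG R^{n+1}.
\]
So the task is only to estimate them. We subtract the continuous equations \eqref{mfg}, evaluated at a convenient time, and Taylor expand using the $\mathcal C^2$ regularity of Proposition~\ref{cont-sol}.

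For the value residual we use the equation at $t=t_n$: $\dot\phi(t_n)=\bm H(R^n,\gradG\Phi^n)-\DeltaG\Phi^n$. This gives
\[
r_\phi^n=\Bigl(\frac{\Phi^{n+1}-\Phi^n}{\tau}-\dot\phi(t_n)\Bigr)+\bigl(\bm H(R^n,\gradG\Phi^n)-\bm H(R^{n+1},\gradG\Phi^n)\bigr).
\]
By Taylor's theorem with integral remainder, the first bracket is bounded by $\tfrac{\tau}{2}\norm{\ddot\phi}_{\mathcal C^0}\le C_*\tau$. For the second bracket, \eqref{cont-reg} shows that all points $(R^n,\gradG\Phi^n)$ and the segments joining $R^n$ to $R^{n+1}$ lie in the compact set
\[
K:=\cP_{\delta_*}\times\{p:\norm{p}_{\ell^\infty(E)}\le 2\sqrt{\omega_{\max}}C_*\}.
\]
Indeed $\cP_{\delta_*}$ is convex, so the segments stay inside it. Since $\bm H\in\mathcal C^1$, the difference is bounded by $\sup_K\norm{D_\rho\bm H}\,\norm{R^{n+1}-R^n}\le C\tau\norm{\dot\rho}_{\mathcal C^0}$.

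The density residual is handled the same way, but we use the equation at $t=t_{n+1}$ so that the Laplacian term matches: $\dot\rho(t_{n+1})=\divG\bm B(R^{n+1},\gradG\Phi^{n+1})+\DeltaG R^{n+1}$. Then
\[
r_\rho^n=\Bigl(\frac{R^{n+1}-R^n}{\tau}-\dot\rho(t_{n+1})\Bigr)+\divG\bigl(\bm B(R^{n+1},\gradG\Phi^{n+1})-\bm B(R^{n+1},\gradG\Phi^n)\bigr).
\]
The first term is $O(\tau)$ by the backward Taylor expansion. For the second term, $\divG$ is a fixed linear map on a finite-dimensional space and $\bm B$ is $\mathcal C^1$ on $K$. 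It is therefore bounded by $C\norm{\gradG(\Phi^{n+1}-\Phi^n)}_E\le C\tau\norm{\dot\phi}_{\mathcal C^0}$. Here we use that $\gradG$ is bounded and that the segment between $\gradG\Phi^n$ and $\gradG\Phi^{n+1}$ stays in the convex $p$-ball of $K$.

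The zero-sum property follows directly. We have $\sum_i(\divG q)_i=-\ip{q}{\gradG\bm1}_E=0$ by \eqref{graph-ibp}, and similarly $\sum_i(\DeltaG u)_i=0$. Also $\sum_iR_i^{n+1}=\sum_iR_i^n=1$, because $\rho(t)\in\cP^\circ(G)$. Summing the definition of $r_\rho^n$ over vertices therefore gives $0$.

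The only point needing care is the uniformity of the constants. The relevant point is that $\bm H$ and $\bm B$ are merely $\mathcal C^1$ on the \emph{open} simplex, and their derivatives may blow up at $\partial\cP(G)$. This is exactly where the lower bound $\delta_*$ and the convexity of $\cP_{\delta_*}$ are used: they ensure that the mean value theorem is applied only inside the fixed compact set $K$, so the resulting constant $C$ depends only on $C_*$, $\delta_*$, $G$ and the data, and not on $\tau$ or $n$.
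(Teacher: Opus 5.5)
Your proposal is correct and follows the paper's proof essentially step for step. Like the paper, you take the exact value equation at $t_n$ and the exact density equation at $t_{n+1}$. Each residual then splits into an $O(\tau)$ Taylor remainder plus a difference of $\bm H$ or $\bm B$ on a compact subset of $\cP^\circ(G)\times\bbS^{d\times d}$. The zero-sum property comes from mass conservation together with $\ip{\divG q}{\bm1}=0$. Your explicit choice of the convex set $\cP_{\delta_*}\times\{p:\norm{p}_{\ell^\infty(E)}\le 2\sqrt{\omega_{\max}}C_*\}$ is a slightly more careful form of the paper's ``Lipschitz constant on a compact neighborhood of the exact trajectory,'' because it guarantees that the mean-value segments stay where the derivatives of $\bm H$ and $\bm B$ are bounded.
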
 
\begin{proof}
Write $P^n=\gradG\Phi^n$.  The fundamental theorem of
calculus and the exact value equation at $t_n$ give
\[
\begin{aligned}
 &\quad \frac{\Phi^{n+1}-\Phi^n}{\tau}
 =\dot\phi(t_n)
 +\frac1\tau\int_{t_n}^{t_{n+1}}
 \bigl[\dot\phi(s)-\dot\phi(t_n)\bigr]\,\mathrm{d}s\\
 &=\bm H(R^n,P^n)-\DeltaG\Phi^n
 +\frac1\tau\int_{t_n}^{t_{n+1}}
 \bigl[\dot\phi(s)-\dot\phi(t_n)\bigr]\,\mathrm{d}s=\bm H(R^{n+1},P^n)-\DeltaG\Phi^n+r_\phi^n,
\end{aligned}
\]
where
\begin{align}
 r_\phi^n
 :=\frac1\tau\int_{t_n}^{t_{n+1}}
 \bigl[\dot\phi(s)-\dot\phi(t_n)\bigr]\,\mathrm{d}s
 +\bm H(R^n,P^n)-\bm H(R^{n+1},P^n).
 \label{phi-defect-int}
\end{align}
Let $L_H$ be a Lipschitz constant of $\bm H$ on a compact neighborhood of the
exact trajectory.  Since
\begin{equation}\label{time-increments}
 \norm{R^{n+1}-R^n}\le\tau\norm{\dot\rho}_{L^\infty(0,T)},
 \qquad
 \norm{P^{n+1}-P^n}_E
 \le C\tau\norm{\dot\phi}_{L^\infty(0,T)},
\end{equation} 
 \eqref{phi-defect-int} yields
$\norm{r_\phi^n} \le\frac\tau2\norm{\ddot\phi}_{L^\infty(0,T)} +L_H\tau\norm{\dot\rho}_{L^\infty(0,T)}$.
For the density equation, we use the exact equation at $t_{n+1}$ and obtain
\[
\begin{aligned}
 \frac{R^{n+1}-R^n}{\tau}
 &=\dot\rho(t_{n+1})
 +\frac1\tau\int_{t_n}^{t_{n+1}}
 \bigl[\dot\rho(s)-\dot\rho(t_{n+1})\bigr]\,\mathrm{d}s\\
 &=\divG\bm B(R^{n+1},P^{n+1})+\DeltaG R^{n+1}
 +\frac1\tau\int_{t_n}^{t_{n+1}}
 \bigl[\dot\rho(s)-\dot\rho(t_{n+1})\bigr]\,\mathrm{d}s\\
 &=\divG\bm B(R^{n+1},P^n)+\DeltaG R^{n+1}+r_\rho^n,
\end{aligned}
\]
where
\begin{align}
 r_\rho^n
 :=\frac1\tau\int_{t_n}^{t_{n+1}}
 \bigl[\dot\rho(s)-\dot\rho(t_{n+1})\bigr]\,\mathrm{d}s+\divG\bigl[
 \bm B(R^{n+1},P^{n+1})-\bm B(R^{n+1},P^n)\bigr].
 \label{rho-defect-int}
\end{align}
Both terms in
\eqref{rho-defect-int} have zero sum.
Let $L_B$ be a Lipschitz constant of $\bm B$ on the same compact.  Equations
\eqref{time-increments} and
\eqref{rho-defect-int} imply
$ 
\norm{r_\rho^n}
 \le\frac\tau2\norm{\ddot\rho}_{L^\infty(0,T)}
 +CL_B\tau\norm{\dot\phi}_{L^\infty(0,T)}.
$ This proves \eqref{local-res}.
\end{proof}
The following graph Poincar\'e inequality will be used repeatedly:
\begin{equation}\label{Pincare-ineq}
 \norm{v-\overline v\bm1}
 \le C_P\norm{\gradG v}_E,
 \qquad
 \overline v=\frac1d\sum_i v_i,
\end{equation}
which follows from connectedness of $G$.
Indeed, connectedness gives
$\ker(\gradG)=\operatorname{span}\{\bm1\}$; hence $\gradG$ is injective on
$\bbR_0^d$, and equivalence of norms on this finite-dimensional space yields
\eqref{Pincare-ineq}.
\begin{proof}[Proof of
Theorem~\ref{general-results}] The uniqueness and timestep-uniform estimates have been established in
Section~\ref{sec4.1}.
It remains to prove the first-order error estimate \eqref{first-rate}.  
Set
$e_\phi^n=\phi(t_n)-\phi^n, \; e_\rho^n=\rho(t_n)-\rho^n, \; e_p^n=\gradG e_\phi^n$.
The exact trajectory is contained in a compact subset of
$\cP^\circ(G)\times \bbS^{d\times d}$ by Proposition~\ref{cont-sol}. The uniform estimates \eqref{uniform-bounds}
show that the numerical trajectory remains in a compact subset,
independent of $\tau$.
Hence, for all sufficiently small $\tau$, both trajectories are
contained in a common compact set 
$
K_*\subset
\mathcal P^\circ(G)\times\mathbb S^{d\times d},
$ 
which is independent of $\tau$.
Hence the coefficients
$\bm H$ and $\bm B$ are uniformly Lipschitz on $K_*$. Moreover, the differential Lasry--Lions condition
\eqref{differential-LL}, together with Remark \ref{remark1}, yields the quantitative
estimate \eqref{strong-LL-ineq} on $K_*$ with a constant
$c_{K_*}>0$.  
 Let $r_\phi^n,r_\rho^n$ be the residuals in
\eqref{exact-res}.  
Define
\begin{align*}
 \delta\bm H^n
 &:=\bm H(R^{n+1},\gradG\Phi^n)
 -\bm H(\rho^{n+1},\gradG\phi^n),
 \;
 \delta\bm B^n
 :=\bm B(R^{n+1},\gradG\Phi^n)
 -\bm B(\rho^{n+1},\gradG\phi^n).
\end{align*}
Subtracting the numerical scheme from
\eqref{exact-res} gives
\begin{subequations}
\begin{align}
 \frac{e_\phi^{n+1}-e_\phi^n}{\tau}
 &=\delta\bm H^n-\DeltaG e_\phi^n+r_\phi^n,
 \label{err-phi}\\
 \frac{e_\rho^{n+1}-e_\rho^n}{\tau}
 &=\divG\delta\bm B^n+\DeltaG e_\rho^{n+1}+r_\rho^n.
 \label{rho-error}
\end{align}
\end{subequations}
Taking the inner product of \eqref{err-phi} with
$e_\rho^{n+1}$ and of \eqref{rho-error} with
$e_\phi^n$, and adding the resulting identities, we obtain
\begin{align}
 \ip{e_\phi^{n+1}}{e_\rho^{n+1}}
 -\ip{e_\phi^n}{e_\rho^n}&=\tau\ip{\delta\bm H^n}{e_\rho^{n+1}}
 -\tau\ip{\delta\bm B^n}{e_p^n}_E
 +\tau\ip{r_\phi^n}{e_\rho^{n+1}}
 +\tau\ip{e_\phi^n}{r_\rho^n}\notag\\
 &
\le-c_{K_*}\tau\norm{e_\rho^{n+1}}^2
-c_{K_*}\tau\norm{e_p^n}_E^2
 +\tau\ip{r_\phi^n}{e_\rho^{n+1}}
 +\tau\ip{e_\phi^n}{r_\rho^n},\label{error-identity}
\end{align} 
where we use 
\eqref{strong-LL-ineq} on the compact set $K_*$. 
By $\sum_i r_{\rho,i}^n=0$ and the graph Poincar\'e inequality \eqref{Pincare-ineq},
\begin{equation*}
\bigl|\ip{e_\phi^n}{r_\rho^n}\bigr|
 =\bigl|\ip{e_\phi^n-\overline e_\phi^n\bm1}{r_\rho^n}\bigr|
 \le C_P\norm{e_p^n}_E\norm{r_\rho^n}.
\end{equation*}
Moreover, $e_\rho^0=0$, while \eqref{terminal-mono} gives
\begin{equation*}
 \ip{e_\phi^N}{e_\rho^N}
 =\ip{g(\rho(T))-g(\rho^N)}
 {\rho(T)-\rho^N}\ge0.
\end{equation*}
Sum \eqref{error-identity} from $n=0$ to $N-1$.  The left-hand
side telescopes.  Using $e_\rho^0=0$ and
the preceding terminal inequality, we obtain
\begin{align*}
 &c_{K_*} \tau\sum_{n=0}^{N-1}\norm{e_\rho^{n+1}}^2
 +c_{K_*}\tau\sum_{n=0}^{N-1}\norm{e_p^n}_E^2\le\tau\sum_{n=0}^{N-1}
 \norm{r_\phi^n}\norm{e_\rho^{n+1}}
 +C_P\tau\sum_{n=0}^{N-1}
 \norm{r_\rho^n}\norm{e_p^n}_E,
\end{align*}
which together with Young's inequality and  
\eqref{local-res} gives
\begin{equation}\label{error-energy-bound}
 \tau\sum_{n=0}^{N-1}
 \bigl(\norm{e_\rho^{n+1}}^2+\norm{e_p^n}_E^2\bigr)
 \le C_T\tau^2.
\end{equation}

It remains to estimate bounds $ (\max\limits_{0\le n\le N}\norm{e_\phi^n}\vee \max\limits_{0\le n\le N}\norm{e_\rho^n})\le C_T\tau$.  
Taking the inner product of \eqref{rho-error} with
$e_\rho^{n+1}$, and using \eqref{graph-ibp} and the polarization identity $2\ip{a-b}{a}=\norm{a}^2-\norm{b}^2+\norm{a-b}^2$ for $a,b\in\mathbb R^d,$ we obtain
\begin{align}\label{equation1}
 &\frac1{2\tau}\bigl(
 \norm{e_\rho^{n+1}}^2-\norm{e_\rho^n}^2
 +\norm{e_\rho^{n+1}-e_\rho^n}^2\bigr)
 +\norm{\gradG e_\rho^{n+1}}_E^2=-\ip{\delta\bm B^n}{\gradG e_\rho^{n+1}}_E
 +\ip{r_\rho^n}{e_\rho^{n+1}}. 
\end{align}
For the term $
-\ip{\delta\bm B^n}{\gradG e_\rho^{n+1}}_E$, we have 
\begin{align*}
|-\ip{\delta\bm B^n}{\gradG e_\rho^{n+1}}_E|&=|\ip{\divG\delta\bm B^n}{e_\rho^{n+1}}|\leq  \norm{\divG\delta\bm B^n}\|e^{n+1}_{\rho}\|
\\&\le C\norm{\delta\bm B^n}_E\|e^{n+1}_{\rho}\|
\le
C\bigl(
\norm{e_\rho^{n+1}}+\norm{e_p^n}_E
\bigr)\|e^{n+1}_{\rho}\|,
\end{align*}
where we use \eqref{graph-ibp}, the boundedness of the operator $\divG$, and the Lipschitz continuity of $\bm B$ on $K_*$. Dropping the nonnegative terms on the left-hand side of \eqref{equation1}, we obtain  
\begin{align*}
 \frac1{2\tau}\bigl(\norm{e_\rho^{n+1}}^2
 -\norm{e_\rho^n}^2\bigr)
 &\le C\bigl(\norm{e_\rho^{n+1}}
 +\norm{e_p^n}_E\bigr)\norm{e_\rho^{n+1}}+\norm{r_\rho^n}\norm{e_\rho^{n+1}}.  
\end{align*} Apply
Young's inequality
to the products
$\norm{e_p^n}_E\norm{e_\rho^{n+1}}$ and
$\norm{r_\rho^n}\norm{e_\rho^{n+1}}$, and then move the resulting multiples of
$\norm{e_\rho^{n+1}}^2$ to the left-hand side of the inequality.  It therefore gives a constant $C_1$, independent of $n$ and
$\tau$, such that
$(1-C_1\tau)\norm{e_\rho^{n+1}}^2 \le\norm{e_\rho^n}^2 +C_1\tau\norm{e_p^n}_E^2 +C_1\tau\norm{r_\rho^n}^2$.
Choose $\tau_0$ so that $C_1\tau_0\le1/2$.  Since
$(1-C_1\tau)^{-1}\le1+2C_1\tau$ for $\tau\le\tau_0$, we obtain
$$\norm{e_\rho^{n+1}}^2 \le(1+C\tau)\norm{e_\rho^n}^2 +C\tau\norm{e_p^n}_E^2 +C\tau\norm{r_\rho^n}^2.$$
Iterating this recursion and using $e_\rho^0=0$ yields
\begin{align*}
 \norm{e_\rho^n}^2
 &\le C\tau\sum_{k=0}^{n-1}(1+C\tau)^{n-1-k}
 \bigl(\norm{e_p^k}_E^2+\norm{r_\rho^k}^2\bigr)\le Ce^{CT}\tau\sum_{k=0}^{N-1}
 \bigl(\norm{e_p^k}_E^2+\norm{r_\rho^k}^2\bigr)
 \le C_T\tau^2,
\end{align*}
where the last inequality follows from \eqref{error-energy-bound} and \eqref{local-res}.  Consequently,
\begin{equation}\label{rho-max}
 \max_{0\le n\le N}\norm{e_\rho^n}\le C_T\tau.
\end{equation}

Finally, the value error equation is equivalent to
\begin{equation}\label{phi-resolve}
 (I-\tau\DeltaG)e_\phi^n
 =e_\phi^{n+1}-\tau\delta\bm H^n-\tau r_\phi^n.
\end{equation}
Since $-\DeltaG$ is symmetric positive semidefinite, every eigenvalue of
$I-\tau\DeltaG$ is at least one.  Hence
$\norm{(I-\tau\DeltaG)^{-1}}_{\mathcal L(\bbR^d)}\le1$.
Applying this resolvent to \eqref{phi-resolve} and using
the Lipschitz property of $\bm H$ gives
\begin{equation}\label{phi-rec}
 \norm{e_\phi^n}
 \le\norm{e_\phi^{n+1}}
 +L\tau\bigl(\norm{e_\rho^{n+1}}+\norm{e_p^n}_E\bigr)
 +\tau\norm{r_\phi^n}.
\end{equation}
Because $g$ is Lipschitz on the compact simplex,
$ 
 \norm{e_\phi^N}
 =\norm{g(R^N)-g(\rho^N)}
 \le L_g\norm{e_\rho^N}\le C_T\tau.
$ 
Iterating \eqref{phi-rec} backward from $N$ to $n$ gives
\begin{align}
 \norm{e_\phi^n}
 &\le\norm{e_\phi^N}
 +L\tau\sum_{k=n}^{N-1}\norm{e_\rho^{k+1}}
 +L\tau\sum_{k=n}^{N-1}\norm{e_p^k}_E
 +\tau\sum_{k=n}^{N-1}\norm{r_\phi^k}\leq C_T\tau,
 \label{phi-back}
\end{align}where we use 
 \eqref{local-res}, \eqref{rho-max}, the 
Cauchy--Schwarz inequality, and \eqref{error-energy-bound}.  
This, together with 
equations \eqref{error-energy-bound} and 
\eqref{rho-max} proves \eqref{first-rate}.
\end{proof}

\section{Proof of Theorem~\ref{potential-sol}}\label{sec5}
This section proves Theorem~\ref{potential-sol}.  In Subsection~\ref{var-part}, we prove the existence,
uniqueness, and interiority of the minimizer and state the Legendre relations
needed for the KKT system.  In Subsection~\ref{kkt-part}, we identify the KKT
conditions with the potential MFG scheme and complete the proof of
Theorem~\ref{potential-sol}.

\subsection{Variational characterization}\label{var-part}

We begin by constructing an interior admissible path with finite action. 
\begin{lemma}
\label{heat-competitor} 
Let Assumptions~\ref{pot-struct} and
\ref{variational-data} hold, and let $\rho_0\in\cP^\circ(G)$. 
Then there exists at
least one pair $(\rho,m)$ satisfying \eqref{continuity}, $\rho^0=\rho_0$,
$\rho^n\in\cP^{\circ}(G)$, and $\mathcal A_\tau(\rho,m)<\infty$.
\end{lemma}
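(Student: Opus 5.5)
The competitor is the implicit Euler discretization of the graph heat equation, with zero effective flux. Set $\rho^0=\rho_0$ and, for $0\le n<N$,
\[
\rho^{n+1}=(I-\tau\DeltaG)^{-1}\rho^n ,\qquad m^{n+1}=-\gradG\rho^{n+1}.
\]
This gives $w^{n+1}=m^{n+1}+\gradG\rho^{n+1}=0$. The continuity constraint \eqref{continuity} then reads
\[
\frac{\rho^{n+1}-\rho^n}{\tau}-\divG\gradG\rho^{n+1}=\frac{\rho^{n+1}-\rho^n}{\tau}-\DeltaG\rho^{n+1}=0,
\]
which holds by construction. The operator $I-\tau\DeltaG$ is invertible because $-\DeltaG$ is symmetric positive semidefinite, so every eigenvalue of $I-\tau\DeltaG$ is at least one. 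Hence the sequence is well defined.

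Next I check that every $\rho^n$ lies in $\cP^\circ(G)$, by induction on $n$.

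\textbf{Mass.} Summing the equation over vertices and using $\sum_i(\DeltaG u)_i=0$ gives $\sum_i\rho_i^{n+1}=\sum_i\rho_i^n=1$.

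\textbf{Positivity.} Suppose $\rho^n>0$ and let $i_0$ be a vertex where $\rho^{n+1}$ attains its minimum. Then $(\DeltaG\rho^{n+1})_{i_0}\ge0$, so
\[
\rho^{n+1}_{i_0}=\rho^n_{i_0}+\tau(\DeltaG\rho^{n+1})_{i_0}\ge\rho^n_{i_0}>0 .
\]
This is the same minimum-principle argument as in Lemma~\ref{q-interior}, now without the transport term, so $\min_i\rho_i^{n+1}\ge\min_i\rho_i^n\ge\min_i(\rho_0)_i>0$. Alternatively, one can note that $(I-\tau\DeltaG)$ is an M-matrix with a nonnegative inverse whose rows sum to one.

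Finally, the action is finite. Each running term is $\cL(\rho^{n+1},0)-\cF(\rho^{n+1})$. Here $\cL(\rho^{n+1},0)<\infty$ by \eqref{zero-action}, since $\rho^{n+1}\in\cP^\circ(G)$. The term $\cF(\rho^{n+1})$ is finite because $\cF\in\mathcal C^2(\cP(G))$ is real-valued on the compact simplex, and $\cU_T(\rho^N)$ is finite for the same reason. A finite sum of finite terms gives $\mathcal A_\tau(\rho,m)<\infty$.

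There is no genuine obstacle. The only point needing care is that \eqref{zero-action} applies only at interior densities, so the positivity of the implicit heat step is the step that must be checked, and the discrete minimum principle above settles it.
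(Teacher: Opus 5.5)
Your proposal is correct and follows essentially the same route as the paper's proof. Both use the implicit Euler heat path $(I-\tau\DeltaG)\rho^{n+1}=\rho^n$ with $m^{n+1}=-\gradG\rho^{n+1}$, so that $w^{n+1}=0$, and both rely on invertibility of $I-\tau\DeltaG$, mass conservation, the discrete minimum principle for positivity, and \eqref{zero-action} together with continuity of $\cF,\cU_T$ to get finite action.
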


\begin{proof}
It suffices to exhibit a single admissible path with finite action. 
 Starting from the prescribed initial density $\rho^0=\rho_0$, define
recursively
\begin{equation}\label{heat-path}
 (I-\tau\DeltaG)\rho^{n+1}=\rho^n,
 \qquad
 m^{n+1}=-\gradG\rho^{n+1}.
\end{equation}
 Since \(-\Delta_G\) is symmetric positive semidefinite because
$\ip{-\DeltaG u}{u}=\norm{\gradG u}_E^2\ge0$, its eigenvalues $\lambda$ are nonnegative. Hence the eigenvalues of \(I-\tau\Delta_G=I+\tau(-\Delta_G)\) are \(1+\tau\lambda\ge1\), and the matrix \(I-\tau\Delta_G\) is invertible. 
The first equation in \eqref{heat-path} is equivalent to
$({\rho^{n+1}-\rho^n})/{\tau}-\DeltaG\rho^{n+1}=0$.
Since $m^{n+1}=-\gradG\rho^{n+1}$ and $\DeltaG=\divG\gradG$, we have 
\begin{align}\label{eq-1}
\frac{\rho^{n+1}-\rho^n}{\tau} +\divG m^{n+1} = \frac{\rho^{n+1}-\rho^n}{\tau} -\DeltaG\rho^{n+1}=0.
\end{align}
Thus the constraint \eqref{continuity} holds and
$w^{n+1}=m^{n+1}+\gradG\rho^{n+1}=0$.

We next check that the densities $\rho^n$ remain in the open simplex $\mathcal P^{\circ}(G)$.   Assume
$\rho^n>0$, and let $i_0$ be a vertex where $\rho^{n+1}$ attains its minimum.
For every neighbor $j$ of $i_0$,
$\rho_j^{n+1}-\rho_{i_0}^{n+1}\ge0$.  Therefore
$(\DeltaG\rho^{n+1})_{i_0} =\sum_{j\sim i_0}\omega_{i_0j} (\rho_j^{n+1}-\rho_{i_0}^{n+1})\ge0$.
Using \eqref{eq-1} at the vertex $i_0$ gives
$\rho_{i_0}^{n+1} =\rho_{i_0}^n+\tau(\DeltaG\rho^{n+1})_{i_0} \ge\rho_{i_0}^n>0$.
Since the minimum component of $\rho^{n+1}$ is positive, every component of
$\rho^{n+1}$ is positive.  Equation~\eqref{eq-1} also preserves the sum of
the components.  Induction therefore proves
$\rho^n\in\cP^\circ(G)$ for all $n$.

Finally, $w^{n+1}=0$ and \eqref{zero-action} give
$\cL(\rho^{n+1},w^{n+1})=\cL(\rho^{n+1},0)<\infty$ for every $n$.
The remaining terms $-\cF(\rho^{n+1})$ and $\cU_T(\rho^N)$ are finite because
$\cF$ and $\cU_T$ are continuous on the compact simplex.  Hence
$\mathcal A_\tau(\rho,m)<\infty$. We finish the proof. 
\end{proof}

Lemma \ref{heat-competitor} constructs an admissible pair with finite action. Hence the infimum in \eqref{min-prob} is strictly below $+\infty$; the lower bound needed to exclude $-\infty$ is proved next. 
We now establish existence and uniqueness of the minimizer.  
\begin{lemma}
\label{minimizer-lemma}
Let $\rho_0\in\cP^\circ(G)$.  Under
Assumptions~\ref{pot-struct} and \ref{variational-data}, the minimization
problem \eqref{min-prob} admits a unique minimizer in
$\mathcal P(G)^{N+1}\times (\mathbb S^{d\times d})^N$.
\end{lemma}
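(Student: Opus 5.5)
We use the direct method of the calculus of variations, followed by a strict-convexity argument. By Lemma~\ref{heat-competitor} the infimum $I_\tau$ of \eqref{min-prob} satisfies $I_\tau<\infty$.

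\emph{Step 1: lower bound.} On the compact set $\cP(G)^{N+1}$ the continuous functions $-\cF$ and $\cU_T$ are bounded. Since $\cL$ is proper, lower semicontinuous and convex, it admits an affine minorant $\cL(\rho,w)\ge \ip{a}{\rho}+\ip{b}{w}_E+c$. Combining this with the coercivity \eqref{coercivity}, we expect $\cL$ to be bounded below on $\cP(G)\times\bbS^{d\times d}$. Indeed, we argue as follows. Any sublevel set $\{\cL\le A\}$ with $A$ finite lies in $\cP(G)\times\{\norm{w}_E\le R_A\}$, which is compact. A lower semicontinuous function attains its infimum on a nonempty compact sublevel set, so $\inf\cL>-\infty$. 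Hence $I_\tau>-\infty$.

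\emph{Step 2: compactness.} Take a minimizing sequence $(\rho_k,m_k)$ with $\mathcal A_\tau(\rho_k,m_k)\le I_\tau+1$. Because each term of the action is bounded below, every individual summand satisfies $\cL(\rho_k^{n+1},w_k^{n+1})\le A$ for some fixed $A$. Then \eqref{coercivity} gives $\norm{w_k^{n+1}}_E\le R_A$. Since $\norm{\gradG\rho_k^{n+1}}_E$ is bounded on the simplex, $m_k^{n+1}=w_k^{n+1}-\gradG\rho_k^{n+1}$ is also bounded. We therefore extract a convergent subsequence $(\rho_k,m_k)\to(\rho_*,m_*)$. The constraints $\rho^0=\rho_0$ and $\Gamma^n=0$ are linear and closed, and $\cP(G)$ is closed, so the limit is admissible. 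The map $(\rho,m)\mapsto w$ is continuous, so lower semicontinuity of $\cL$ together with continuity of $\cF$ and $\cU_T$ gives $\mathcal A_\tau(\rho_*,m_*)\le\liminf_k \mathcal A_\tau(\rho_k,m_k)=I_\tau$. Thus $(\rho_*,m_*)$ is a minimizer.

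\emph{Step 3: uniqueness.} The admissible set is convex, being the intersection of an affine set with a product of simplices. The action is convex: $\cL$ is jointly convex in $(\rho,w)$ and $w$ is affine in $(\rho,m)$; $-\cF$ is strictly convex; and $\cU_T$ is convex. Suppose two minimizers $(\rho,m)$ and $(\tilde\rho,\tilde m)$ exist, and consider their midpoint. Each convex term of the action is at most the average of its values at the two minimizers, and the strictly convex term $-\tau\sum_n\cF(\rho^{n+1})$ is strictly smaller than the average unless $\rho^{n+1}=\tilde\rho^{n+1}$ for all $n$. Since the midpoint cannot have action below $I_\tau$, we must have $\rho=\tilde\rho$, using also $\rho^0=\tilde\rho^0=\rho_0$. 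With equal densities, the two fluxes give $w$ and $\tilde w$ with $\cL(\rho^{n+1},\cdot)$ finite at both. Minimality again forces equality in the convexity inequality for $\cL(\rho^{n+1},\cdot)$, and strict convexity of $\cL$ in $w$ on its finite domain then yields $w=\tilde w$. Hence $m=\tilde m$.

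\emph{Main obstacle.} The delicate point is the lower bound in Step 1: $\cL$ may take the value $+\infty$ at the boundary, and we need the uniform bound from below on the compact sublevel sets. I would handle this through the lower-semicontinuity and compactness argument given there. A secondary subtlety is that the strict convexity in Step 3 must be applied only where $\cL$ is finite, which holds at both minimizers because their action is finite.
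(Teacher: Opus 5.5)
Your proposal is correct and follows the paper's proof essentially step for step: finiteness of $I_\tau$ from Lemma~\ref{heat-competitor}, a lower bound for $\cL$ from lower semicontinuity, properness and \eqref{coercivity}, compactness of a minimizing sequence via \eqref{coercivity}, and uniqueness via the midpoint argument using strict convexity of $-\cF$ and then strict convexity of $\cL$ in $w$. The only difference is cosmetic and lies in Step~1: you use compactness of a nonempty sublevel set $\{\cL\le A\}$, whereas the paper takes $A=0$ and splits into the compact set $\cP(G)\times\{\norm{w}_E\le R_0\}$ and its complement, where $\cL>0$; your affine-minorant remark is not needed in either version.
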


\begin{proof}
Let $\mathcal K_\tau$ be the feasible set, defined by \[
 \mathcal K_\tau
 :=
 \Bigl\{
 (\rho,m)\in
 \cP(G)^{N+1}\times(\bbS^{d\times d})^N:
 \rho^0=\rho_0,\ 
 \frac{\rho^{n+1}-\rho^n}{\tau}
 +\divG m^{n+1}=0,\ 
 0\le n<N
 \Bigr\}.
\]
We write $\rho=(\rho^0,\ldots,\rho^N)$ and $m=(m^1,\ldots,m^N)$, where $\rho^n\in\cP(G)$ and $m^n\in \bbS^{d\times d}$.  
Put
$I_\tau=\inf_{(\rho,m)\in\mathcal K_\tau}\mathcal A_\tau(\rho,m)$. We first show that \(I_\tau\) is finite. On one hand, 
by Lemma~\ref{heat-competitor}, there is
$(\bar\rho,\bar m)\in\mathcal K_\tau$ with
$\mathcal A_\tau(\bar\rho,\bar m)<\infty$.  Hence  
$I_\tau\le\mathcal A_\tau(\bar\rho,\bar m)<\infty$. We next
prove the lower bound $I_\tau>-\infty$.  
The continuous functions $-\cF$ and $\cU_T$ are bounded below on the compact
simplex.  The Lagrangian $\cL$ is also bounded below.  Indeed, take $R_0$ from
\eqref{coercivity} with $A=0$.  Then
$\cL(\rho,w)>0$ for $\|w\|_E>R_0$.  We claim that $\cL$ is bounded below on
the compact set
$K_0=\cP(G)\times\{w:\|w\|_E\le R_0\}$.  Otherwise, there would be
$(\rho_k,w_k)\in K_0$ such that $\cL(\rho_k,w_k)\to-\infty$.
After extracting a subsequence, $(\rho_k,w_k)\to(\rho,w)\in K_0$, and lower
semicontinuity would give
$\cL(\rho,w)\le\liminf_k\cL(\rho_k,w_k)=-\infty$, which is contrary to properness.
Hence $\cL$ has a finite lower bound on $K_0$, and it is positive outside
$K_0$.  Thus the action is bounded below on
$\mathcal K_\tau$, and $I_\tau>-\infty$.

Choose a minimizing sequence
$(\rho_k,m_k)\subset\mathcal K_\tau$ such that
$\mathcal A_\tau(\rho_k,m_k)\le I_\tau+1/k$.
Here
$\rho_k=(\rho_k^0,\ldots,\rho_k^N),\; m_k=(m_k^1,\ldots,m_k^N)$,
where the subscript $k$ labels the element of the minimizing sequence and the
superscript $n$ labels the time level.  
We have $\mathcal A_\tau(\rho_k,m_k)\le C$ for all $k$.  Set
$w_k^{n+1}=m_k^{n+1}+\gradG\rho_k^{n+1}$.
Let $\ell_*$ be a lower bound for $\cL$, and let $b_F,b_T$ be lower bounds
for $-\cF$ and $\cU_T$ on $\cP(G)$.  For every $0\le n<N$,
$$\tau\cL(\rho_k^{n+1},w_k^{n+1})
\le C-\tau(N-1)\ell_*-N\tau b_F-b_T.$$
Thus 
$\cL(\rho_k^{n+1},w_k^{n+1})\leq C_{\tau}$ uniformly in $k$.
By
\eqref{coercivity}, we obtain 
$\max_{0\le n<N}\norm{w_k^{n+1}}_E\le C_\tau$, and thus 
$m_k^{n+1}=w_k^{n+1}-\gradG\rho_k^{n+1}$ is bounded because   $\|\gradG\rho_k^n\|_{E}\leq C$, 
where $\norm{\cdot}_E$ is given in \eqref{infty-E}.  
We may therefore
extract a subsequence, still denoted by $(\rho_k,m_k)$, such that
$\rho_k^n\longrightarrow\rho^n, \, m_k^{n+1}\longrightarrow m^{n+1}$
for every time level $n$.  Passing to the limit gives 
\begin{equation*}
0=\lim_{k\to\infty}
 (\frac{\rho_k^{n+1}-\rho_k^n}{\tau}
 +\divG m_k^{n+1})
 =\frac{\rho^{n+1}-\rho^n}{\tau}
 +\divG m^{n+1}.
\end{equation*}
Also $\rho^0=\rho_0$ and $\rho^n\in\cP(G)$, since $\cP(G)$ is closed.  Thus the limit path is feasible.  
By 
$w_k^{n+1}\to m^{n+1}+\gradG\rho^{n+1}$, lower semicontinuity of $\cL$ and
continuity of $\cF,\cU_T$ give
$\mathcal A_\tau(\rho,m) \le\liminf_{k\to\infty}\mathcal A_\tau(\rho_k,m_k)$.
Thus $(\rho,m)$ is a minimizer.

For uniqueness, let $(\rho,m)$ and $(\widetilde\rho,\widetilde m)$ be two
minimizers and let $w=m+\gradG\rho$ and
$\widetilde w=\widetilde m+\gradG\widetilde\rho$.  Define their midpoint by
$
\widehat\rho^n:=\frac{\rho^n+\widetilde\rho^n}{2},\;
 \widehat m^{n+1}:=\frac{m^{n+1}+\widetilde m^{n+1}}{2}.
$ 
Since $\cP(G)$ is convex, $\widehat\rho^n\in\cP(G)$ and
$\widehat\rho^0=\rho_0$.  Moreover,
\begin{align*}
 \frac{\widehat\rho^{n+1}-\widehat\rho^n}{\tau}
 +\divG\widehat m^{n+1}
 &=
 \frac12\Bigl(
 \frac{\rho^{n+1}-\rho^n}{\tau}+\divG m^{n+1}
 \Bigr)+
 \frac12\Bigl(
 \frac{\widetilde\rho^{n+1}-\widetilde\rho^n}{\tau}
 +\divG\widetilde m^{n+1}
 \Bigr)=0.
\end{align*}
Thus $(\widehat\rho,\widehat m)$ is feasible and $
 \widehat w^{n+1}
 =\widehat m^{n+1}+\gradG\widehat\rho^{n+1}
 =\frac{w^{n+1}+\widetilde w^{n+1}}{2}.
$ 
By the convexity assumptions, for $0\le n<N$,
\begin{align*}
 &\cL(\widehat\rho^{n+1},\widehat w^{n+1})
\le\frac12\cL(\rho^{n+1},w^{n+1})
 +\frac12\cL(\widetilde\rho^{n+1},\widetilde w^{n+1}),\\
 &-\cF(\widehat\rho^{n+1})
 \le-\frac12\cF(\rho^{n+1})
 -\frac12\cF(\widetilde\rho^{n+1}),\qquad 
 \cU_T(\widehat\rho^N)
 \le\frac12\cU_T(\rho^N)
 +\frac12\cU_T(\widetilde\rho^N).
\end{align*}
Suppose that $\rho^k\ne\widetilde\rho^k$ for some $1\le k\le N$.
Since $-\cF$ is strictly convex, the second inequality is strict at
$n+1=k$.  Summing the preceding inequalities gives
\[
 \mathcal A_\tau(\widehat\rho,\widehat m)
 <\frac12\mathcal A_\tau(\rho,m)
 +\frac12\mathcal A_\tau(\widetilde\rho,\widetilde m)
 =I_\tau,
\]
contrary to the definition of $I_\tau$.  Hence
$\rho^n=\widetilde\rho^n$ for every $0\le n\le N$. 
It remains to compare the flux variables $w,\widetilde w$.  If
$w^k\ne\widetilde w^k$ for some $1\le k\le N$, and both
$\cL(\rho^k,w^k)$ and $\cL(\rho^k,\widetilde w^k)$ are finite, then strict
convexity in the second variable gives
\[
 \cL\Bigl(\rho^k,\frac{w^k+\widetilde w^k}{2}\Bigr)
 <\frac12\cL(\rho^k,w^k)
 +\frac12\cL(\rho^k,\widetilde w^k).
\]
All the density terms are now identical, and the remaining Lagrangian terms
are convex.  Thus the midpoint would again have action strictly smaller than
$I_\tau$, a contradiction.  Therefore
$w^n=\widetilde w^n$ for every $1\le n\le N$, and
$ 
 m^n=w^n-\gradG\rho^n
 =\widetilde w^n-\gradG\widetilde\rho^n
 =\widetilde m^n,\; 1\le n\le N.
$ 
\end{proof}

The following lemma shows that the boundary condition \eqref{action-boundary} on $\cL$ prevents a
finite-action path from reaching the boundary of the simplex.
\begin{lemma}
\label{finite-positivity}
Let Assumption~\ref{variational-data} hold and let
$\rho^0\in\cP^\circ(G)$.   If $(\rho,m)$ satisfies
\eqref{continuity}, $\rho^n\in\cP(G)$ for every $n$ and has finite action,
then
$\rho^n\in\cP^\circ(G)$ for every $0\le n\le N$.
\end{lemma}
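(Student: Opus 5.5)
We argue by induction on $n$, mimicking the positivity argument in the proof of Lemma~\ref{q-interior}, with the boundary condition \eqref{action-boundary} playing the role of \eqref{boundary-B_revise}. The base case $\rho^0\in\cP^\circ(G)$ is the hypothesis. Assume $\rho^n\in\cP^\circ(G)$ for some $0\le n<N$ and, for contradiction, that $\rho^{n+1}$ has a zero component. Set $Z=\{i:\rho_i^{n+1}=0\}$ and $P=V\setminus Z$.

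First, $P\ne\varnothing$. Summing \eqref{continuity} over the vertices uses $\sum_i(\divG m^{n+1})_i=0$, which follows from \eqref{graph-ibp} with $u=\bm1$. This gives mass conservation, so $\sum_i\rho_i^{n+1}=1$. Since $G$ is connected, there is an edge $(i,j)\in E$ with $i\in Z$ and $j\in P$.

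Because the action is finite and $\cF,\cU_T$ are finite on $\cP(G)$, each term $\cL(\rho^{n+1},w^{n+1})$ is finite. Here we use that $\cL$ is bounded below, as shown in the proof of Lemma~\ref{minimizer-lemma}, so no individual term can be $+\infty$ while the sum stays finite. Since $\rho_i^{n+1}=0$, \eqref{action-boundary} gives $w^{n+1}_{ik}=0$ for every $k\sim i$. Hence $m^{n+1}_{ik}=-(\gradG\rho^{n+1})_{ik}=-\sqrt{\omega_{ik}}(\rho_i^{n+1}-\rho_k^{n+1})=\sqrt{\omega_{ik}}\rho_k^{n+1}$.

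Substituting into the $i$th component of \eqref{continuity} and using \eqref{graph-divergence} gives
\[
\frac{\rho_i^{n+1}-\rho_i^n}{\tau}=-(\divG m^{n+1})_i=\sum_{k\sim i}\sqrt{\omega_{ik}}\,m^{n+1}_{ik}=\sum_{k\sim i}\omega_{ik}\rho_k^{n+1}.
\]
The right-hand side is strictly positive, since it is a sum of nonnegative terms and contains $\omega_{ij}\rho_j^{n+1}>0$. The left-hand side equals $-\rho_i^n/\tau<0$. This contradiction shows $\rho^{n+1}\in\cP^\circ(G)$, which completes the induction.

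The only delicate point is the use of the finite-action hypothesis. We must ensure that finiteness of $\mathcal A_\tau$ forces finiteness of every summand $\cL(\rho^{n+1},w^{n+1})$, which requires the lower bound on $\cL$. Once that is secured, the rest reduces to the sign argument above, exactly as in Lemma~\ref{q-interior}.
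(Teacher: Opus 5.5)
Your proof is correct and follows the paper's argument. At the first layer where some component vanishes, \eqref{action-boundary} forces $w^{n+1}_{ik}=0$, hence $m^{n+1}_{ik}=\sqrt{\omega_{ik}}\rho_k^{n+1}$, and the $i$th component of \eqref{continuity} gives a sign contradiction. Two of your steps are unnecessary but harmless: the neighbor $j\in P$ and the appeal to connectivity, since $-\rho_i^n/\tau<0\le\sum_{k\sim i}\omega_{ik}\rho_k^{n+1}$ already gives the contradiction; and the lower bound on $\cL$, since $\cL$ takes values in $\bbR\cup\{+\infty\}$, so a single infinite summand would already make the action infinite.
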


\begin{proof}
Suppose to the contrary that the path reaches the boundary of the simplex.
Let $n+1$ be the first time layer for which some component vanishes.  Thus,
there is a vertex $i$ such that
$\rho_i^{n+1}=0, \; \rho^k\in\cP^\circ(G)\;\text{for }0\le k\le n$.
In particular $\rho_i^n>0$.
Recall that $w^{n+1}=m^{n+1}+\gradG\rho^{n+1}$.
Since the action is finite,
$\cL(\rho^{n+1},w^{n+1})<\infty$.  Because
$\rho_i^{n+1}=0$, condition \eqref{action-boundary} therefore gives
$w_{ij}^{n+1}=0 \text{ for every }j\sim i$.
Using this identity, we obtain
\begin{equation*}
 m_{ij}^{n+1}
 =-(\gradG\rho^{n+1})_{ij}
 =-\sqrt{\omega_{ij}}(\rho_i^{n+1}-\rho_j^{n+1})
 =\sqrt{\omega_{ij}}\rho_j^{n+1}.
\end{equation*}
Now evaluate the divergence at the vertex $i$.  By
\eqref{graph-divergence}, we have 
$ 
 (\divG m^{n+1})_i
 =-\sum_{j\sim i}\omega_{ij}\rho_j^{n+1}\le0.
$ 
Substituting $\rho_i^{n+1}=0$ to the $i$th component of the discrete continuity equation
$\frac{\rho_i^{n+1}-\rho_i^n}{\tau} +(\divG m^{n+1})_i=0$ 
 gives 
$0 =-\frac{\rho_i^n}{\tau} -\sum_{j\sim i}\omega_{ij}\rho_j^{n+1}<0$.
This contradiction proves that no
component can vanish at any time layer.
\end{proof}

To identify the KKT equations with the discrete MFG system, we use the
following standard consequence of convex duality and Danskin's theorem; see
\cite[Section~26]{Rockafellar1970} and
\cite[Theorem~4.13 and Remark~4.14]{BonnansShapiro2000}.
\begin{lemma}\label{legendre}
Let Assumption~\ref{variational-data} hold.  For every
$(\rho,q)\in\cP^\circ(G)\times\bbS^{d\times d}$, the supremum
in the Legendre formula 
\[
\cH(\rho,q)= \sup_{w\in\bbS^{d\times d}}
 \bigl\{\ip{w}{q}_E-\cL(\rho,w)\bigr\}
\]
is attained at a unique point $W(\rho,q)$. 
The map $W$ is continuous, $\cH$ is $\mathcal C^1$, and
\[
 q=D_w\cL(\rho,w)
 \quad\text{if and only if}\quad
 w=W(\rho,q)=D_q\cH(\rho,q).
\]
If these equivalent relations hold, then
$D_\rho\cL(\rho,w)=-D_\rho\cH(\rho,q)$. 
\end{lemma}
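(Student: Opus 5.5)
The plan is to fix $\rho\in\cP^\circ(G)$ and work with the convex function $\ell:=\cL(\rho,\cdot)$ on the finite-dimensional space $\bbS^{d\times d}$ with inner product $\ip{\cdot}{\cdot}_E$. Assumption~\ref{variational-data} makes $\ell$ finite (hence convex and $\mathcal C^1$), strictly convex, and superlinear by \eqref{superlinear} applied with $K=\{\rho\}$.

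\emph{Existence and uniqueness of the maximizer.} For a fixed $q$, the map $w\mapsto\ip{w}{q}_E-\ell(w)$ is continuous. By superlinearity it tends to $-\infty$ as $\norm{w}_E\to\infty$, so a maximizer exists. Strict convexity of $\ell$ makes the objective strictly concave, so the maximizer $W(\rho,q)$ is unique. Since the objective is differentiable, the first-order condition $q=D_w\cL(\rho,W)$ holds. Conversely, if $q=D_w\cL(\rho,w)$, then $w$ is a critical point of a concave function, hence a maximizer, and therefore $w=W(\rho,q)$. This gives the stated equivalence, except for the identification of $W$ with $D_q\cH$.

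\emph{Continuity of $W$ and regularity of $\cH$.} Take $(\rho_k,q_k)\to(\rho,q)$ in $\cP^\circ(G)\times\bbS^{d\times d}$. Let $K$ be a compact neighbourhood of $\rho$ in the open simplex. Comparing the objective at $W_k:=W(\rho_k,q_k)$ with its value at $w=0$, and using \eqref{zero-action} together with continuity of $\cL(\cdot,0)$ on $K$, gives a uniform lower bound for it. Combined with the locally uniform superlinearity on $K$, this shows that $\{W_k\}$ is bounded. Any limit point $\bar w$ of $W_k$ satisfies $q=D_w\cL(\rho,\bar w)$ by $\mathcal C^1$ continuity of $\cL$. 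By uniqueness $\bar w=W(\rho,q)$, so the whole sequence converges and $W$ is continuous.

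Next, $\cH(\rho,q)=\max_{w\in B}\{\ip{w}{q}_E-\cL(\rho,w)\}$, where $B$ is a fixed ball that contains the maximizers for all $(\rho',q')$ near $(\rho,q)$. The integrand is $\mathcal C^1$ in $(\rho,q)$ and the maximizer is unique. Danskin's theorem therefore gives that $\cH$ is differentiable with
\[
D_q\cH(\rho,q)=W(\rho,q),\qquad D_\rho\cH(\rho,q)=-D_\rho\cL(\rho,W(\rho,q)).
\]
Both derivatives are continuous by continuity of $W$, so $\cH\in\mathcal C^1$. The first formula completes the equivalence $w=W=D_q\cH$. The second, evaluated at $w=W$, is exactly $D_\rho\cL(\rho,w)=-D_\rho\cH(\rho,q)$.

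Density derivatives must be read as covectors on $\bbR_0^d$ via the chosen extensions. Accordingly, I would apply Danskin along curves $\rho+s\eta$ with $\eta\in\bbR_0^d$.

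\emph{Main obstacle.} I expect the hardest step to be the uniform bound on $W_k$ near $\rho$ needed for continuity and for Danskin. The argument above relies on $\cL(\cdot,0)$ being bounded on $K$ (true since $\cL$ is $\mathcal C^1$ on the interior) and on \eqref{superlinear} holding uniformly over $K$. The rest is routine convex analysis, as in \cite[Section~26]{Rockafellar1970}.
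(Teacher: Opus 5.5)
Your proposal is correct and takes essentially the paper's route. The paper gives no separate proof and simply invokes convex duality \cite[Section~26]{Rockafellar1970} and Danskin's theorem \cite[Theorem~4.13 and Remark~4.14]{BonnansShapiro2000}; your argument fills in exactly those ingredients (existence from superlinearity, uniqueness from strict convexity, continuity of $W$ from a local bound on the maximizers via \eqref{zero-action} and \eqref{superlinear} plus uniqueness, and Danskin on a fixed compact ball).
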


\subsection{Proof of Theorem \ref{potential-sol}}\label{kkt-part}
We now present the proof of Theorem \ref{potential-sol}, by using the following finite-dimensional Lagrange multiplier theorem; see e.g. 
\cite[Chapter~12]{NocedalWright2006}.
\begin{lemma}\label{lagrange-mul-thm} If $\mathsf X$ and $\mathsf Y$ are
finite-dimensional inner-product spaces, {$\mathsf U\subset\mathsf X$
is open, $J:\mathsf U\to\bbR$ and
$\Gamma:\mathsf U\to\mathsf Y$} are $\mathcal C^1$, and
{$x_*\in\mathsf U$} is a local minimizer
of $J$ subject to ${\Gamma(x)=0}$, then surjectivity of
${D\Gamma(x_*):\mathsf X\to\mathsf Y}$ implies that there exists a unique
$\lambda\in\mathsf Y$ such that
$DJ(x_*)[\delta x]-\ip{\lambda}{{D\Gamma(x_*)[\delta x]}}_{\mathsf Y}=0$ for every
$\delta x\in\mathsf X$.  Equivalently, $x_*$ is a critical point of
$J(x)-\ip{\lambda}{{\Gamma(x)}}_{\mathsf Y}$.
\end{lemma}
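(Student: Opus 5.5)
The plan is the classical two-step argument: a tangent-curve construction through the implicit function theorem, followed by a linear-algebra identification of the multiplier. Write $A:=D\Gamma(x_*):\mathsf X\to\mathsf Y$ and let $A^*:\mathsf Y\to\mathsf X$ denote its adjoint with respect to the given inner products. Surjectivity of $A$ gives two facts. First, $A^*$ is injective. Second, $AA^*:\mathsf Y\to\mathsf Y$ is invertible, because $\ip{AA^*y}{y}_{\mathsf Y}=\norm{A^*y}^2>0$ for $y\ne0$. Let $\nabla J(x_*)\in\mathsf X$ be the Riesz representative of $DJ(x_*)$.

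\emph{Step 1 (tangent directions).} I will show that $DJ(x_*)[v]=0$ for every $v\in\ker A$. Fix such a $v$ and consider
\[
F(t,y):=\Gamma(x_*+tv+A^*y),
\]
which is defined and $\mathcal C^1$ near $(0,0)$ in $\bbR\times\mathsf Y$ because $\mathsf U$ is open. At the base point we have $F(0,0)=0$ and $D_yF(0,0)=AA^*$, which is invertible. The implicit function theorem then yields a $\mathcal C^1$ curve $y(t)$ with $y(0)=0$ and $F(t,y(t))=0$ for $|t|$ small. Differentiating this relation at $t=0$ gives
\[
Av+AA^*y'(0)=0.
\]
Since $Av=0$, this forces $y'(0)=0$. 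So the curve $x(t)=x_*+tv+A^*y(t)$ is feasible, satisfies $x(0)=x_*$, and has $x'(0)=v$. Because $x_*$ is a local minimizer under the constraint, the scalar function $t\mapsto J(x(t))$ has a local minimum at $t=0$. Hence $0=\frac{d}{dt}J(x(t))|_{t=0}=DJ(x_*)[v]$.

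\emph{Step 2 (existence and uniqueness of $\lambda$).} By Step 1, $\nabla J(x_*)\in(\ker A)^\perp$. In finite dimensions $(\ker A)^\perp=\operatorname{range}A^*$, so there is $\lambda\in\mathsf Y$ with $\nabla J(x_*)=A^*\lambda$. This means
\[
DJ(x_*)[\delta x]=\ip{A^*\lambda}{\delta x}=\ip{\lambda}{D\Gamma(x_*)[\delta x]}_{\mathsf Y}\quad\text{for all }\delta x\in\mathsf X,
\]
which is the asserted identity. For uniqueness, suppose $\lambda$ and $\lambda'$ both satisfy it. Then $A^*(\lambda-\lambda')=0$, and injectivity of $A^*$ gives $\lambda=\lambda'$. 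The reformulation as criticality of $J(x)-\ip{\lambda}{\Gamma(x)}_{\mathsf Y}$ follows by differentiating this expression at $x_*$.

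The only delicate point is Step 1. There one must produce genuinely feasible curves along every direction of $\ker A$, not merely first-order feasible ones. This is exactly where surjectivity is used, since it makes $AA^*$ invertible and allows the implicit function theorem to be applied. Step 2 is pure finite-dimensional linear algebra.
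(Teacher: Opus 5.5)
Your proof is correct and complete: the implicit-function-theorem construction of feasible curves $x_*+tv+A^*y(t)$ along $\ker D\Gamma(x_*)$ (using invertibility of $AA^*$), followed by $(\ker A)^\perp=\operatorname{range}A^*$ and injectivity of $A^*$, is the standard argument. The paper gives no proof of its own and only cites Nocedal--Wright, whose route is the same in substance: feasible points are realized along linearized feasible directions via the implicit function theorem, and the multiplier is then identified by linear algebra (Farkas' lemma, which for equality constraints reduces to exactly your range/kernel duality).
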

\begin{proof}[Proof of Theorem~\ref{potential-sol}] \textit{Proof of (i).}
{Lemma~\ref{minimizer-lemma}} gives a unique minimizer, which we write as
$ 
\rho_*=(\rho_*^0,\ldots,\rho_*^N),\;
m_*=(m_*^1,\ldots,m_*^N).
$ 
The path constructed in Lemma~\ref{heat-competitor} is admissible and has finite action, so the minimum of $\mathcal A_\tau$ is finite.  
Lemma~\ref{finite-positivity}
therefore gives $\rho_*^n\in\cP^\circ(G)$ for every $n$. 

\textit{Proof of (ii).}
{This proof is split into three steps. First, we apply Lemma~\ref{lagrange-mul-thm} to obtain the unique
Lagrange multipliers for the discrete continuity constraints and hence the
KKT system.  Then we present the stationarity equations for $\mathfrak L_{\tau}$. We finally fix the additive constants to recover the unique value
variable in \eqref{potential-scheme}.

\textit{Step 1: Apply Lemma~\ref{lagrange-mul-thm}.} Put
$\overline\rho=d^{-1}\bm1$ and write
$y^n=\rho^n-\overline\rho\in\bbR_0^d$ for $1\le n\le N$.  Define
\[
 \mathsf X=(\bbR_0^d)^N\times(\bbS^{d\times d})^N,
 \qquad
 \mathsf Y=(\bbR_0^d)^N,
\]
and use the coordinates
\[
 x=(y^1,\ldots,y^N,m^1,\ldots,m^N)\in\mathsf X,
 \qquad
 \rho^0(x)=\rho_0,\qquad \rho^n(x)=\overline\rho+y^n.
\]
The density coordinates range in the open set
\[
 \mathsf U=\bigl\{x\in\mathsf X:
 \overline\rho+y^n\in\cP^\circ(G),\ 1\le n\le N\bigr\}.
\]
Part (i) gives $x_*\in\mathsf U$, where
\[
 x_*=(\rho_*^1-\overline\rho,\ldots,
 \rho_*^N-\overline\rho,m_*^1,\ldots,m_*^N).
\]
On $\mathsf U$, set
\[
 J(x)=\mathcal A_\tau(\rho(x),m),
 \qquad
 \Gamma(x)=(\Gamma^0(x),\ldots,\Gamma^{N-1}(x)),
\]
where
\[
 \Gamma^n(x)=\frac{\rho^{n+1}(x)-\rho^n(x)}{\tau}
 +\divG m^{n+1},\qquad 0\le n<N.
\]
Each $\Gamma^n(x)$ belongs to $\bbR_0^d$, so
$\Gamma:\mathsf U\to\mathsf Y$.  {Assumptions~\ref{pot-struct}
and \ref{variational-data} imply that $J$ is $\mathcal C^1$ on
$\mathsf U$, while $\Gamma$ is affine and hence $\mathcal C^1$.}  Moreover,
$x_*$ minimizes $J$ subject to $\Gamma(x)=0$.

It remains to verify the surjectivity required by
Lemma~\ref{lagrange-mul-thm}.  For
\[
 \delta x=(\eta^1,\ldots,\eta^N,
 \zeta^1,\ldots,\zeta^N)\in\mathsf X,
 \qquad \eta^0=0,
\]
the affine form of $\Gamma$ gives
\[
 {D\Gamma^n(x_*)[\delta x]}
 =\frac{\eta^{n+1}-\eta^n}{\tau}
 +\divG\zeta^{n+1},\qquad 0\le n<N.
\]
Given an arbitrary
$q=(q^0,\ldots,q^{N-1})\in\mathsf Y$, choose
\[
 \zeta^{n+1}=0,\qquad
 \eta^0=0,\qquad
 \eta^{n+1}=\eta^n+\tau q^n,\qquad 0\le n<N.
\]
Then $\eta^n\in\bbR_0^d$ and
$D\Gamma(x_*)[\delta x]=q$.  Hence
$D\Gamma(x_*):\mathsf X\to\mathsf Y$ is surjective.}

{We have now verified all the hypotheses of
Lemma~\ref{lagrange-mul-thm}.  Therefore, there exists a unique}
$\lambda=(\lambda^0,\ldots,\lambda^{N-1})\in\mathsf Y=(\bbR_0^d)^N$ such that 
\begin{equation}\label{mult-id}
 DJ(x_*)[\delta x]
 -\sum_{n=0}^{N-1}\ip{\lambda^n}
 {{D\Gamma^n(x_*)[\delta x]}}=0
 \qquad\text{for every }\delta x\in\mathsf X.
\end{equation}
Define $\phi^n:=\lambda^n/\tau\in\bbR_0^d$.  {Then
\eqref{mult-id} becomes}
{
\[
 0=D_{(\rho,m)}
 \mathfrak L_\tau(\rho_*,m_*,\phi)[\eta,\zeta]
\]}
for every
$(\eta,\zeta)\in(\bbR_0^d)^N
\times(\bbS^{d\times d})^N$, {where we recall that
$\mathfrak L_\tau$ is defined in \eqref{kkt-cond}.}
For arbitrary constants $a_n$, replacing $\phi^n$ by
$\phi^n+a_n\bm1$ does not change $\mathfrak L_\tau$, since
$ {\Gamma^n(x)}\in\bbR_0^d$ and
$\ip{\bm1}{{\Gamma^n(x)}}=0$.  Thus the
multiplier can be equivalently viewed as a class in
$\bbR^d/\operatorname{span}\{\bm1\}$.  We will uniquely determine the constants $a_n$ in \textit{Step 3} based on the density stationarity
equations derived in \textit{Step 2} and the given terminal condition. 

\textit{Step 2: We now deduce the stationarity equations.}
We begin by taking the variation with respect to $m^{n+1}$. 
Fix $0\le n<N$ and $\zeta\in\bbS^{d\times d}$.  Recall that the
optimal flux is $m_*=(m_*^1,\ldots,m_*^N)$.  
Stationarity with $m^{n+1}$ for $\mathfrak L_{\tau}$ therefore gives
\begin{align*}
 0
 &=\tau\ip{D_w\cL(\rho_*^{n+1},w_*^{n+1})}{\zeta}_E
 -\tau\ip{\phi^n}{\divG\zeta}=\tau\ip{D_w\cL(\rho_*^{n+1},w_*^{n+1})
 +\gradG\phi^n}{\zeta}_E.
\end{align*}
Since $\zeta$ is arbitrary, this yields 
\begin{equation}\label{kkt-momentum}
 D_w\cL(\rho_*^{n+1},w_*^{n+1})=-\gradG\phi^n. 
\end{equation} 
Set $q=-\gradG\phi^n$.  Equation~\eqref{kkt-momentum} gives
$q=D_w\cL(\rho_*^{n+1},w_*^{n+1})$.  Since
$\cL(\rho_*^{n+1},\cdot)$ is convex, for every
$z\in\bbS^{d\times d}$,
\begin{align*}
 \cL(\rho_*^{n+1},z)
 &\ge\cL(\rho_*^{n+1},w_*^{n+1})
 +\ip{q}{z-w_*^{n+1}}_E,
 \end{align*}
 and thus \begin{align*}
 \ip{z}{q}_E-\cL(\rho_*^{n+1},z)
 &\le\ip{w_*^{n+1}}{q}_E
 -\cL(\rho_*^{n+1},w_*^{n+1}),
\end{align*} which means that  $w_*^{n+1}$ is the unique maximizer in the definition of
$\cH(\rho_*^{n+1},q)$. Applying  Lemma~\ref{legendre} gives
$w_*^{n+1}=D_q\cH(\rho_*^{n+1},-\gradG\phi^n)$.
Since $m_*^{n+1}=w_*^{n+1}-\gradG\rho_*^{n+1}$, substituting this identity into
\eqref{continuity} yields
\begin{align*}
 \frac{\rho_*^{n+1}-\rho_*^n}{\tau}
 &=-\divG D_q\cH(\rho_*^{n+1},-\gradG\phi^n)
 +\DeltaG\rho_*^{n+1},
\end{align*}
which is \eqref{scheme-rho} for $\rho_*$.

Next, we take the variation with respect to $\rho^{k}$. 
Let $1\le k\le N-1$ and $\eta\in\bbR_0^d$.  
Stationarity with $\rho^k$ for $\mathfrak L_{\tau}$ in the tangent space gives
\begin{equation}\label{kkt-rho}
 \ip{\frac{\phi^k-\phi^{k-1}}{\tau}
 +D_\rho\cL(\rho_*^k,w_*^k)-D_\rho \cF(\rho_*^k)
 +\DeltaG\phi^{k-1}}{\eta}=0,
 \qquad \eta\in\bbR_0^d.
\end{equation}
By Lemma~\ref{legendre}, we  also have 
\begin{equation}\label{legendre-rho}
D_\rho\cL(\rho_*^k,w_*^k)=-D_\rho\cH(\rho_*^k,-\gradG\phi^{k-1}).
\end{equation}
Then  
\eqref{kkt-rho} becomes
\begin{equation}\label{value-stationarity}
 \ip{\frac{\phi^k-\phi^{k-1}}{\tau}
 -D_\rho\cH(\rho_*^k,-\gradG\phi^{k-1})
 -D_\rho \cF(\rho_*^k)+\DeltaG\phi^{k-1}}{\eta}=0,
 \qquad \eta\in\bbR_0^d.
\end{equation}
For $1\le k\le N-1$, define 
\begin{equation}\label{value-constant}
\begin{aligned}
 R^k&:=
 \frac{\phi^k-\phi^{k-1}}{\tau}
 -D_\rho\cH(\rho_*^k,-\gradG\phi^{k-1})
 -D_\rho \cF(\rho_*^k)+\DeltaG\phi^{k-1},\;
 c_k:=\frac1d\ip{R^k}{\bm1}.
\end{aligned}
\end{equation} 
For $1\le k\le N-1$, equation~\eqref{value-stationarity} implies $R^k\in(\bbR_0^d)^\perp=\operatorname{span}\{\bm1\}$. Hence $R^k=c_k\bm1$, and the constants $c_k$ are uniquely determined by the zero-mean multipliers $\{\phi^k\}_{k=0}^{N-1}$.  
At the terminal layer, a variation with $\rho_*^N$ gives
\begin{align}
 0=\ip{D_\rho \cU_T(\rho_*^N)-\phi^{N-1}}{\eta}
 +\tau\ip{D_\rho\cL(\rho_*^N,w_*^N)-D_\rho \cF(\rho_*^N)
 +\DeltaG\phi^{N-1}}{\eta}.
 \label{terminal-variation}
\end{align}
Define $\phi^N=D_\rho \cU_T(\rho_*^N)$.
Then \eqref{terminal-variation} and \eqref{legendre-rho} yield the same residual formula as in \eqref{value-constant} at $k=N$. Defining $R^N$ by that formula and $c_N=d^{-1}\ip{R^N}{\bm1}$ gives $R^N=c_N\bm1$. 

\textit{Step 3: We determine the constants $a_n$ in  \textit{Step 1}.}   
Set $a_N:=0$ and 
$\widehat\phi^N=\phi^N=D_\rho \cU_T(\rho_*^N)$.  Define the remaining constants by
$a_{k-1}=a_k+\tau c_k$ for $k=N,N-1,\ldots,1$.
Equivalently, $a_n=\tau\sum_{k=n+1}^N c_k$ for $0\le n<N$.
Thus every $a_n$ is uniquely determined by the KKT multiplier $\{\phi^k\}_{k=0}^{N-1}$
and the terminal condition $\phi^N$.
Define $\widehat\phi^k=\phi^k+a_k\bm1$ for $0\le k\le N$.  Since
$\gradG\bm1=0$ and $\DeltaG\bm1=0$, we have
$\gradG\widehat\phi^k=\gradG\phi^k$ and
$\DeltaG\widehat\phi^k=\DeltaG\phi^k$.  Therefore 
\begin{align*}
 &\frac{\widehat\phi^k-\widehat\phi^{k-1}}{\tau}
 -D_\rho\cH(\rho_*^k,-\gradG\widehat\phi^{k-1})
 -D_\rho\cF(\rho_*^k)+\DeltaG\widehat\phi^{k-1}\\
 &=
 \frac{\phi^k-\phi^{k-1}}{\tau}
 -D_\rho\cH(\rho_*^k,-\gradG\phi^{k-1})
 -D_\rho\cF(\rho_*^k)+\DeltaG\phi^{k-1}
 +\frac{a_k-a_{k-1}}{\tau}\bm1\\
 &=R^k+\frac{a_k-a_{k-1}}{\tau}\bm1
 =\bigl(c_k+\frac{a_k-a_{k-1}}{\tau}\bigr)\bm1=0.
\end{align*}
  Hence $(\widehat\phi,\rho_*)$ satisfies
\eqref{scheme-phi}.  Moreover, \eqref{continuity} and the flux relation
\[
 m_*^{n+1}
 =D_q\cH(\rho_*^{n+1},-\gradG\widehat\phi^n)
 -\gradG\rho_*^{n+1}
\]
give
\[
 \frac{\rho_*^{n+1}-\rho_*^n}{\tau}
 =-\divG D_q\cH(\rho_*^{n+1},-\gradG\widehat\phi^n)
 +\DeltaG\rho_*^{n+1},
\]
which is \eqref{scheme-rho}.  
Thus $(\widehat\phi,\rho_*)$ satisfies the system
\eqref{potential-scheme}.   Since the KKT multiplier is unique by \textit{Step 1}, the quantities $R^k$, $c_k$, $a_k$, and the corrected sequence $\widehat\phi$ are unique as well. 

\textit{Proof of (iii).} 
Conversely, let $(\phi,\rho)$ solve \eqref{potential-scheme} in the open
simplex and define $(w,m)$ by \eqref{optimal-flux}.
Here $\phi=(\phi^0,\ldots,\phi^N)$,
$\rho=(\rho^0,\ldots,\rho^N)$, and
$w=(w^1,\ldots,w^N)$, $m=(m^1,\ldots,m^N)$ are discrete paths. 
Then $\frac{\rho^{n+1}-\rho^n}{\tau}+\divG m^{n+1}=0$, so $(\rho,m)$ is feasible.
The identities in Lemma~\ref{legendre} give the analogue of
\eqref{kkt-momentum}. The value and terminal equations, read backwards
through the calculation above, give stationarity with respect to every
density variable.  Hence
$(\rho,m,\phi)$ satisfies the KKT conditions.

Let $(\bar\rho,\bar m)$ be any feasible pair.  The
action is convex in $(\rho,m)$ because $w=m+\gradG\rho$ is affine, $\cL$ is
jointly convex, $-\cF$ is convex, and $\cU_T$ is convex.  Therefore
\begin{align*}
 \mathcal A_\tau(\bar\rho,\bar m)-\mathcal A_\tau(\rho,m)
 &\ge D\mathcal A_\tau(\rho,m)[\bar\rho-\rho,\bar m-m]\\
 &=\tau\sum_{n=0}^{N-1}\ip{\phi^n}{\Gamma^n(\bar\rho,\bar m)-\Gamma^n(\rho,m)}=0.
\end{align*}
Here the second equality is primal stationarity and the last one uses
feasibility of both pairs.  Thus $(\rho,m)$ is a global minimizer, and it is
 the unique minimizer by Lemma~\ref{minimizer-lemma}.
\end{proof}

\section{Numerical realization}\label{sec6}
In this section, we present a numerical realization for solving the
discrete variational problem \eqref{min-prob}. We first introduce
mass-preserving coordinates, which transform the problem into a
finite-dimensional equality constrained optimization problem. We then
develop a feasible primal--dual Newton method in which every accepted
Newton step preserves the continuity equation and strict positivity of
the density.

To represent each skew-symmetric edge field by its independent
components, we fix an orientation of the graph.
Fix the oriented edge set
$E^+=\{(i,j)\in E:i<j\}$ and let $e=|E^+|=|E|/2$.
We identify an edge field $q\in\bbS^{d\times d}$ with
$(q_{ij})_{(i,j)\in E^+}\in\bbR^e$.  Let
$\mathsf G\in\bbR^{e\times d}$ be the weighted incidence matrix, so that
$\mathsf Gv$ is the graph gradient on $E^+$ and
$\divG q=-\mathsf G^Tq$. 
Let $\Pi\in\bbR^{d\times(d-1)}$ have orthonormal columns spanning
$\bbR_0^d$, and put $\bar\rho=d^{-1}\bm1$.  Every mass one vector has the
unique representation
\[
 \rho^n=\bar\rho+\Pi y^n,
 \qquad y^n=\Pi^T(\rho^n-\bar\rho).
\]
The initial coordinate $y^0$ is fixed.  For $1\le n\le N$, let
$m^n\in\bbR^e$ and set
$w^n=m^n+\mathsf G\rho^n$.  The continuity equation becomes
\begin{equation}\label{reduced-continuity}
 c^n(y,m)
 :=\frac{y^n-y^{n-1}}{\tau}
 -\Pi^T\mathsf G^Tm^n=0,
 \qquad 1\le n\le N.
\end{equation}
Indeed, the full continuity residual has zero sum, and its projection onto
$\bbR_0^d$ vanishes if and only if the residual itself vanishes.

Set
$ 
z=(y^1,m^1,\ldots,y^N,m^N)
$ 
and write the equations in \eqref{reduced-continuity} as $Az=b$, where
\begin{align*}
 (Az)^1&=\tau^{-1}y^1-\Pi^T\mathsf G^Tm^1,\quad
 (Az)^n=\tau^{-1}(y^n-y^{n-1})-\Pi^T\mathsf G^Tm^n,
 \qquad 2\le n\le N,\\
 b&=(\tau^{-1}y^0,0,\ldots,0)^T.
\end{align*}
The matrix $A$ has full row rank.  In fact, for arbitrary
$v=(v^1,\ldots,v^N)$, take $\delta m^n=0$, $\delta y^0=0$, and define
$\delta y^n=\delta y^{n-1}+\tau v^n$.  Then
$A\delta z=v$. 
In these coordinates, the discrete action \(\mathcal A_\tau\) in
\eqref{min-prob} is
\[
 J_\tau(z)
 =\tau\sum_{n=1}^N
 \bigl[\cL(\rho^n,w^n)-\cF(\rho^n)\bigr]
 +\cU_T(\rho^N).
\]
Thus \eqref{min-prob} is equivalent to
\begin{equation}\label{reduced-problem}
 \min\{J_\tau(z):Az=b,\ \rho_i^n(z)>0,\ 1\le n\le N,\ 1\le i\le d\}.
\end{equation}
Theorem~\ref{potential-sol} shows that this problem has a unique minimizer
$z_*$.  Its density is strictly positive, and  the path constructed in
Lemma~\ref{heat-competitor} supplies an interior point $z_0$ satisfying
$Az_0=b$.

For $\lambda=(\lambda^1,\ldots,\lambda^N)
\in(\bbR^{d-1})^N$, define
\[
 \mathscr L_\tau(z,\lambda)
 =J_\tau(z)+\tau\lambda^T(Az-b).
\]
The KKT equations for \eqref{reduced-problem} are
\begin{equation}\label{reduced-kkt}
 \nabla J_\tau(z)+\tau A^T\lambda=0,
 \qquad Az=b.
\end{equation}
Equivalently, \eqref{reduced-kkt} is the KKT system \eqref{kkt-system} written in the variables \((z,\lambda)\).  Below, we solve the constrained minimization problem \eqref{reduced-problem} by applying a feasible primal–dual Newton method to its KKT system \eqref{reduced-kkt}.  Once a KKT pair \((z,\lambda)\) is obtained, 
the value variable is recovered from the equality multipliers by
\(\phi^{n-1}=-\Pi\lambda^n+a_{n-1}\bm1\), where the constants are fixed backward as in the proof of Theorem~\ref{potential-sol}.

\subsection{A feasible Newton method for the KKT system}

We now describe a feasible primal--dual Newton method for solving \eqref{reduced-kkt}.  Assume in this subsection that $\cL$ is of class
$\mathcal C^2$ in the open simplex and that
$D_{ww}^2\cL(\rho,w)$ is positive definite.  These properties hold for
$\cL_\theta$, since the logarithmic mean is smooth and positive on
$(0,\infty)^2$.  Put
$ 
 H(z):=D^2J_\tau(z)
$
for $z=(y^1,m^1,\ldots,y^N,m^N)$. 
The matrix $H(z)$ is positive definite at every interior point.  To verify
this, let $\delta z$ be a nonzero variation and set
$\delta\rho^n=\Pi\delta y^n$ and
$\delta w^n=\delta m^n+\mathsf G\delta\rho^n$.  Then
\begin{align*}
 \delta z^TH(z)\delta z
 ={}&\tau\sum_{n=1}^N
 D^2\cL(\rho^n,w^n)
 [(\delta\rho^n,\delta w^n)]^2\\
 &-\tau\sum_{n=1}^N
 \ip{D^2\cF(\rho^n)\delta\rho^n}{\delta\rho^n}
 +\ip{D^2\cU_T(\rho^N)\delta\rho^N}{\delta\rho^N}.
\end{align*}
Since $\delta\rho^n=\Pi\delta y^n\in\bbR_0^d$, the strict convexity of
$-\cF$ is invoked only on the tangent space of the probability simplex,
exactly as assumed in Assumption~\ref{pot-struct}. 
Joint convexity of $\cL$ and convexity of $\cU_T$ make the first and last
terms on the right-hand side nonnegative.  If $\delta\rho^n\ne0$ at some level, the strict
concavity assumption on $\cF$ makes the second line positive.  If
$\delta\rho^n=0$ at every level, then $\delta z\ne0$ implies
$\delta m^n=\delta w^n\ne0$ at some level, and the positive definiteness of
$D_{ww}^2\cL$ again gives a positive value.

Let $(z,\lambda)$ be a current interior iterate satisfying $Az=b$.
Linearizing \eqref{reduced-kkt} gives
\begin{equation}\label{direct-newton}
 \mathcal K(z)
 \begin{pmatrix}\delta z\\ \delta\lambda\end{pmatrix}
 :=
 \begin{pmatrix}
  H(z)&\tau A^T\\
  \tau A&0
 \end{pmatrix}
 \begin{pmatrix}\delta z\\ \delta\lambda\end{pmatrix}
 =
 -
 \begin{pmatrix}
  \nabla J_\tau(z)+\tau A^T\lambda\\
  0
 \end{pmatrix}.
\end{equation}
This system has a unique solution.  Indeed, suppose that $ \mathcal K(z)(\delta z,\delta\lambda)^{\top}=0$. The second block equation gives
$A\delta z=0$.  Taking the inner product of the first block equation with
$\delta z$ yields $\delta z^TH(z)\delta z=0$, and hence $\delta z=0$.
The first block equation then gives $A^T\delta\lambda=0$.  Since $A$ has full
row rank, $A^T$ is injective and $\delta\lambda=0$.

The equality $A\delta z=0$ also shows that every candidate point 
$z+s\delta z$ along the Newton direction remains feasible:
$
 A(z+s\delta z)=b.
$ 
Let $\delta\rho^n=\Pi\delta y^n$.  To preserve strict positivity, choose
\begin{equation}\label{positive-step}
 s_{\max}
 =\min\biggl\{1,\,
 \eta\min_{\substack{1\le n\le N,\ 1\le i\le d\\
 \delta\rho_i^n<0}}
 \frac{-\rho_i^n}{\delta\rho_i^n}\biggr\},
 \qquad 0<\eta<1,
\end{equation}
where $s_{\max}=1$ if every component of $\delta\rho$ is nonnegative.
For $0<s\le s_{\max}$ and $\delta\rho_i^n<0$,
\[
 \rho_i^n+s\delta\rho_i^n
 \ge(1-\eta)\rho_i^n>0;
\]
the same conclusion is immediate when $\delta\rho_i^n\ge0$. 
Multiplying the first block equation of \eqref{direct-newton} by
$\delta z^T$ and using $A\delta z=0$ gives
\[
 \nabla J_\tau(z)^T\delta z
 =-\delta z^TH(z)\delta z.
\]
Hence every nonzero primal Newton direction is a descent direction for the
original action.  Starting from $s_{\max}$, we successively halve the step
until
\begin{equation}\label{direct-armijo}
 J_\tau(z+s\delta z)
 \le J_\tau(z)+\gamma s\nabla J_\tau(z)^T\delta z,
 \qquad 0<\gamma<\frac12.
\end{equation}
Differentiability of $J_\tau$ and the strict descent inequality show that
such a positive step exists.  Then the update is 
\[
 (z,\lambda)\leftarrow
 (z+s\delta z,\lambda+s\delta\lambda).
\]
The KKT residual $R_{\rm KKT}
:=
\max\big\{
\|\nabla J_{\tau}(z)+\tau A^T\lambda\|_{\ell^\infty},
\tau\|Az-b\|_{\ell^\infty}
\big\}$
is used as the stopping criterion.

Starting from the interior feasible point $z_0$ and any initial multiplier, every accepted
iterate satisfies the continuity equation, has unit mass, and remains in the
open simplex. 
With the variables grouped by time layer, \(H(z)\) is block diagonal and \(A\) is block bidiagonal. Hence the KKT matrix in \eqref{direct-newton} is block sparse in time. In the experiments, we solve this system by sparse direct factorization \cite{BenziGolubLiesen2005}.

\subsection{Numerical experiments}\label{experiment}
In this section, we consider the logarithmic-mean quadratic model given in
Example~\ref{exam}.  We take
$\cF(\rho)=-\alpha\|\rho\|^2/2$ and
$\cU_T(\rho)=\beta\|\rho-\rho_{\rm tar}\|^2/2$. 
All numerical experiments solve the KKT system
\eqref{reduced-kkt} by the feasible primal--dual Newton method
\eqref{direct-newton}--\eqref{direct-armijo}. In every test, the initial primal variable is obtained from the path in Lemma~\ref{heat-competitor}, and the initial
multiplier is set to zero. We use the positivity factor
$\eta=0.995$ and the Armijo parameter $\gamma=10^{-4}$.
The stopping tolerance is chosen between
$5\times10^{-11}$ and $2\times10^{-8}$ according to the size of the
problem. 

Let $(\widehat z,\widehat\lambda)$ be the computed KKT pair, and let
$(\widehat\rho,\widehat\phi)$ be the density and value variables recovered
from it.  We evaluate the following residuals of
\eqref{potential-scheme}: 
\begin{align*}
 R_\rho^n
 &=\frac{\widehat\rho^{n+1}-\widehat\rho^n}{\tau}
 -\divG\bigl(\theta(\widehat\rho^{n+1})
 \gradG\widehat\phi^n\bigr)-\DeltaG\widehat\rho^{n+1},\\
 R_\phi^n
 &=\frac{\widehat\phi^{n+1}-\widehat\phi^n}{\tau}
 -D_\rho\cH_\theta
 (\widehat\rho^{n+1},-\gradG\widehat\phi^n)
 -D_\rho \cF(\widehat\rho^{n+1})+\DeltaG\widehat\phi^n. 
\end{align*} 
Set
$R_w^n=\widehat w^{n+1}+\theta(\widehat\rho^{n+1})
\gradG\widehat\phi^n$, and
$R_T=\widehat\phi^N-D_\rho\cU_T(\widehat\rho^N)$.
At an exact KKT point, all four residuals vanish. 

\textbf{Accuracy and convergence tests on a three-vertex graph.} We first take a
three-vertex graph $V=\{1,2,3\}$ with edge weights $\omega_{12}=1$, $\omega_{23}=1.2$, and $\omega_{13}=0.8$, and set
\begin{equation*}
 T=1,\quad \alpha=0.7,\quad \beta=5,\quad
 \rho^0=(0.65,0.25,0.10)^T,\quad
 \rho_{\rm tar}=(0.15,0.25,0.60)^T.
\end{equation*}
We take $N=24$ time steps. Figure~\ref{triangle-figure} shows the computed
density and recovered value variables. The Newton iteration terminates after four steps with $R_{\rm KKT}=3.33\times10^{-16}$.    All density components
remain positive, with $\min_{n,i}\rho_i^n=0.1$, and mass is preserved: 
$ 
\max_n\left|\sum_i\rho_i^n-1\right|
=2.22\times10^{-16}.
$
Moreover, 
\[
\max\left\{
\max_n\|R_\rho^n\|_{\ell^\infty},
\max_n\|R_\phi^n\|_{\ell^\infty},
\max_n\|R_w^n\|_{\ell^\infty},
\|R_T\|_{\ell^\infty}
\right\}
=3.45\times10^{-15}.
\]

\begin{figure}[htbp]
 \centering
 \includegraphics[height=0.30\textheight,keepaspectratio]
 {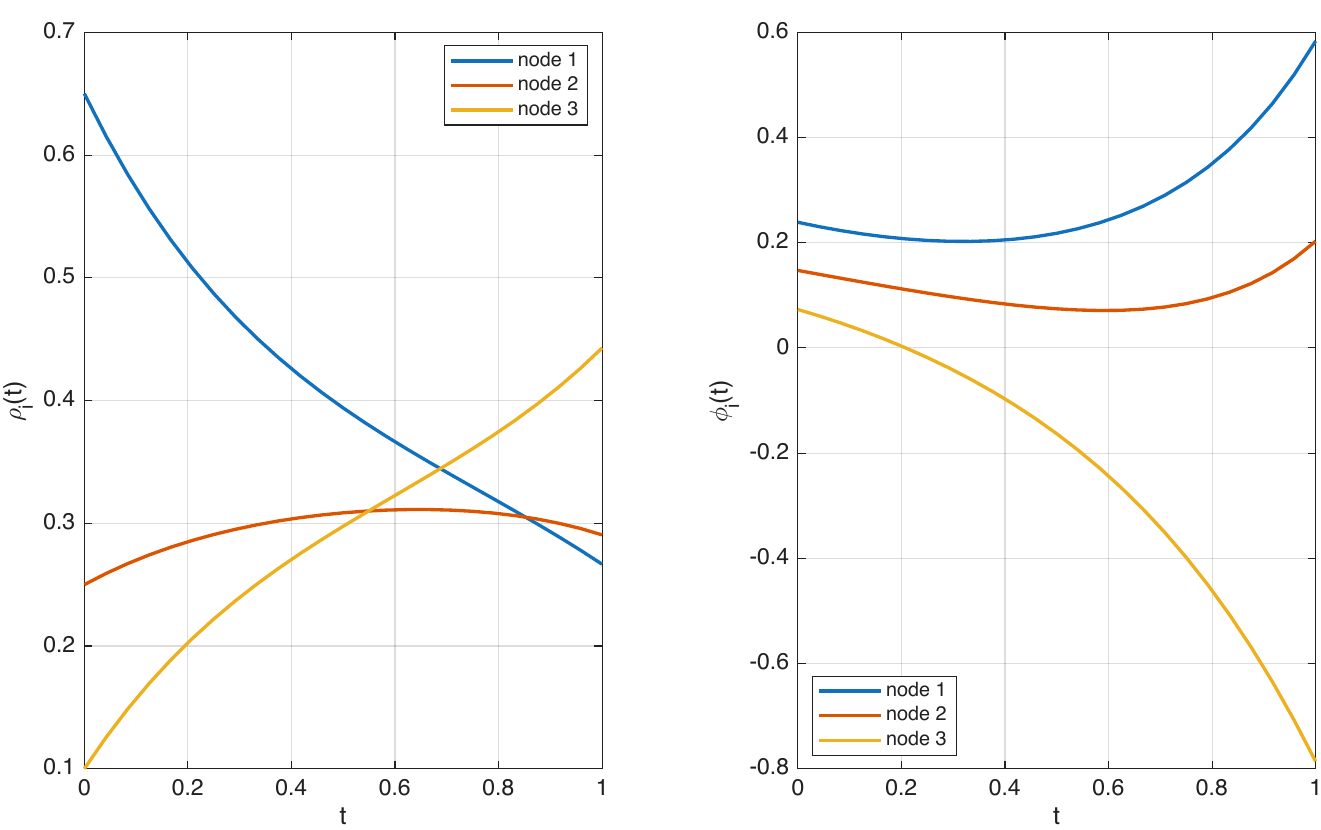}
 \caption{Density and value variables for the
 three-vertex graph experiment  
 with $N=24$.}
 \label{triangle-figure}
\end{figure}

For the temporal convergence test, we use the numerical solution computed by the same scheme with 
$N_{\rm ref}=1536$ time steps 
as the reference solution.  
At common time nodes, set
$E_\rho=\max_n\|\rho^n-\rho_{\rm ref}(t_n)\|$,
$E_\phi=\max_n\|\phi^n-\phi_{\rm ref}(t_n)\|$ and 
$E_p=\max_n\|\mathsf G(\phi^n-\phi_{\rm ref}(t_n))\|$. Table~\ref{time-table} exhibits first-order empirical convergence for
the three reported quantities. The observed first-order convergence of
$E_\rho$ and $E_\phi$ agrees with the maximum estimates in
Theorem~\ref{general-results}. 
Moreover, since 
$ (
\tau\sum_{n=0}^{N-1}
\|\gradG(\phi^n-\phi_{\rm ref}(t_n))\|_E^2
)^{1/2}
\le \sqrt{T}\,E_p,$ the observed first-order convergence of $E_p$ is stronger than, and therefore consistent with, the graph-gradient estimate in Theorem~\ref{general-results}. 

\begin{table}[htbp]
\centering
\caption{Time convergence for the triangle problem.}
\label{time-table}
\begin{tabular}{ccccccc}
\toprule
$N$ & $E_\rho$ & order & $E_\phi$ & order & $E_p$ & order\\
\midrule
$12$ & $1.4177\times10^{-2}$ & --      & $7.4725\times10^{-2}$ & --      & $1.2498\times10^{-1}$ & --\\
$24$ & $7.1265\times10^{-3}$ & $0.992$ & $3.7780\times10^{-2}$ & $0.984$ & $6.3291\times10^{-2}$ & $0.982$\\
$48$ & $3.5300\times10^{-3}$ & $1.014$ & $1.8745\times10^{-2}$ & $1.011$ & $3.1412\times10^{-2}$ & $1.011$\\
$96$ & $1.7137\times10^{-3}$ & $1.043$ & $9.1112\times10^{-3}$ & $1.041$ & $1.5269\times10^{-2}$ & $1.041$\\
\bottomrule
\end{tabular}
\end{table}

\textbf{Effects of graph topology on transport.} We next illustrate three effects of graph topology in Figures~\ref{cycle-figure}--\ref{tree-figure}: symmetric splitting between equivalent routes, a detour
created by deleted edges, and branching from one vertex to several leaves.
We first take the unit-weight cycle graph on $20$ vertices  with
\[
 V=\{1,\ldots,20\},\qquad
 E^+=\{(i,i+1):1\le i\le19\}\cup\{(1,20)\}.
\]
Vertices $1$ and $11$ are opposite, and the two paths joining them both have
ten edges.  Set $\vartheta_i=2\pi(i-1)/20$ and $\kappa=5$, and for $c\in[0,\pi],$ define
\[
 v_i(c)=\frac{\exp\bigl(\kappa\cos(\vartheta_i-c)\bigr)}
 {\sum_{j=1}^{20}\exp\bigl(\kappa\cos(\vartheta_j-c)\bigr)},
 \qquad
 \rho_i^0=0.98v_i(0)+\frac{0.02}{20},\qquad
 \rho_{{\rm tar},i}=0.98v_i(\pi)+\frac{0.02}{20}.
\]
Thus the initial and reference densities are centered at vertices $1$ and
$11$, respectively, and both  are strictly
positive.  We take $T=1$, $N=64$, $\alpha=1$, and
$\beta=5000$. 
The heat map in Figure~\ref{cycle-figure} shows two symmetric density bands
moving along the two symmetric paths and recombining near vertex $11$, illustrating the equilibrium splitting of the population between two
routes with identical costs. The
right panel compares $\rho^N$ with $\rho_{\rm tar}$, which nearly coincide.

\begin{figure}[htbp]
 \centering
 \includegraphics[width=0.9\textwidth]{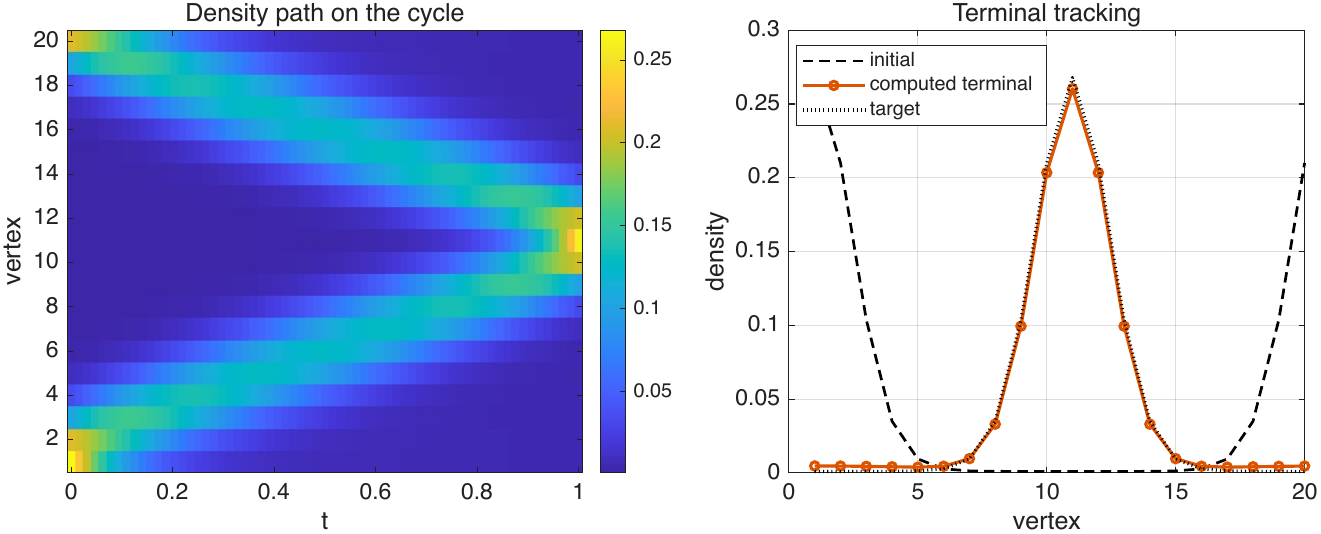}
 \caption{Transport between opposite vertices of a $20$-vertex cycle.  The left panel resolves the two symmetric routes around the graph; the right panel
 compares the computed terminal density with the penalized target. }
 \label{cycle-figure}
\end{figure}

Then we consider an obstacle test in Figure~\ref{perforated-figure} by removing five vertices from an $8\times8$ lattice to
form a vertical wall.  The connected graph has $59$ vertices and
$97$ edges.  The initial and target profiles lie on opposite sides near the
lower boundary.  Figure~\ref{perforated-figure} shows that the density must
move through the only passage, and the detour is caused solely by the edge set. This shows how removing edges
changes the admissible transitions and forces the equilibrium
population flow to pass through the remaining corridor.

\begin{figure}[htbp]
 \centering
 \includegraphics[width=0.86\textwidth]
 {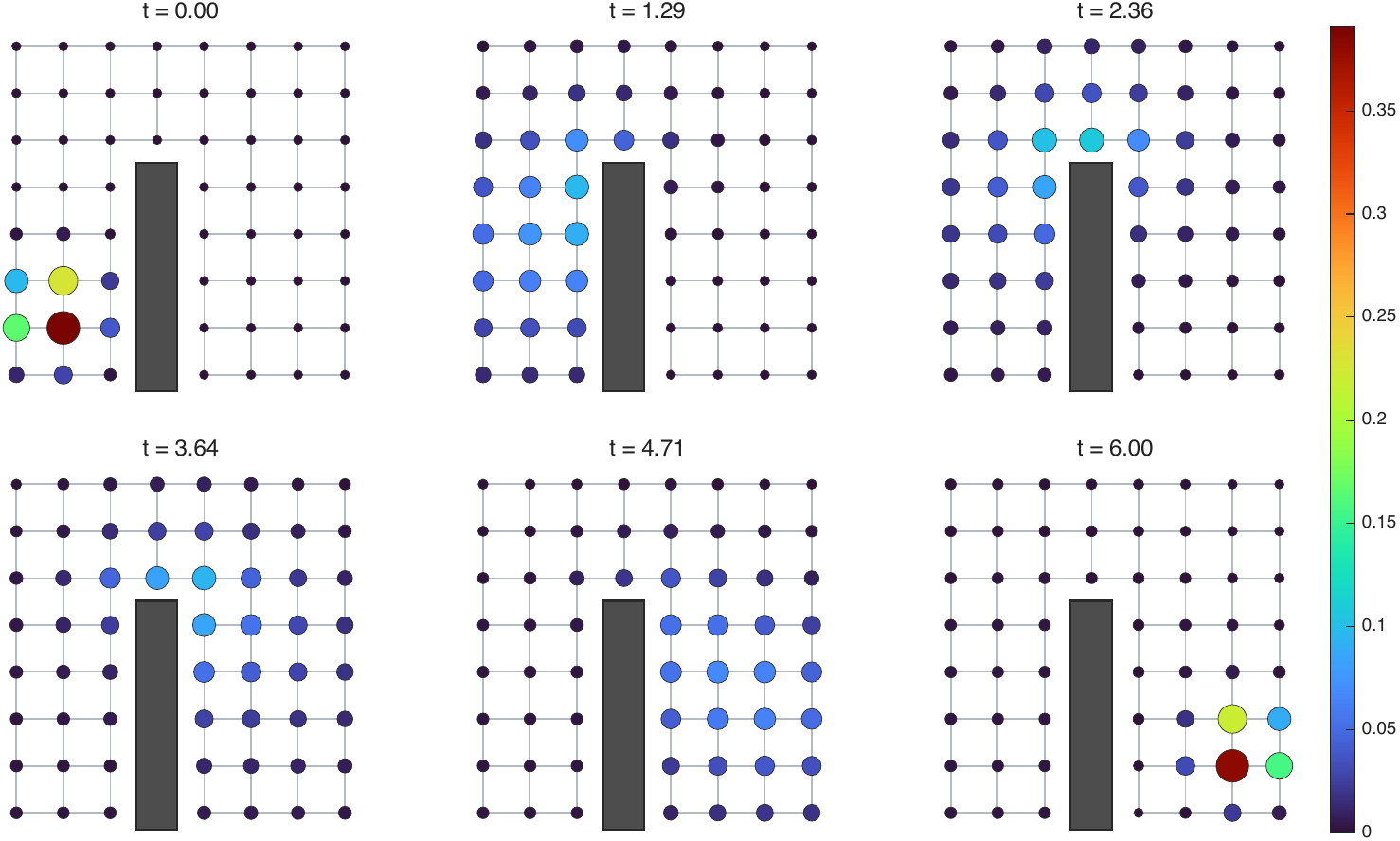}
 \caption{Transport on an  $8\times8$ lattice.  The gray wall marks
 the deleted vertices and edges, so the density is forced through the upper
 passage.}
 \label{perforated-figure}
\end{figure}

Next, consider a tree with six arms of length six in Figure~\ref{tree-figure}.  The initial
density is concentrated at the common center, while the terminal target gives
equal mass to the six leaves. Figure~\ref{tree-figure} resolves the symmetric propagation along the
arms and the formation of six terminal peaks. This reflects the
branching of the equilibrium population from the root to the six
leaves under a symmetric terminal cost.

\begin{figure}[htbp]
 \centering
 \includegraphics[width=0.86\textwidth]
 {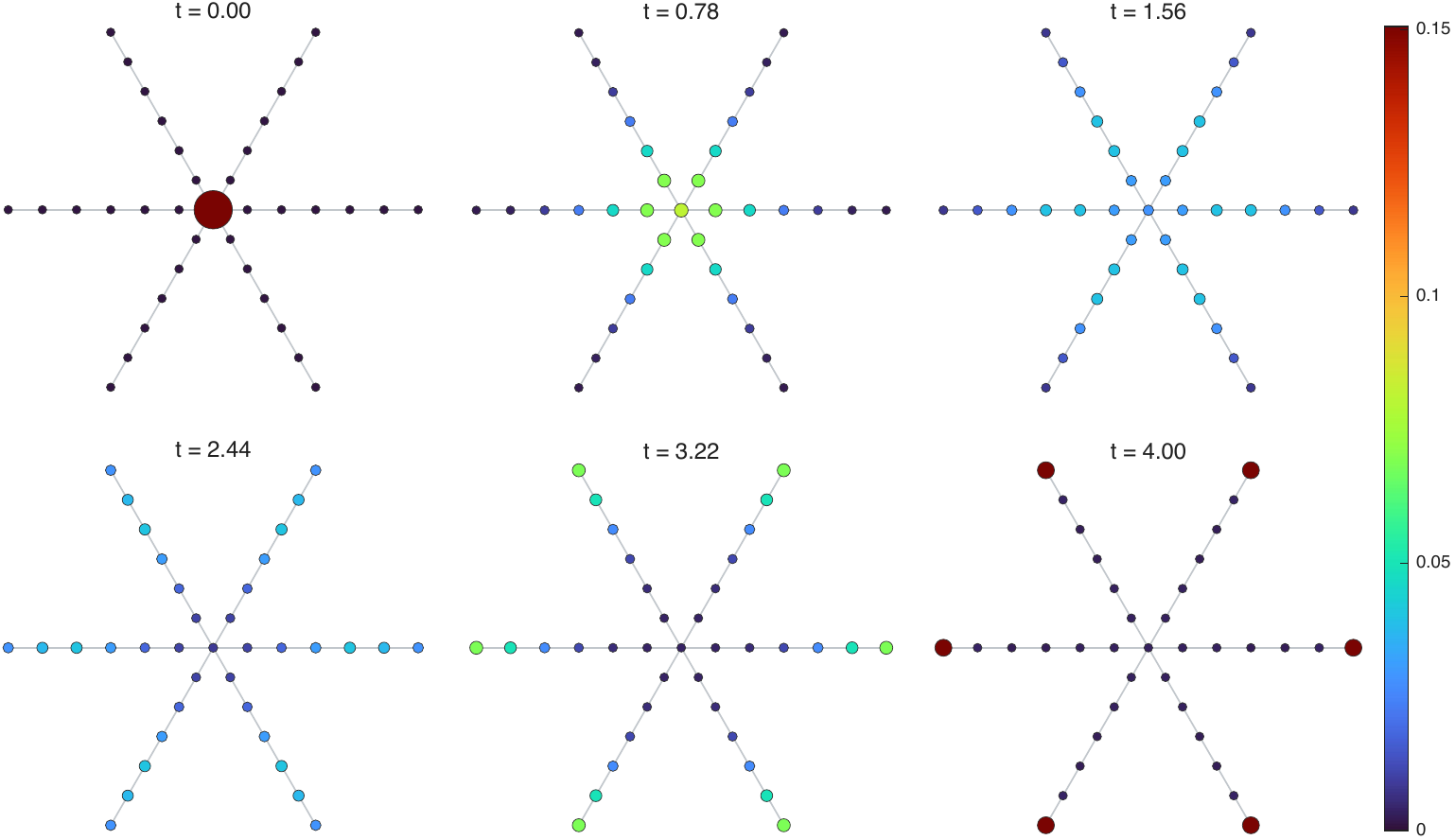}
 \caption{Bifurcation from one central peak to six leaf peaks on a $37$-vertex
 tree.  Marker area and color represent the density.}
 \label{tree-figure}
\end{figure}

\textbf{Congestion-dependent route choice.} 
Mean field games with congestion effects on graphs have been studied in \cite{Gueant2015}. We next apply the proposed method to a potential MFG on a graph with two alternative routes and examine how a congestion penalty changes the distribution of flow between them. 
As shown in Figure~\ref{route-figure}, we consider the $7\times7$ lattice with coordinates
$x,y\in\{(k-3)/3:k=0,\ldots,6\}$.  We delete the five vertices
$(0,k/3)$, $k=-2,\ldots,2$.  The remaining graph has
$44$ vertices and $68$ edges, with one passage above and one passage below
the wall.  For a center $c\in\bbR^2$, set
\[
 \varrho_i(c)=(1-\eta)
 \frac{\exp\big(-\norm{x_i-c}^2/(2\sigma^2)\big)}
 {\sum_{j=1}^{44}\exp\big(-\norm{x_j-c}^2/(2\sigma^2)\big)}
 +\frac{\eta}{44}, \quad i=1,2,\ldots,44.
\]
We use $\rho_i^0=\varrho_i(c_0)$ and
$\rho_{{\rm tar},i}=\varrho_i(c_T)$, where
$c_0=(-0.78,0.55)$, $c_T=(0.78,0.55)$, $\sigma=0.2$, and $\eta=0.04$.
The initial and target densities attain their maxima at \(\textbf{ S}=(-2/3,2/3)\) and \(\textbf{T}=(2/3,2/3)\), respectively, which are marked in Figure~\ref{route-figure}.  The shortest routes between these vertices
through the upper and lower passages contain, respectively, $6$ and $14$
edges.  We fix $T=6$, $N=24$, and $\beta=1000$,
 and vary the congestion
coefficient $\alpha$ in \eqref{quad-data}.  Here we recall that $\beta$ is the coefficient of the terminal penalty
$\cU_T(\rho)=\beta\norm{\rho-\rho_{\rm tar}}^2/2$. Since the running action contains
$\alpha\norm{\rho}^2/2$, larger $\alpha$ gives a stronger penalty for
concentrated density.

\begin{figure}[htbp]
 \centering
 \includegraphics[width=0.86\textwidth]
 {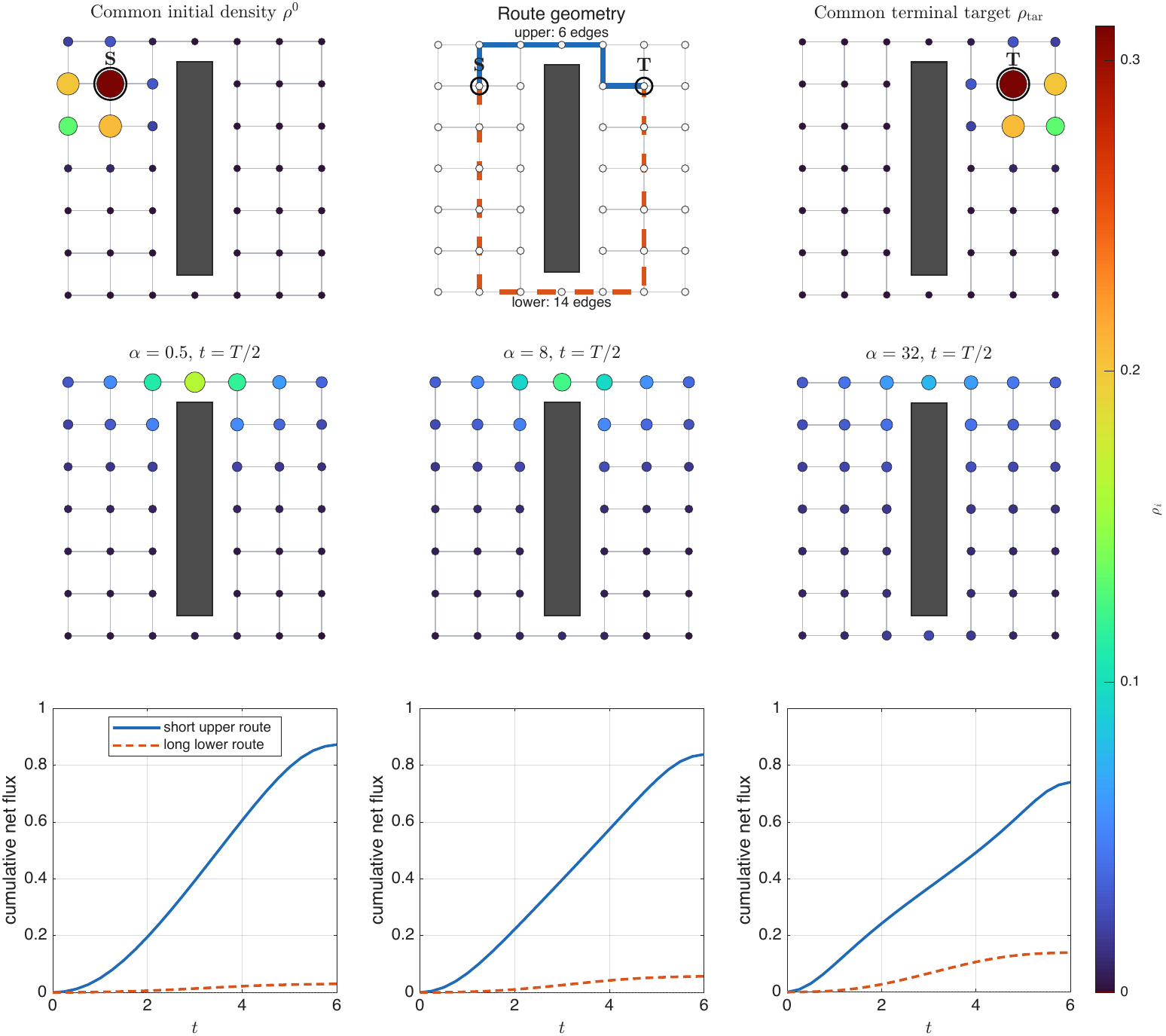}
 \caption{Congestion-dependent route choice on the two-passage graph.
 The upper row shows the common initial density, the two routes between their
 peak vertices, and the common terminal target.  The highlighted upper and
 lower routes contain $6$ and $14$ edges.  The middle row shows the density at
 $t=T/2$ for $\alpha=0.5,8,32$.  All density panels use the same scale; marker
 area and color increase with the vertex density.  The lower row shows the
 cumulative net flux \eqref{route-flux} through the upper and lower passages.
 The gray wall represents the deleted vertices and edges.}
 \label{route-figure}
\end{figure}

Let $e_+$ and $e_-$ denote, respectively, the left-to-right crossing edges in
the upper and lower passages.  With the orientation used in the computation,
their cumulative net fluxes are
\begin{equation}\label{route-flux}
 Q_\pm^n=-\tau\sum_{k=1}^n m_{e_\pm}^k, 
\end{equation} where \(m^k\in\mathbb R^{|E^+|}\) is the discrete edge-flux vector in the continuity equation, and \(m_{e_\pm}^k\) denotes its component corresponding to the crossing edge \(e_\pm\). 
For $\alpha=0.5,8,32$, the final upper fluxes are $0.8726$, $0.8381$, and
$0.7409$, whereas the lower fluxes are $0.0303$, $0.0569$, and $0.1398$.
Thus the lower-route share of the cumulative net crossing flux,
$Q_-^N/(Q_+^N+Q_-^N),$ 
equals $3.4\%$, $6.4\%$, and $15.9\%$, respectively. As $\alpha$ increases, a larger share of the computed net crossing flux uses the
longer lower route; see Figure~\ref{route-figure}. This behavior is
consistent with the stronger congestion penalty on concentrated density
induced by the density-dependent coupling.

\section{Conclusion}

This paper establishes convergence of a time-staggered scheme for MFGs with individual noise on finite graphs.
For general monotone coefficients,
we prove uniqueness, timestep-uniform interior estimates, and
first-order convergence for every interior discrete solution. For
potential MFGs, we further establish a variational characterization of
the scheme through a convex discrete action, which yields existence of
an interior discrete solution and supports a feasible primal--dual
Newton realization in mass-preserving coordinates. Several questions remain open. For example, 

\begin{itemize}
\item[(i)] 
The existence theory developed in this paper relies on the variational
structure of potential MFGs. Establishing existence for general monotone
graph MFGs remains open.

\item[(ii)] Developing higher-order
time discretizations that preserve the discrete fundamental identity and
the Lasry--Lions monotonicity mechanism, and the variational
structure in the potential case, would be of considerable interest.

\item[(iii)]
The present work treats the graph as the state space of the MFG.
It would also be interesting to understand how families of graph MFGs
approximate continuum or metric-graph models.
\end{itemize}

\section{Appendix}
\begin{proof}[Proof of Proposition~\ref{cont-sol}]
Assumption~\ref{continuous-data} and
\cite[Theorem~1.2]{GangboMunozWuZhang2026} give the existence of the solution.
If Assumption~\ref{assume-monotone} holds, then
\eqref{terminal-mono} holds and \eqref{differential-LL} implies
\eqref{LL}. The uniqueness assertion follows from the same theorem. 

According to 
\cite[Proposition~3.3 and Lemma~3.4]{GangboMunozWuZhang2026}, there are constants
$C_\phi,\delta_*>0$, depending on
$G,T$ and $\epsilon$,   such that
\[
 \max_{0\le t\le T}\norm{\phi(t)}_{\ell^\infty}\le C_\phi,
 \qquad
 \min_{0\le t\le T}\min_i\rho_i(t)\ge\delta_*.
\]
We may assume
$\delta_*\in(0,1/d)$.
Then 
\[
 (\rho(t),\gradG\phi(t))\in
 K_*:=\{(\rho,p):\rho\in\cP_{\delta_*},
 \ \norm{p}_{\ell^\infty(E)}
 \le2\sqrt{\omega_{\max}}C_\phi\}
\]
for every $t\in[0,T]$.  $K_*$ is a compact subset of
$\cP^\circ(G)\times\bbS^{d\times d}$, and thus the maps
$\bm H,\bm B$ and their first derivatives are bounded on $K_*$.
The first two equations in \eqref{mfg} give
$\norm{\dot\phi}_{L^\infty(0,T)}
+\norm{\dot\rho}_{L^\infty(0,T)}\le C_{K_*}$.
With $p=\gradG\phi$, differentiating the first two equations in
\eqref{mfg} with respect to time gives
\begin{align*}
 \ddot\phi
 &=D_\rho\bm H(\rho,p)[\dot\rho]
   +D_p\bm H(\rho,p)[\gradG\dot\phi]-\DeltaG\dot\phi,\\
 \ddot\rho
 &=\divG\bigl(D_\rho\bm B(\rho,p)[\dot\rho]
   +D_p\bm B(\rho,p)[\gradG\dot\phi]\bigr)+\DeltaG\dot\rho.
\end{align*}
The bounded first derivatives on $K_*$ now give uniform bounds for
$\ddot\phi$ and $\ddot\rho$.  
This
proves \eqref{cont-reg}.
\end{proof}

\begin{proof}[Verification of Remark~\ref{potential-monotonicity}]
Fix $(\rho,p)$, put $q=-p$, and let
$(\eta,r)\in\bbR_0^d\times \bbS^{d\times d}$.  From
\eqref{pot-coeff} and the chain rule, we have
\begin{align*}
 D_\rho\bm H(\rho,p)[\eta]
 &=D_{\rho\rho}^2\cH(\rho,q)\eta+D^2\cF(\rho)\eta,\;\;
 D_p\bm H(\rho,p)[r]
 =-D_{\rho q}^2\cH(\rho,q)r,\\
 D_\rho\bm B(\rho,p)[\eta]
 &=-D_{q\rho}^2\cH(\rho,q)\eta,\;\;
 D_p\bm B(\rho,p)[r]
 =D_{qq}^2\cH(\rho,q)r.
\end{align*}
Substituting these identities into the definition of
$\mathcal M_{\rho,p}$ in \eqref{differential-LL} gives
\begin{align*}
 \mathcal M_{\rho,p}(\eta,r)
={}&\ip{D_{\rho\rho}^2\cH(\rho,q)\eta}{\eta}
 +\ip{D^2\cF(\rho)\eta}{\eta}
 -\ip{D_{qq}^2\cH(\rho,q)r}{r}_E\\
 &\quad-\ip{D_{\rho q}^2\cH(\rho,q)r}{\eta}
 +\ip{D_{q\rho}^2\cH(\rho,q)\eta}{r}_E.
\end{align*}
The two mixed terms in the last line cancel, namely, $ 
\ip{D_{\rho q}^2\cH(\rho,q)r}{\eta}
=
\ip{D_{q\rho}^2\cH(\rho,q)\eta}{r}_E,$ 
since $\cH$ is scalar-valued and of class $\mathcal C^2$, so that both
expressions are equal to
$ \frac{\partial^2}{\partial s\,\partial t}
\cH(\rho+s\eta,q+tr)|_{s=t=0}$. 
Hence, by Assumption~\ref{pot-struct},  
\[
 \mathcal M_{\rho,p}(\eta,r)
 =\ip{D_{\rho\rho}^2\cH(\rho,q)\eta}{\eta}
 +\ip{D^2\cF(\rho)\eta}{\eta}
 -\ip{D_{qq}^2\cH(\rho,q)r}{r}_E<0
\]
whenever $(\eta,r)\ne(0,0)$.  Thus \eqref{differential-LL} holds.

Let $\rho,\sigma\in\cP^\circ(G)$.  Since $\cU_T$ is differentiable and
convex,
$
 \cU_T(\rho)
 \ge\cU_T(\sigma)+\ip{D_\rho\cU_T(\sigma)}{\rho-\sigma}.
$
By $g=D_\rho\cU_T$, this is
$
 \ip{g(\sigma)}{\rho-\sigma}
 \le\cU_T(\rho)-\cU_T(\sigma).
$
Similarly, interchanging $\rho$ with $\sigma$ gives
$
 \cU_T(\sigma)
 \ge\cU_T(\rho)+\ip{g(\rho)}{\sigma-\rho},
$ which yields
$\cU_T(\rho)-\cU_T(\sigma)\le\ip{g(\rho)}{\rho-\sigma}$.
Combining these inequalities gives
$\ip{g(\sigma)}{\rho-\sigma}
\le\cU_T(\rho)-\cU_T(\sigma)
\le\ip{g(\rho)}{\rho-\sigma}$, and hence
$\ip{g(\rho)-g(\sigma)}{\rho-\sigma}\ge0$.  This proves
\eqref{terminal-mono}.  Together with \eqref{differential-LL}, this
verifies Assumption~\ref{assume-monotone}.
\end{proof}

\begin{proof}[Verification for Example~\ref{exam}]
We verify Assumptions~\ref{continuous-data} and \ref{pot-struct} for
\eqref{quad-H} and \eqref{quad-data}.
For $r,s>0$,
$\theta(r,s)=\int_0^1r^ts^{1-t}\,\mathrm{d}t$.  If
$f_t(r,s):=r^ts^{1-t}$, then
$D^2f_t(r,s)[(a_0,b_0),(a_0,b_0)]=-t(1-t)f_t(r,s)
\bigl(\frac {a_0}{r}-\frac {b_0}{s}\bigr)^2\le0$.
The integral representation also shows that $\theta$ is smooth on $(0,\infty)^2$.  Hence $\theta$ is jointly concave, and so is
$\rho\mapsto\cH_\theta(\rho,q)$ for each fixed $q$.  Moreover,
$\ip{D_{qq}^2\cH_\theta(\rho,q)r}{r}_E
=\ip{\theta(\rho)r}{r}_E\ge\delta\norm{r}_E^2$ for
$\rho\in\cP_\delta$, because
$\theta(\rho_i,\rho_j)\ge\min\{\rho_i,\rho_j\}\ge\delta$.
Finally, $D^2\cF=-\alpha I$ on $\bbR_0^d$.
Together with the assumed convexity and $\mathcal C^2$ regularity of
$\cU_T$, these facts verify Assumption~\ref{pot-struct}. 

 It remains to
verify Assumption~\ref{continuous-data}.  The coefficients are
$\mathcal C^1$ in the open simplex.  The integral representation of
$\theta$ gives $\partial_1\theta\ge0$, and hence
$\bm H_i(\rho,p)\ge-\alpha\rho_i\ge-\alpha$.  Since
$\cH_\theta$ is one-homogeneous in $\rho$ and $\bm B(\rho,p)=\theta(\rho)p$,
$\ip{D_\rho\cH_\theta(\rho,-p)}{\rho}=\cH_\theta(\rho,p)$ and
$\ip{\bm B(\rho,p)}{p}_E=2\cH_\theta(\rho,p)$.
Therefore
$\ip{\bm B(\rho,p)}{p}_E-\ip{\bm H(\rho,p)}{\rho}
=\cH_\theta(\rho,p)+\alpha\norm{\rho}^2\ge0$.
With
$u=\rho_j/\rho_i$,
\[
 |\bm B_{ij}(\rho,p)|
 \le(\rho_i+\rho_j)\mathfrak a(u,p),\qquad
 \mathfrak a(u,p)=\norm{p}_{\ell^\infty(E)}
 \frac{|u-1|}{(1+u)|\log u|},
 \qquad
 \mathfrak a(1,p)=\frac12\norm{p}_{\ell^\infty(E)}.
\] 
The function $\mathfrak a$ is locally bounded and tends to zero as $u\to0^+$ or
$u\to\infty$, locally uniformly in $p$.  Finally,
$g=D_\rho\cU_T\in\mathcal C^1(\cP(G);\bbR^d)$ is bounded. 
Thus Assumption~\ref{continuous-data} holds with
$C_0=0$, $C_H=\alpha$, and
$
C_g=\sup_{\rho\in\cP(G)}
\norm{D_\rho\cU_T(\rho)}_{\ell^\infty}.$
Proposition~\ref{cont-sol} now gives the asserted unique solution and its estimates.
\end{proof}

\begin{proof}[Verification of Assumption~\ref{variational-data}
for $\cL_\theta$]
For $a>0$, the function $f(a,x)=x^2/a$ is jointly convex and
nonincreasing in $a$.  Since
$(\rho_i,\rho_j)\mapsto\theta(\rho_i,\rho_j)$ is concave, for
$0\le\lambda\le1$ one has
\begin{align*}
 &f\bigl(\theta(\lambda\rho_i+(1-\lambda)\sigma_i,
 \lambda\rho_j+(1-\lambda)\sigma_j),
 \lambda w_{ij}+(1-\lambda)z_{ij}\bigr)\notag\\
 &\quad\le f\bigl(\lambda\theta(\rho_i,\rho_j)
 +(1-\lambda)\theta(\sigma_i,\sigma_j),
 \lambda w_{ij}+(1-\lambda)z_{ij}\bigr)\notag\\
 &\quad\le\lambda f(\theta(\rho_i,\rho_j),w_{ij})
 +(1-\lambda)f(\theta(\sigma_i,\sigma_j),z_{ij}).
\end{align*}
The lower-semicontinuous convention in \eqref{quad-L} preserves this
inequality on the boundary and makes $\cL_\theta$ proper and lower
semicontinuous.  Summing over the edges proves joint convexity.  For each
fixed $\rho$, the resulting quadratic form is strictly convex in $w$ on its
effective domain.

Moreover, $0\le\theta(\rho_i,\rho_j)\le1$ on the simplex, so
$\cL_\theta(\rho,w)\ge\frac14\sum_{(i,j)\in E}w_{ij}^2
=\frac12\norm{w}_E^2$.
This proves both \eqref{coercivity} and \eqref{superlinear}, uniformly in
$\rho$.  Since $\cL_\theta(\rho,0)=0$ for every
$\rho\in\cP^\circ(G)$, \eqref{zero-action} holds.  If $\rho_i=0$, then
$\theta(\rho_i,\rho_j)=0$ for every $j\sim i$; finiteness of
$\cL_\theta(\rho,w)$ therefore forces $w_{ij}=0$.  Thus
\eqref{action-boundary} holds.  Finally, $\cL_\theta$ is smooth in the open
simplex.  Hence $\cL_\theta$ satisfies
Assumption~\ref{variational-data}.
\end{proof}

\begin{proof}[Proof of the quantitative estimate \eqref{strong-LL-ineq}]
Fix a compact set
$K\Subset\cP^\circ(G)\times\bbS^{d\times d}$.
Since the derivatives are linear in their directions,
$\mathcal M_{\xi,z}$ is homogeneous of degree two:
$\mathcal M_{\xi,z}(\lambda\eta,\lambda r)
=\lambda^2\mathcal M_{\xi,z}(\eta,r)$ for $\lambda\in\bbR$.
Since $K$ is compact and lies in the open simplex,
$\delta_K:=\min\{\xi_i:(\xi,z)\in K,\ i\in V\}>0$.
Let $\widehat K$ be the convex hull of $K$.  Every density component of
every point in $\widehat K$ is at least $\delta_K$.  Hence
$\widehat K$ is a compact subset of
$\cP^\circ(G)\times\bbS^{d\times d}$.  Set
$\mathcal S_1=\{(\eta,r)\in\bbR_0^d\times\bbS^{d\times d}:
\norm{\eta}^2+\norm{r}_E^2=1\}$.
The function $\mathcal M_{\xi,z}(\eta,r)$ is continuous and, by
\eqref{differential-LL}, strictly negative on the compact set
$\widehat K\times\mathcal S_1$.  Hence
$
 c_K:=-\max_{\substack{(\xi,z)\in\widehat K\\
                       (\eta,r)\in\mathcal S_1}}
 \mathcal M_{\xi,z}(\eta,r)>0.
$
For a nonzero $(\eta,r)$, put
$a=(\norm{\eta}^2+\norm{r}_E^2)^{1/2}$.  The homogeneity of
$\mathcal M_{\xi,z}$ then gives
\[
\begin{aligned}
 \mathcal M_{\xi,z}(\eta,r)
 &=a^2\mathcal M_{\xi,z}\bigl(\eta/a,r/a\bigr)
 \le-c_K\bigl(\norm{\eta}^2+\norm{r}_E^2\bigr),
 \qquad (\xi,z)\in\widehat K.
\end{aligned}
\]
The same inequality is immediate when $(\eta,r)=(0,0)$.

For $(\rho,p),(\sigma,q)\in K$, set
$\eta=\rho-\sigma$ and $r=p-q$.  Define the line segment
\[
 (\rho_s,p_s)=\bigl((1-s)\sigma+s\rho,(1-s)q+sp\bigr)
 =(\sigma+s\eta,q+sr),\qquad 0\le s\le1.
\]
Then $(\rho_s,p_s)\in\widehat K$, $(\rho_0,p_0)=(\sigma,q)$,
$(\rho_1,p_1)=(\rho,p)$, and
$\dot\rho_s=\eta$, $\dot p_s=r$.  Let
$\Psi(s)=\ip{\bm H(\rho_s,p_s)}{\eta}
-\ip{\bm B(\rho_s,p_s)}{r}_E$.
The chain rule yields
\[
\begin{aligned}
 \Psi'(s)
 ={}&\ip{D_\rho\bm H(\rho_s,p_s)[\eta]
          +D_p\bm H(\rho_s,p_s)[r]}{\eta}\\
 &-\ip{D_\rho\bm B(\rho_s,p_s)[\eta]
          +D_p\bm B(\rho_s,p_s)[r]}{r}_E
 =\mathcal M_{\rho_s,p_s}(\eta,r).
\end{aligned}
\]
Therefore, the fundamental theorem of calculus and the uniform bound above
give
\[
\begin{aligned}
 &\ip{\bm H(\rho,p)-\bm H(\sigma,q)}{\rho-\sigma}
 -\ip{\bm B(\rho,p)-\bm B(\sigma,q)}{p-q}_E\\
 &=\Psi(1)-\Psi(0)
 =\int_0^1\mathcal M_{\rho_s,p_s}(\eta,r)\,\mathrm{d}s\\
 &\le-c_K\bigl(\norm{\eta}^2+\norm{r}_E^2\bigr)
 =-c_K\bigl(\norm{\rho-\sigma}^2+\norm{p-q}_E^2\bigr).
\end{aligned}
\]
This proves \eqref{strong-LL-ineq}.
\end{proof}

\bibliographystyle{plain}
\bibliography{references_V4.bib}

\end{document}